\numberwithin{equation}{section}
\newcommand{\R}{\mathbb{R}}
\newcommand{\N}{\mathbb{N}}
\newcommand{\E}{\mathbb{E}}
\newcommand{\pr}{\mathbb{P}}
\newcommand{\e}{\operatorname{e}}
\newcommand{\dd}{\,{\mathrm d}}
\newcommand{\db}{{\mathrm d}}
\newcommand{\im}{\operatorname{i}}
\newcommand{\ind}{\mathbbm{1}}
\newcommand{\lo}{\mathcal{L}}
\newcommand{\eps}{\varepsilon}
\newcommand{\pt}{\partial}
\newcommand{\std}{\,\partial}
\newcommand{\stb}{\partial}
\newcommand{\scale}{\operatorname{\delta}}
\newcommand{\scalemat}{\operatorname{\delta}}
\newcommand{\mineig}{\lambda_{\rm min}}
\newcommand{\fil}{\mathcal{C}}
\newcommand{\bracol}{\mathcal{A}}
\newcommand{\ord}{\mathcal{O}}
\newtheorem{lemma}{Lemma}[section]
\newtheorem{propn}[lemma]{Proposition}
\newtheorem{thm}[lemma]{Theorem}
\newtheorem{cor}[lemma]{Corollary}
\newtheorem{defn}[lemma]{Definition}
\newtheorem{remark0}[lemma]{Remark}
\newtheorem{eg0}[lemma]{Example}
\newenvironment{remark}{\begin{remark0}\rm}{\hspace*{\fill} $\square$
                        \end{remark0}}
\author[K. Habermann]{Karen Habermann}
\address{Statslab, Centre for Mathematical Sciences,
Wilberforce Road,
Cambridge,
CB3 0WB,
United Kingdom.}
\email{K.Habermann@maths.cam.ac.uk}
\thanks{Statistical Laboratory, University of Cambridge. Research
  supported by EPSRC grant EP/H023348/1 for the Cambridge Centre for
  Analysis}
\subjclass[2010]{58J65, 60H07, 35H10}
\title[Small-time fluctuations for sub-Riemannian diffusion loops]
{Small-time fluctuations for sub-Riemannian diffusion loops}
\begin{document}
\begin{abstract}
  We study the small-time fluctuations for diffusion processes which
  are conditioned by their initial and final positions, under the
  assumptions that the diffusivity has a sub-Riemannian structure and
  that the drift vector field lies in the span of the
  sub-Riemannian structure.
  In the case where the
  endpoints agree and the generator of the diffusion process
  is non-elliptic at that point, the
  deterministic Malliavin covariance matrix is always degenerate.
  We identify, after a suitable rescaling, another limiting
  Malliavin covariance matrix which is non-degenerate, and we show
  that, with the same scaling, the diffusion
  Malliavin covariance matrices are uniformly non-degenerate.
  We further show that the suitably rescaled fluctuations of the
  diffusion loop converge to a
  limiting diffusion loop, which is equal in law to
  the loop we obtain by
  taking the limiting process of the unconditioned rescaled
  diffusion processes and condition it to return to its starting point.
  The generator of the unconditioned limiting rescaled
  diffusion process can be described in terms of the
  original generator.
\end{abstract}
\maketitle
\thispagestyle{empty}

\section{Introduction}
The small-time asymptotics of heat kernels have been
extensively studied over the years, from an analytic, a geometric
as well as a probabilistic point of view.
Bismut \cite{bismut} used Malliavin calculus to perform the analysis
of the heat kernel
in the elliptic case and he developed a deterministic Malliavin
calculus to study
H\"ormander-type hypoelliptic heat
kernels. Following this approach, Ben Arous~\cite{GBAcut} found the
corresponding small-time asymptotics outside the sub-Riemannian cut
locus and Ben Arous \cite{GBAdiag} and L\'eandre \cite{leandre}
studied the behaviour on the diagonal. In joint
work~\cite{expdecayI},~\cite{expdecayII},
they also discussed the exponential decay of
hypoelliptic heat kernels on the diagonal.

In recent years, there has been further progress in the study of
heat kernels on sub-Riemannian manifolds. Barilari, Boscain and Neel
\cite{boscain} found estimates of the heat kernel on the cut locus by
using an analytic approach, and Inahama and Taniguchi
\cite{yuzuru} combined
Malliavin calculus and rough path theory to determine small-time full
asymptotic expansions on the off-diagonal cut locus. Moreover,
Bailleul, Mesnager and Norris \cite{BMN} studied the asymptotics of
sub-Riemannian diffusion bridges outside the cut
locus. We extend their analysis to the diagonal and
describe the asymptotics of sub-Riemannian diffusion loops.
In a suitable chart, and after
a suitable rescaling, we show that the small-time diffusion loop measures 
have a non-degenerate limit, which we identify explicitly in terms of a certain
local limiting operator. Our analysis also allows us to determine the 
loop asymptotics under the scaling used to obtain a small-time
Gaussian limit of the sub-Riemannian diffusion bridge measures
in \cite{BMN}. In general, these asymptotics
are now degenerate and need no longer be Gaussian.

Let $M$ be a connected smooth manifold of dimension $d$ and
let $a$ be a smooth non-negative quadratic form on the cotangent
bundle $T^*M$. Let $\lo$ be a second order differential operator on
$M$ with smooth coefficients, such that $\lo 1=0$
and such that $\lo$ has principal symbol $a/2$. One refers to $a$ as the
diffusivity of the operator $\lo$. We say that $a$ has a
sub-Riemannian structure if there exist $m\in\N$ and smooth vector
fields $X_1,\dots,X_m$ on $M$ satisfying the strong H\"{o}rmander
condition,
i.e. the vector fields together with their commutator brackets of all
orders span $T_yM$ for all $y\in M$,
such that
\begin{equation*}
  a(\xi,\xi)=\sum_{i=1}^m\langle\xi, X_i(y)\rangle^2
  \quad\mbox{for}\quad \xi\in T_y^* M\;.
\end{equation*}
Thus, we can write
\begin{equation*}
  \lo=\frac{1}{2}\sum_{i=1}^m X_i^2+X_0
\end{equation*}
for a vector field $X_0$ on $M$, which we also assume to be smooth.
Note that the vector fields $X_0,X_1,\dots,X_m$ are allowed to vanish
and hence, the sub-Riemannian structure $(X_1,\dots,X_m)$ need not be
of constant rank.
To begin with, we impose the global condition
\begin{equation}\label{globalcond}
  M=\R^d\quad\mbox{and}\quad X_0,X_1,\dots,X_m\in C_b^\infty(\R^d,\R^d)\;,
\end{equation}
subject to the additional constraint
$X_0(y)\in\operatorname{span}\{X_1(y),\dots,X_m(y)\}$
for all $y\in\R^d$. Subsequently, we follow Bailleul, Mesnager and
Norris \cite{BMN} and insist that there exist a
smooth one-form $\beta$ on $M$ with
$\|a(\beta,\beta)\|_\infty<\infty$,
and a locally invariant positive smooth measure $\tilde\nu$ on $M$ such that,
for all $f\in C^\infty(M)$,
\begin{equation*}
  \lo f =\frac{1}{2}\operatorname{div}(a\nabla f) + a(\beta,\nabla f)\;.
\end{equation*}
Here the divergence is understood with respect to
$\tilde\nu$. Note that if
the operator $\lo$ is of this form then
$X_0=\sum_{i=1}^m\alpha_i X_i$ with
$\alpha_i=\frac{1}{2}\operatorname{div} X_i+\beta(X_i)$
and in particular, $X_0(y)\in\operatorname{span}\{X_1(y),\dots,X_m(y)\}$
for all $y\in M$.

We are interested in the
associated diffusion bridge measures.
Fix $x\in M$ and let $\eps>0$. If we do not assume the global condition then
the diffusion process $(x_t^\eps)_{t<\zeta}$
defined up to the explosion time~$\zeta$
starting from
$x$ and having generator $\eps\lo$ may explode with positive
probability before time~$1$. Though,
on the event $\{\zeta>1\}$, the process
$(x_t^\eps)_{t\in[0,1]}$ has a unique sub-probability law
$\mu_\eps^x$ on  
the set of continuous paths $\Omega=C([0,1],M)$.
Choose a positive smooth measure $\nu$ on $M$,
which can differ from the
locally invariant positive measure $\tilde\nu$ on $M$, and let $p$
denote the Dirichlet heat kernel for $\lo$ with respect to $\nu$.
We can disintegrate $\mu_\eps^x$ to give a unique family of probability
measures $(\mu_\eps^{x,y}\colon y\in M)$ on $\Omega$ such that
\begin{equation*}
  \mu_\eps^x(\db\omega)=\int_M\mu_\eps^{x,y}(\db\omega)
  p(\eps,x,y)\nu(\db y)\;,
\end{equation*}
with $\mu_\eps^{x,y}$ supported on
$\Omega^{x,y}=\{\omega\in\Omega\colon\omega_0=x,\omega_1=y\}$ for all
$y\in M$ and where the map $y\mapsto\mu_\eps^{x,y}$ is weakly continuous.
Bailleul, Mesnager and Norris \cite{BMN} studied the
small-time fluctuations of the
diffusion bridge measures $\mu_\eps^{x,y}$ in the limit $\eps\to 0$
under the assumption that $(x,y)$ lies outside the sub-Riemannian cut
locus. Due to the latter condition, their results do not cover the
diagonal case unless $\lo$ is elliptic at $x$.
We show how to extend their analysis in order
to understand the small-time fluctuations of the
diffusion loop measures $\mu_\eps^{x,x}$.

As a by-product, we recover
the small-time heat kernel asymptotics
on the diagonal shown by Ben Arous \cite{GBAdiag} and
L\'eandre \cite{leandre}.
Even though our approach for obtaining the small-time asymptotics on the
diagonal is similar to \cite{GBAdiag}, it does not rely on the Rothschild
and Stein lifting theorem, cf.~\cite{stein}. Instead, we use the
notion of an adapted chart at $x$, introduced by Bianchini and Stefani
\cite{bianchini}, which provides suitable coordinates around $x$.
We discuss adapted charts in detail in Section~\ref{sec:graded}.
The
chart Ben Arous \cite{GBAdiag} performed his analysis in is in fact
one specific
example of an adapted chart, whereas we allow for any adapted
chart. In the case where the diffusivity $a$ has a sub-Riemannian
structure which is one-step bracket-generating at $x$,
any chart around
$x$ is adapted. However, in general these charts are more complex and for
instance, even if $M=\R^d$
there is no reason to assume that the identity
map
is adapted. Paoli~\cite{paoli} successfully used adapted
charts to describe the small-time asymptotics of H\"ormander-type
hypoelliptic operators with non-vanishing
drift at a stationary point of the drift field.

To a sub-Riemannian structure $(X_1,\dots,X_m)$ on $M$, we associate a
linear scaling map $\scale_\eps\colon\R^d\to\R^d$ in a suitable set of
coordinates, which depends on the number of brackets needed to achieve
each direction,
and the so-called nilpotent approximations
$\tilde X_1,\dots,\tilde X_m$, which are homogeneous vector fields on
$\R^d$. For the details see Section~\ref{sec:graded}. The map
$\scale_\eps$ allows us to rescale the fluctuations of the diffusion
loop to high enough orders so
as to obtain a non-degenerate limiting measure, and the nilpotent
approximations are used to describe this limiting measure.
Let $(U,\theta)$ be an adapted chart around $x\in M$.
Smoothly extending this chart to all of $M$ yields a
smooth map $\theta\colon M\to\R^d$ whose derivative
$\db\theta_x\colon T_xM\to\R^d$ at $x$ is invertible.
Write $T\Omega^{0,0}$  for the set of continuous paths
$v=(v_t)_{t\in[0,1]}$ in $T_xM$ with $v_0=v_1=0$. Define
a rescaling map
$\sigma_\eps\colon\Omega^{x,x}\to T\Omega^{0,0}$ by
\begin{equation*}
  \sigma_\eps(\omega)_t=(\db\theta_x)^{-1}
  \left(\scale_\eps^{-1}\left(\theta(\omega_t)-\theta(x)\right)\right)
\end{equation*}
and let $\tilde\mu_\eps^{x,x}$ be the pushforward measure of
$\mu_\eps^{x,x}$ by $\sigma_\eps$, i.e. $\tilde\mu_\eps^{x,x}$ is the
unique probability measure on $T\Omega^{0,0}$ given by
\begin{equation*}
  \tilde\mu_\eps^{x,x}=\mu_\eps^{x,x}\circ\sigma_\eps^{-1}\;.
\end{equation*}
Our main result concerns the weak convergence of these rescaled
diffusion loop measures $\tilde\mu_\eps^{x,x}$.
To describe the limit,
assuming that $\theta(x)=0$, we consider the
diffusion process $(\tilde x_t)_{t\geq 0}$ in $\R^d$
starting from $0$ and having generator
\begin{equation*}
  \tilde\lo=\frac{1}{2}\sum_{i=1}^m\tilde X_i^2\;.
\end{equation*}
A nice cascade structure of the nilpotent
approximations $\tilde X_1,\dots,\tilde X_m$
ensures that this process
exists for all time.
Let $\tilde\mu^{0,\R^d}$ denote the law of the
diffusion
process $(\tilde x_t)_{t\in[0,1]}$ on the set of
continuous paths $\Omega(\R^d)=C([0,1],\R^d)$. By disintegrating
$\tilde\mu^{0,\R^d}$, we obtain the loop measure $\tilde\mu^{0,0,\R^d}$
supported on the set
$\Omega(\R^d)^{0,0}=
\{\omega\in\Omega(\R^d)\colon \omega_0=\omega_1=0\}$.
Define a map
$\rho\colon\Omega(\R^d)^{0,0}\to T\Omega^{0,0}$ by
\begin{equation*}
  \rho(\omega)_t=(\db\theta_x)^{-1}\omega_t
\end{equation*}
and set $\tilde\mu^{x,x}=\tilde\mu^{0,0,\R^d}\circ\rho^{-1}$. This is
the desired limiting probability measure on $T\Omega^{0,0}$.
\begin{thm}[Convergence of the rescaled diffusion bridge measures]
  \label{thm:convofbridges}
  Let $M$ be a connected smooth manifold and fix $x\in M$.
  Let $\lo$ be a second order
  partial differential operator on $M$ such that,
  for all $f\in C^\infty(M)$,
  \begin{equation*}
    \lo f=\frac{1}{2}\operatorname{div}(a\nabla f)+a(\beta,\nabla f)\;,
  \end{equation*}
  with respect to a locally invariant positive smooth measure,
  and where the smooth non-negative quadratic form $a$ on $T^*M$ has a
  sub-Riemannian structure and the smooth one-form $\beta$ on $M$
  satisfies $\|a(\beta,\beta)\|_\infty<\infty$.
  Then the rescaled diffusion loop measures
  $\tilde\mu_\eps^{x,x}$ converge weakly to the probability measure
  $\tilde\mu^{x,x}$ on $T\Omega^{0,0}$ as $\eps\to 0$.
\end{thm}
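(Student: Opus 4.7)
The plan is to split the proof into two parts: first establish weak convergence of the unconditioned rescaled measures, then upgrade this to weak convergence of the loop measures by a conditioning argument using Malliavin calculus, in the spirit of Bailleul, Mesnager and Norris \cite{BMN}. I would work in the adapted chart $(U,\theta)$ around $x$, assuming $\theta(x)=0$, and set $y_t^\eps=\scale_\eps^{-1}\theta(x_t^\eps)$. The key algebraic observation is that, by the very definition of the nilpotent approximations and the scaling $\scale_\eps$, the Stratonovich SDE satisfied by $y^\eps$ has coefficients converging locally uniformly to those of the SDE with generator $\tilde\lo$, the lower-order remainders being killed by the factors of $\scale_\eps^{-1}$ and of $\sqrt{\eps}$ in front of the Brownian increments. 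Since the cascade structure of the $\tilde X_i$ ensures that the limiting SDE is non-explosive, a standard tightness plus identification-of-limit argument yields weak convergence of the unconditioned laws of $(y_t^\eps)_{t\in[0,1]}$ to that of $(\tilde x_t)_{t\in[0,1]}$.

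\textbf{Conditioning step.} To upgrade this to a statement about loops, I would use the Malliavin-calculus formulation of conditioning. The uniform non-degeneracy of the rescaled Malliavin covariance matrix of $y_1^\eps$, established in the earlier sections of the paper, gives in particular a smooth density $\tilde p_\eps(1,0,\cdot)$ for $y_1^\eps$ on $\R^d$ and an integration-by-parts formula that expresses the conditional expectation of any bounded continuous cylinder functional $F$ of $(y_t^\eps)_{t\in[0,1]}$ given $y_1^\eps=z$ as the expectation of $F$ against an explicit Skorohod-integral weight, divided by $\tilde p_\eps(1,0,z)$. Evaluating at $z=0$ and combining with the weak convergence from the first step, one is led to compare these expressions with their analogues for $(\tilde x_t)_{t\in[0,1]}$, which by construction produce $\tilde\mu^{0,0,\R^d}$. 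Pushing forward through $(\db\theta_x)^{-1}$ then identifies the limit as $\tilde\mu^{x,x}$, and a standard monotone-class extension from cylinder functionals to bounded continuous functionals on $T\Omega^{0,0}$ completes the argument.

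\textbf{Main obstacle.} The technical heart of the argument is obtaining $\eps$-uniform $L^p$ bounds on the Malliavin weights in the conditioning formula, together with continuity in $\eps$ of the density $\tilde p_\eps(1,0,\cdot)$ at the origin. The weights require $\eps$-uniform $L^p$ control of the inverse of the rescaled Malliavin covariance matrix and of all iterated Malliavin derivatives of $y_1^\eps$; the former is precisely the uniform non-degeneracy result already available, while the latter reduces to uniform bounds on the rescaled SDE coefficients and their derivatives, which can be read off from the homogeneity of the $\tilde X_i$ in the adapted chart. The convergence $\tilde p_\eps(1,0,0)\to\tilde p(1,0,0)$ is a by-product of these estimates and recovers the diagonal heat-kernel asymptotics of Ben Arous \cite{GBAdiag} and L\'eandre \cite{leandre}. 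Finally, in the absence of the global condition, one must show by a sub-Riemannian large-deviation estimate that loops leaving the adapted chart $U$ contribute negligibly to $\tilde\mu_\eps^{x,x}$ as $\eps\to 0$, so that the entire analysis can indeed be carried out in the chart.
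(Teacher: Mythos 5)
Your overall strategy mirrors the paper's: prove the global ($\R^d$) version first via Malliavin calculus in an adapted chart, then localise. Your conditioning step is formulated via Skorohod-integral weights and a direct disintegration of $\E[F \mid y_1^\eps = z]$, whereas the paper instead works with Fourier transforms $\hat G_\eps(\xi)=\E[G(\tilde x^\eps)\exp\{\im\langle\xi,\tilde x_1^\eps\rangle\}]$, proves a uniform $(1+|\xi|^{d+1})^{-1}$ decay via Bismut's integration by parts, and recovers $G_\eps(0)$ by Fourier inversion. These are close relatives and both rely on the same two inputs (uniform non-degeneracy of $\tilde c_1^\eps$ and $\eps$-uniform moment bounds), so the difference is cosmetic.

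There is, however, a genuine gap in how you propose to obtain the $\eps$-uniform $L^p$ bounds on the Malliavin weights. You claim this \emph{``reduces to uniform bounds on the rescaled SDE coefficients and their derivatives, which can be read off from the homogeneity of the $\tilde X_i$.''} This is false as stated: the rescaled coefficients $\sqrt{\eps}(\delta_\eps^{-1})_*X_i$ are not uniformly bounded on $\R^d$, and neither are their limits $\tilde X_i$, which are genuine polynomials (for instance $\tilde X_2 = -(y^1)^2\,\partial/\partial y^2$ in the paper's example). So a direct Gr\"onwall-type argument on the rescaled SDE does not close without a priori control of $\tilde x_t^\eps$ itself, and one is led into a circularity. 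The resolution in the paper (Lemma~\ref{lem:tildeuvest}) is more subtle: it introduces an auxiliary family of processes $x_t^\eps(\theta)$ driven by $\theta\sqrt\eps X_i$, uses the adapted-chart vanishing conditions (\ref{cond2sec3}) to show that $\langle u, x_t^{\eps,(n)}(0)\rangle = 0$ for $u\in C_n(0)^\perp$, and then writes each graded component of $\tilde x_t^\eps$ as an $n$-fold Duhamel integral over $\theta$ of the $n$-th $\theta$-derivative, whose SDE has coefficients bounded uniformly in $\eps$ and $\theta$ within a graded Lipschitz structure. An analogous argument is needed for $\tilde v_t^\eps$, where the off-diagonal entries $\eps^{-(n_1-n_2)/2}\langle u^1, v_t^\eps u^2\rangle$ with $n_1>n_2$ require the same cancellation. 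None of this follows from homogeneity of $\tilde X_i$ alone, and omitting it leaves the central uniform estimate unproved.

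Your treatment of the localisation step is also thinner than what is actually required. It is correct that a sub-Riemannian large-deviation bound (the paper invokes Theorem~6.1 of \cite{BMN}, giving $\limsup_{\eps\to 0}\eps\log p(\eps,x,M\setminus U,x)\leq -d(x,M\setminus U,x)^2/2$) shows excursions from $U$ are negligible, but this only helps because the on-diagonal kernel decays merely polynomially, $p(\eps,x,x)\sim\eps^{-Q/2}\bar q(0)$, so the ratio vanishes; you should make that comparison explicit. More importantly, you still need to embed the restricted vector fields into a global system on $\R^d$ satisfying the hypotheses of Theorem~\ref{propn:convofbridges} before that theorem can be applied; the paper does this by cutting off $\theta_*X_i$ with a bump function $\chi$ supported in $V_0$, and adding $d$ extra fields $\bar X_{m+k}=\rho\, e_k$ to restore the global strong H\"ormander condition. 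This construction, and the verification that the identity is an adapted chart for the globalised system with the same nilpotent approximations $\tilde X_i$, is not automatic and should be included.
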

We prove this result by localising
Theorem~\ref{propn:convofbridges}.
As a consequence of the localisation argument,
Theorem~\ref{thm:convofbridges} remains
true under the weaker assumption that the smooth vector fields
giving the sub-Riemannian structure are only locally defined.
The theorem below
imposes an additional constraint on the map $\theta$ which ensures
that we can rely on the tools of Malliavin calculus to prove it.
As we
see later, the existence of such a diffeomorphism
$\theta$ is always guaranteed.
\begin{thm}
  \label{propn:convofbridges}
  Fix $x\in\R^d$. Let $X_0,X_1,\dots,X_m$ be smooth bounded
  vector fields on
  $\R^d$, with bounded derivatives of all orders, which satisfy the
  strong H\"ormander condition everywhere
  and suppose that $X_0(y)\in\operatorname{span}
  \{X_1(y),\dots,X_m(y)\}$ for all
  $y\in\R^d$. Set
  \begin{equation*}
    \lo=\frac{1}{2}\sum_{i=1}^m X_i^2+X_0\;.
  \end{equation*}
  Assume that the smooth map $\theta\colon\R^d\to\R^d$ is a global
  diffeomorphism with bounded derivatives of all positive
  orders and an adapted chart at $x$.
  Then the rescaled diffusion loop measures
  $\tilde\mu_\eps^{x,x}$ converge weakly to the probability measure
  $\tilde\mu^{x,x}$ on $T\Omega^{0,0}$ as $\eps\to 0$.
\end{thm}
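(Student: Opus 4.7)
The plan is to transport everything into the adapted chart and then combine (i) SDE stability for the unconditioned rescaled processes with (ii) a Malliavin-calculus treatment of the conditioning on the endpoint, following the framework of Bailleul, Mesnager and Norris~\cite{BMN} but adapted to the diagonal case by means of the rescaling $\scale_\eps$. Since $\theta$ is a global diffeomorphism, I would push $X_0,X_1,\ldots,X_m$ forward through $\theta$ and assume without loss of generality that $\theta$ is the identity and $x=0$. The diffusion then solves the Stratonovich SDE
\begin{equation*}
 \db x_t^\eps = \sqrt{\eps}\sum_{i=1}^m X_i(x_t^\eps)\circ\db B_t^i + \eps X_0(x_t^\eps)\db t,\qquad x_0^\eps=0,
\end{equation*}
and the rescaled process $\tilde x_t^\eps=\scale_\eps^{-1}(x_t^\eps)$ satisfies an SDE whose coefficients $X_i^\eps$ are $\eps$-dependent perturbations of the nilpotent approximations $\tilde X_i$. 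The structure of an adapted chart from Section~\ref{sec:graded} is engineered precisely so that $X_i^\eps\to\tilde X_i$ locally uniformly, with all derivatives, as $\eps\to 0$, and so that the rescaled drift $\eps\,\scale_\eps^{-1}X_0$ is likewise dominated by the rescaled diffusion part.

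I would then establish convergence of the \emph{unconditioned} rescaled laws $\tilde\mu_\eps^{0,\R^d}\to\tilde\mu^{0,\R^d}$ on $\Omega(\R^d)$ by a Stroock--Varadhan-type SDE stability argument, exploiting the cascade structure of the $\tilde X_i$ to obtain uniform $L^p$ moment bounds on $\tilde x_t^\eps$ together with its Jacobian and Malliavin derivatives. Tightness of $(\tilde\mu_\eps^{x,x})_{\eps>0}$ on $T\Omega^{0,0}$ will then follow once the conditioning has been controlled.

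The main step is passing to the limit in the conditional laws. Following~\cite{BMN}, I would use Malliavin integration by parts to represent, for any bounded continuous $F$ on $T\Omega^{0,0}$,
\begin{equation*}
 \int F\,\db\tilde\mu_\eps^{x,x} \;=\; \frac{\E\bigl[F(\sigma_\eps(x^\eps))\,\delta_0(\tilde x_1^\eps)\bigr]}{(\det\scale_\eps)\,p(\eps,0,0)}\,,
\end{equation*}
where the numerator is understood in the Malliavin sense as a limit of smoothed versions and is rewritten, by integrating by parts against the Malliavin derivative of $\tilde x_1^\eps$, as the expectation of $F(\sigma_\eps(x^\eps))$ against an explicit Malliavin weight. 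The crucial input --- and the reason the whole strategy works on the diagonal --- is the uniform non-degeneracy of the rescaled Malliavin covariance matrix $\tilde C_\eps$ of $\tilde x_1^\eps$, together with $L^p$ bounds on $\tilde C_\eps^{-1}$ uniform in $\eps$, both of which the paper has already announced as main results. Combined with the convergence $X_i^\eps\to\tilde X_i$ and with the correctly rescaled on-diagonal heat-kernel asymptotics of Ben~Arous--L\'eandre for $(\det\scale_\eps)\,p(\eps,0,0)$ (which the same method recovers and which supplies the normalisation), dominated convergence yields $\int F\,\db\tilde\mu_\eps^{x,x}\to\int F\,\db\tilde\mu^{x,x}$ as $\eps\to 0$.

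The principal obstacle is making every object in this Malliavin representation converge and be uniformly controlled in $\eps$: one needs $L^p$ bounds uniform in $\eps$ on $\tilde C_\eps^{-1}$, on the rescaled first and higher Malliavin derivatives of $\tilde x^\eps$, and on the rescaled Jacobian flow, and one must verify convergence in every $L^p$ of the Malliavin weights produced by the integration by parts. The graded structure of the adapted chart is what allows $\scale_\eps$ to interact cleanly with the Malliavin calculus of the SDE and thus makes these estimates accessible; carrying them out carefully is the technical heart of the proof. Once they are in place, tightness on $T\Omega^{0,0}$ and convergence of finite-dimensional distributions give the asserted weak convergence.
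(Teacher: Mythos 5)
Your proposal is correct and essentially matches the paper's proof: both reduce to $\theta=\mathrm{id}$, $x=0$, use a Bismut--Watanabe Malliavin integration by parts against the Dirac mass at $\tilde x_1^\eps=0$, and pass to the limit via the uniform non-degeneracy of $\tilde c_1^\eps$ (Theorem~\ref{thm:UND}) together with uniform moment bounds (Lemma~\ref{lem:tildeuvest}), normalising by the on-diagonal heat kernel asymptotics $\eps^{Q/2}p(\eps,0,0)\to\bar q(0)>0$. The only presentational difference is that the paper runs the argument on Fourier transforms of smooth cylindrical functions, with the uniform decay $|\hat G_\eps(\xi)|\lesssim(1+|\xi|^{d+1})^{-1}$ of Lemma~\ref{lem:controloverinverse} supplying both the convergence of finite-dimensional distributions (via Fourier inversion at $y=0$) and, through the refined estimate for $G(v)=|v_{t_1}-v_{t_2}|^4$, the tightness step that your sketch leaves implicit.
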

Note that the limiting measures with respect to two different
choices of admissible diffeomorphisms $\theta_1$ and $\theta_2$
are related by the Jacobian matrix of the transition map
$\theta_2\circ\theta_1^{-1}$.

The proof of Theorem~\ref{propn:convofbridges} follows
\cite{BMN}. The additional technical result needed in our analysis
is the uniform non-degeneracy of the $\scale_\eps$-rescaled Malliavin
covariance matrices. Throughout the paper, we consider
Malliavin covariance matrices in the sense of Bismut and refer to what
is also called the reduced Malliavin covariance matrix
simply as the Malliavin covariance matrix.
Under the global assumption, there exists a
unique diffusion process  $(x_t^\eps)_{t\in[0,1]}$ starting at $x$ and
having generator~$\eps\lo$. Choose $\theta\colon\R^d\to\R^d$ as in
Theorem~\ref{propn:convofbridges} and define
$(\tilde x_t^\eps)_{t\in[0,1]}$
to be the rescaled diffusion process given by
\begin{equation*}
  \tilde x_t^\eps=
  \scale_\eps^{-1}\left(\theta(x_t^\eps)-\theta(x)\right)\;.
\end{equation*}
Denote the Malliavin covariance matrix of $\tilde x_1^\eps$ by
$\tilde c_1^\eps$. We know that, for each $\eps>0$, the matrix
$\tilde c_1^\eps$ is non-degenerate because the vector fields
$X_1,\dots,X_m$ satisfy the strong H\"ormander condition everywhere.
We prove that
these Malliavin covariance matrices
are in fact uniformly non-degenerate.
\begin{thm}
[Uniform non-degeneracy of the
$\scale_\eps$-rescaled Malliavin covariance matrices]
  \label{thm:UND}
  Let $X_0,X_1,\dots,X_m$ be smooth bounded vector fields on $\R^d$, with
  bounded derivatives of all orders, which satisfy the
  strong H\"ormander condition everywhere and such that
  $X_0(y)\in\operatorname{span}\{X_1(y),\dots,X_m(y)\}$ for all
  $y\in\R^d$. Fix $x\in\R^d$ and consider the diffusion operator
  \begin{equation*}
    \lo=\frac{1}{2}\sum_{i=1}^m X_i^2+X_0\;.
  \end{equation*}
  Then the rescaled Malliavin
  covariance matrices $\tilde c_1^\eps$ are uniformly non-degenerate,
  i.e. for all $p<\infty$, we have
  \begin{equation*}
    \sup_{\eps\in(0,1]}\E\left[\left|\det\left(\tilde c_1^\eps\right)^{-1}
      \right|^p\right]<\infty\;.
  \end{equation*}
\end{thm}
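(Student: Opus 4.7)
The plan is to reduce the negative-moment bound on $\det(\tilde c_1^\eps)$ to a uniform lower-tail estimate for the smallest eigenvalue $\mineig(\tilde c_1^\eps)$, using $\det(\tilde c_1^\eps)^{-1}\leq\mineig(\tilde c_1^\eps)^{-d}$. Writing $x_t^\eps$ as the Stratonovich solution of
\[
\db x_t^\eps=\sqrt{\eps}\sum_{i=1}^m X_i(x_t^\eps)\circ\db B_t^i+\eps X_0(x_t^\eps)\,\db t,
\]
and differentiating $\tilde x_1^\eps=\scale_\eps^{-1}(\theta(x_1^\eps)-\theta(x))$ through the noise, a direct calculation gives
\[
\tilde c_1^\eps=\eps\sum_{i=1}^m\int_0^1 V_s^{\eps,i}(V_s^{\eps,i})^*\,\db s,\qquad V_s^{\eps,i}=\scale_\eps^{-1}\md\theta_{x_1^\eps}\,J_{s,1}^\eps\,X_i(x_s^\eps),
\]
where $J_{s,1}^\eps$ is the flow Jacobian from time $s$ to time $1$. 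It is therefore enough to show $\E[(\xi^*\tilde c_1^\eps\xi)^{-p}]\leq C_p$ uniformly in $\eps\in(0,1]$ and unit vectors $\xi\in\R^d$; a routine $\eps$-net covering of the unit sphere then upgrades this to the required bound on $\E[\mineig(\tilde c_1^\eps)^{-p}]$.

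To exploit the H\"ormander condition uniformly in $\eps$, I pass to the rescaled coordinate $y=\scale_\eps^{-1}(\theta(\cdot)-\theta(x))$ and consider the pushforward vector fields $X_i^\eps$ of $X_i$ under this map. The adapted-chart formalism from Section~\ref{sec:graded} yields expansions $X_i^\eps=\tilde X_i+\sqrt{\eps}\,R_i^\eps$, in which $R_i^\eps$ and all its derivatives are bounded on compact sets uniformly in $\eps\in(0,1]$; the same structure holds for every iterated Lie bracket, with leading term the corresponding bracket of the nilpotent approximations $\tilde X_1,\ldots,\tilde X_m$. Since the $\tilde X_i$ satisfy the strong H\"ormander condition at $0$, a finite collection of their brackets forms a basis of $\R^d$ at $0$; by continuity and the uniform remainder bounds, the same family spans $\R^d$ on a fixed neighbourhood $U$ of $0$ for all sufficiently small $\eps$, with a uniform quantitative lower bound on the resulting Gram determinant. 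Together with standard exit-time estimates for $\tilde x_t^\eps$, which is tight on $C([0,1],\R^d)$ uniformly in $\eps$ by the cascade structure of $\tilde X_i$ already used to define $\tilde\mu^{0,0,\R^d}$, this confines $(\tilde x_t^\eps)_{t\in[0,1]}$ to $U$ with overwhelming probability.

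The final step is an iterated application of Norris's lemma, following the Kusuoka--Stroock scheme used by Bailleul, Mesnager and Norris \cite{BMN}: conjugating by the flow, the process $s\mapsto J_{s,1}^\eps X_i(x_s^\eps)$ becomes an adapted Stratonovich semimartingale whose drift and diffusion coefficients are themselves Lie brackets of the $X_j$'s. If $\xi^*V_\cdot^{\eps,i}$ is small in $L^2(\db s)$, successive Norris reductions propagate this smallness to brackets of length at most the step $r$ of the sub-Riemannian structure, which contradicts the uniform H\"ormander condition on $U$ established above and yields $\pr[\xi^*\tilde c_1^\eps\xi<\delta]\leq C_q\delta^q$ uniformly in $\eps$ and $\xi$. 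The principal obstacle is preventing the constants in Norris's lemma from degenerating as $\eps\to 0$: these depend on sup-norms of the semimartingale coefficients and their derivatives, and must be tracked through the scaling map $\scale_\eps$. The homogeneity of the nilpotent approximations together with the adapted-chart weight structure are exactly what keep the rescaled coefficients uniformly bounded, while the prefactor $\eps$ in front of $\tilde c_1^\eps$ absorbs the $\eps^{-1/2}$-powers accumulated when a coordinate of weight $k$ is recovered from a bracket of length $k$.
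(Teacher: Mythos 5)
Your strategy is genuinely different from the paper's. You attempt to redo the Kusuoka--Stroock/Norris argument directly for the rescaled process, tracking the constants through the $\scale_\eps$-scaling so that the small-ball estimate $\pr[\xi^*\tilde c_1^\eps\xi<\delta]\leq C_q\delta^q$ holds uniformly in $\eps$. The paper instead leverages the non-degeneracy of the \emph{limit}: Proposition~\ref{prop:sHlimit} shows the nilpotent approximations $\tilde X_i$ are strongly H\"ormander, so the limiting matrix $\tilde c_1$ has $\mineig^{-1}\in L^p$ for all $p$; Lemma~\ref{lem:leadterm} and Lemma~\ref{lem:eigest} give a quantitative pathwise comparison $\mineig^\eps\geq\tfrac12\mineig$ on a good event $\Omega(\alpha,\beta,\gamma,\delta)$; the complement is handled by Markov's inequality for the good-event parameters together with the \emph{crude} Kusuoka--Stroock bound $\|\det(\tilde c_1^\eps)^{-1}\|_p\leq C(p)\eps^{-S/2}$, and the parameters are tuned as powers of $\eps$ (e.g.\ $\alpha\sim\eps^{1/8}$) so that the $\eps^{-S/2}$ blow-up is absorbed by the polynomial decay of the bad-event probability. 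This sidesteps the need to make Norris's lemma $\eps$-uniform.

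Within your own route there is a concrete gap. You confine $\tilde x_t^\eps$ to a \emph{fixed} neighbourhood $U$ of $0$, where the rescaled fields satisfy a uniform H\"ormander condition, and dismiss the exit event as having ``overwhelming probability.'' But the exit probability from a fixed $U$ converges, as $\eps\to 0$, to the exit probability of the genuine limiting diffusion $\tilde x_t$ from $U$, which is a \emph{fixed positive number} -- it does not decay in $\eps$ or in $\delta$. On the exit event you have no control over $\det(\tilde c_1^\eps)^{-1}$, so the decomposition
\begin{equation*}
\pr\left[\xi^*\tilde c_1^\eps\xi<\delta\right]\leq
\pr\left[\text{exit from }U\right]+
\pr\left[\{\xi^*\tilde c_1^\eps\xi<\delta\}\cap\{\text{stay in }U\}\right]
\end{equation*}
does not yield $C_q\delta^q$: the first term is a fixed constant. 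To repair this you would have to let $U$ grow like $\delta^{-c}$ and track the polynomial growth (in the radius of $U$) of both the Norris constants and the quantitative H\"ormander spread for the polynomial vector fields $\tilde X_i$; this is not done, and is exactly the bookkeeping that the paper avoids by instead invoking the off-the-shelf $\eps^{-S/2}$ bound to absorb the bad event. Relatedly, the assertion that ``successive Norris reductions'' deliver uniform constants is stated as a programme rather than carried out -- the key quantitative content of Lemmas~\ref{lem:leadterm}--\ref{lem:eigest} (or an $\eps$-uniform Norris estimate replacing them) is missing.
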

We see that
the uniform non-degeneracy of the rescaled Malliavin covariance
matrices $\tilde c_1^\eps$ is a consequence of the non-degeneracy of the
limiting diffusion
process $(\tilde x_t)_{t\in[0,1]}$
with generator $\tilde\lo$. The latter is implied by the
nilpotent approximations $\tilde X_1,\dots,\tilde X_m$ 
satisfying the strong H\"ormander condition everywhere on $\R^d$,
as proven in
Section~\ref{sec:graded}.

\medskip

\paragraph{{\it Organisation of the paper}}
The paper is organised as follows.
In Section~\ref{sec:graded},
we define the scaling operator $\scale_\eps$ with which we rescale the
fluctuations of the diffusion loop to obtain a non-degenerate limit.
It also sets up notations for subsequent sections and proves
preliminary results from which we deduce properties of the
limiting measure. In Section~\ref{sec:UND}, we characterise the
leading-order terms of the rescaled Malliavin covariance matrices
$\tilde c_1^\eps$ as $\eps\to 0$
and use this to prove Theorem~\ref{thm:UND}.
Equipped with the uniform non-degeneracy result,
in Section~\ref{sec:convofdiffmeas}, we adapt the
analysis from~\cite{BMN} to prove
Theorem~\ref{propn:convofbridges}. The approach presented
is based on ideas from Azencott, Bismut and Ben Arous and
relies on tools from Malliavin calculus. Finally, in
Section~\ref{pp:local}, we employ a
localisation argument to prove Theorem~\ref{thm:convofbridges}
and give an example to illustrate the result. Moreover,
we discuss the
occurrence of non-Gaussian behaviour in the $\sqrt{\eps}$-rescaled
fluctuations of diffusion loops. 

\medskip

\paragraph{{\it Acknowledgement}}
I would like to thank James Norris
for suggesting this problem and for his guidance
and many helpful discussions.

\section{Graded structure and nilpotent approximation}
\label{sec:graded}
We introduce the notion of an adapted chart and of an associated
dilation $\delta_\eps\colon \R^d\to\R^d$ which allows us to rescale the
fluctuations of a diffusion loop in a way which gives rise to a
non-degenerate limit as $\eps\to 0$. To be able to
characterise this limiting
measure later, we define the nilpotent approximation
of a vector field on $M$
and show that the nilpotent approximations of a sub-Riemannian
structure form a sub-Riemannian structure themselves.
This section
is based on Bianchini and Stefani \cite{bianchini} and Paoli
\cite{paoli}, but we made some adjustments because the drift term $X_0$
plays a different role in our setting.
At the end, we
present an example to illustrate the various constructions.

\subsection{Graded structure induced by a
sub-Riemannian structure}
Let $(X_1,\dots,X_m)$ be a sub-Riemannian structure on $M$ and
fix $x\in M$. For $k\geq 1$, set
\begin{equation*}
  \bracol_k=
  \left\{[X_{i_1},[X_{i_2},\dots, [X_{i_{k-1}},X_{i_k}]\dots]]
  \colon 1\leq i_1,\dots,i_k\leq m\right\}
\end{equation*}
and, for $n\geq 0$,
define a subspace of the space of smooth vector fields
on $M$ by
\begin{equation*}
  C_n=\operatorname{span}\bigcup_{k=1}^n\bracol_k\;,
\end{equation*}
where the linear combinations are taken over $\R$.
Note that $C_0=\{0\}$. Let
$C=\operatorname{Lie}\{X_1,\dots,X_m\}$ be the Lie algebra over $\R$
generated by the vector fields $X_1,\dots,X_m$. We observe that
$C_n\subset C_{n+1}$ as well as
$[C_{n_1},C_{n_2}]\subset C_{n_1+n_2}$ for $n_1,n_2\geq 0$ and that
$\bigcup_{n\geq 0} C_n=C$. Hence, $\fil=\{C_n\}_{n\geq 0}$ is an
increasing filtration of the subalgebra $C$ of the Lie algebra of
smooth vector fields on $M$.
Consider the subspace $C_n(x)$ of the tangent space $T_xM$ given by
\begin{equation*}
  C_n(x)=\left\{X(x)\colon X\in C_n\right\}\;.
\end{equation*}
Define $d_n=\operatorname{dim} C_n(x)$.
Since
$X_1,\dots,X_m$ are assumed to satisfy the strong H\"ormander
condition, we have $\bigcup_{n\geq 0} C_n(x) = T_x M$, and it follows
that
\begin{equation*}
  N=\min\{n\geq 1\colon d_n=d\}
\end{equation*}
is well-defined. We call $N$ the step of the filtration $\fil$ at $x$.
\begin{defn}
  A chart $(U,\theta)$ around $x\in M$ is called an adapted
  chart to the filtration $\fil$ at $x$ if $\theta(x)=0$ and,
  for all $n\in\{1,\dots,N\}$,
  \begin{enumerate}[{\rm (i)}]
  \item \label{cond1}$\displaystyle C_n(x)=
    \operatorname{span}\left\{\frac{\pt}{\pt\theta^1}(x), 
      \dots,\frac{\pt}{\pt\theta^{d_n}}(x)\right\}\;,$ and
  \item \label{cond2}
    $\left(\operatorname{D}\theta^k\right)(x)=0$ for every
    differential operator $\operatorname{D}$ of the form
    \begin{equation*}
      \operatorname{D}=Y_1\dots Y_n\quad\mbox{with}\quad
      Y_1,\dots,Y_n\in\{X_1,\dots,X_m\}
    \end{equation*}
    and all $k>d_n\;.$
  \end{enumerate}
\end{defn}
Note that condition (\ref{cond2}) is equivalent to requiring that
$(\operatorname{D}\theta^k)(x)=0$ for every
differential operator
$\operatorname{D}\in\operatorname{span}
\{Y_1\cdots Y_j\colon Y_l\in C_{i_l}\mbox{ and }
i_1+\dots+i_j\leq n\}$ and all $k>d_n$.
The existence of an adapted chart to the filtration $\fil$ at $x$ is
ensured by \cite[Corollary~3.1]{bianchini}, which explicitly
constructs such a chart by considering the integral curves of the
vector fields $X_1,\dots,X_m$. However, we should keep in mind that
even though being adapted at $x$ is a local property,
the germs of adapted charts at $x$ need not coincide.

Unlike Bianchini and Stefani \cite{bianchini}, we choose to
construct 
our graded structure on $\R^d$ instead of on the domain $U$ of an
adapted chart, as this works better with our analysis.
Define weights $w_1,\dots,w_d$ by setting
$w_k=\min\{l\geq 1\colon d_l\geq k\}$.
This definition immediately implies
$1\leq w_1 \leq \dots\leq w_d=N$.
Let $\delta_\eps\colon\R^d\to\R^d$ be the anisotropic dilation
given by
\begin{equation*}
  \delta_\eps(y)=\delta_\eps\left(y^1,\dots,y^k,\dots,y^d\right)=
  \left(\eps^{w_1/2}y^1,\dots,\eps^{w_k/2}y^k,\dots,\eps^{w_d/2}y^d\right)\;,
\end{equation*}
where $(y^1,\dots,y^d)$ are Cartesian coordinates on $\R^d$.
For a non-negative integer $w$,
a polynomial $g$ on $\R^d$ is called homogeneous of weight $w$
if it satisfies $g\circ\delta_\eps=\eps^{w/2}g$.
For instance, the monomial $y_1^{\alpha_1}\dots y_d^{\alpha_d}$ is homogeneous
of weight $\sum_{k=1}^d\alpha_k w_k$. We denote the set of
polynomials which are homogeneous
of weight $w$ by $\mathcal{P}(w)$. Note that
the zero polynomial is
contained in $\mathcal{P}(w)$ for all non-negative integers $w$.
Following \cite{bianchini}, the graded order $\ord(g)$ of a polynomial
$g$ is defined by the property
\begin{equation*}
  \ord(g)\geq i \quad\mbox{if and only if}\quad
  g\in\bigoplus_{w\geq i}\mathcal{P}(w)\;.
\end{equation*}
Thus, the graded order of a non-zero polynomial $g$ is the maximal
non-negative integer $i$ such that
$g\in\oplus_{w\geq i}\mathcal{P}(w)$ whereas the graded order of the
zero polynomial is set to be $\infty$.
Similarly, for a smooth function $f\in C^\infty(V)$,
where $V\subset \R^d$ is an open neighbourhood of $0$, we define its
graded order $\ord(f)$ by requiring that $\ord(f)\geq i$ if
and only if each
Taylor approximation of $f$ at $0$ has graded order at least $i$.
We see
that the graded order of a smooth function is either a non-negative
integer or $\infty$.
Furthermore,
for an integer $a$,
a polynomial vector field $Y$ on $\R^d$ is called
homogeneous of weight $a$ if, for all $g\in\mathcal{P}(w)$, we have
$Y g\in\mathcal{P}(w-a)$. Here we set $\mathcal{P}(b)=\{0\}$
for negative integers $b$.
The weight of a general polynomial
vector field is defined to be the smallest weight of its homogeneous
components.
Moreover, the graded order $\ord(\operatorname{D})$ of a differential
operator $\operatorname{D}$ on $V$ is given by
saying that
\begin{equation*}
  \ord(\operatorname{D})\leq i\quad\mbox{if and only if}\quad
  \ord(\operatorname{D}g)\geq\ord(g)-i\mbox{ for all polynomials }g\;.
\end{equation*}
For example, the polynomial vector field
$y^1\frac{\pt}{\pt y^1}+(y^1)^2\frac{\pt}{\pt y^1}$
on $\R^d$ has weight $-w_1$
but considered as a differential operator it has graded order $0$.
It also follows that the graded order of a differential operator
takes values in $\mathbb{Z}\cup\{\pm\infty\}$ and that
the zero differential operator has graded order
$-\infty$.
In the remainder, we need the notions of the weight of a polynomial
vector field and the graded order of a vector field understood as a
differential operator.
For smooth vector fields $X_1$ and $X_2$ on $V$, it
holds true that
\begin{equation}\label{eq:ordineq}
  \ord([X_1,X_2])\leq \ord(X_1)+\ord(X_2)\;.
\end{equation}
We further observe that for any smooth vector field $X$ on $V$
and every integer $n$, there exists
a unique polynomial vector field $X^{(n)}$ of weight at least
$n$ such that
$\ord(X-X^{(n)})\leq n-1$, namely the sum of the homogeneous vector
fields of weight greater than or equal to $n$ in the formal Taylor
series of $X$ at $0$.
\begin{defn}
  Let $X$ be a smooth vector field on $V$.
  We call $X^{(n)}$ the graded
  approximation of weight $n$ of $X$.
\end{defn}
Note that $X^{(n)}$ is a polynomial vector field and hence, it can
be considered as a vector field defined on all of $\R^d$.

\subsection{Nilpotent approximation}
Let $(U,\theta)$ be an adapted chart to the filtration
induced by a sub-Riemannian structure $(X_1,\dots,X_m)$ on $M$
at $x$ and set
$V=\theta(U)$. Note that, for $i\in\{1,\dots,m\}$, the pushforward vector
field $\theta_*X_i$ is a vector field on $V$ and write $\tilde
X_i$ for the graded approximation $(\theta_*X_i)^{(1)}$ of weight $1$
of $\theta_*X_i$.
\begin{defn}
  The polynomial vector fields $\tilde X_1,\dots,\tilde X_m$ on $\R^d$
  are called the nilpotent approximations of the vector fields
  $X_1,\dots,X_m$ on $M$.
\end{defn}
By \cite[Theorem~3.1]{bianchini}, we know that
$\ord(\theta_*X_i)\leq 1$. Thus, the formal Taylor series of
$\theta_*X_i$ at $0$ cannot contain any homogeneous components of
weight greater than or equal to two.
This implies that $\tilde X_i$ is a homogeneous vector
field of weight $1$ and therefore,
\begin{equation*}
  \left(\delta_\eps^{-1}\right)_* \tilde X_i =\eps^{-1/2}\tilde X_i
  \quad\mbox{ for all }i\in\{1,\dots,m\}\;.
\end{equation*}
Moreover, from $\ord(\theta_*X_i-\tilde X_i)\leq 0$, we deduce that
\begin{equation*}
  \sqrt{\eps}\left(\delta_\eps^{-1}\right)_*
  (\theta_* X_i )\to\tilde X_i
  \quad\mbox{as}\quad\eps\to 0
  \quad\mbox{ for all }i\in\{1,\dots,m\}\;.
\end{equation*}
This convergence holds on all of $\R^d$ because
for $y\in\R^d$ fixed, we have $\delta_\eps(y)\in V$
for $\eps>0$ sufficiently small.
\begin{remark}\label{cascade}
  The vector fields
  $\tilde X_1,\dots,\tilde X_m$ on $\R^d$ have
  a nice cascade structure. Since $\tilde X_i$, for
  $i\in\{1,\dots,m\}$, contains the terms of
  weight $1$ the component $\tilde X_i^k$, for $k\in\{1,\dots,d\}$, does
  not depend on the coordinates with weight greater than or equal to
  $w_k$ and depends only linearly on the coordinates with weight $w_k-1$.
\end{remark}
We show that the nilpotent approximations
$\tilde X_1,\dots,\tilde X_m$
inherit the strong H\"ormander property from the sub-Riemannian structure
$(X_1,\dots,X_m)$. This result plays a crucial role in the subsequent
sections as it allows us to describe the limiting measure of the
rescaled fluctuations
by a stochastic process whose associated Malliavin
covariance matrix is non-degenerate.
\begin{lemma}\label{spanprop}
  Let $\tilde\bracol_k(0)=
    \left\{[\tilde X_{i_1},[\tilde X_{i_2},\dots,
      [\tilde X_{i_{k-1}},\tilde X_{i_k}]\dots]](0)
      \colon 1\leq i_1,\dots,i_k\leq m\right\}$.
  Then
  \begin{equation}\label{spaneq}
    \operatorname{span}\bigcup_{k=1}^n\tilde\bracol_k(0)=
    \operatorname{span}\left\{\frac{\pt}{\pt y^1}(0),
      \dots,\frac{\pt}{\pt y^{d_n}}(0)\right\}\;.
  \end{equation}
\end{lemma}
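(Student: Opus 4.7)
The plan is to prove the two inclusions in~\eqref{spaneq} by establishing that, for each $Z\in\bracol_k$, the vector $\tilde Z(0)$ lies in the weight-$k$ coordinate subspace and coincides there with the weight-$k$ projection of $\theta_*Z(0)=(Z\theta^\bullet)(x)$. Combined with condition~(\ref{cond1}) of the adapted chart, summing these layers over $k=1,\dots,n$ will force~\eqref{spaneq}.

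The key step is to show by induction on $k$ that
\begin{equation*}
  \ord(\theta_*Z-\tilde Z)\leq k-1\quad\text{for every }Z\in\bracol_k\;.
\end{equation*}
The base case $k=1$ is immediate from the construction $\tilde X_i=(\theta_*X_i)^{(1)}$ and $\ord(\theta_*X_i)\leq 1$. For the inductive step I would write $Z=[X_{i_1},Z']$ with $Z'\in\bracol_{k-1}$, use that the pushforward preserves brackets, and expand
\begin{equation*}
  [\theta_*X_{i_1},\theta_*Z']-[\tilde X_{i_1},\tilde Z']=[\theta_*X_{i_1}-\tilde X_{i_1},\tilde Z']+[\tilde X_{i_1},\theta_*Z'-\tilde Z']+[\theta_*X_{i_1}-\tilde X_{i_1},\theta_*Z'-\tilde Z']\;.
\end{equation*}
Since $\tilde X_{i_1}$ is homogeneous of weight $1$ and $\tilde Z'$ of weight $k-1$, so that $\ord(\tilde X_{i_1})=1$ and $\ord(\tilde Z')=k-1$, together with $\ord(\theta_*X_{i_1}-\tilde X_{i_1})\leq 0$ and the inductive bound $\ord(\theta_*Z'-\tilde Z')\leq k-2$, inequality~\eqref{eq:ordineq} bounds the graded order of each commutator on the right by $k-1$.

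From this I would extract two consequences. First, since $\tilde Z$ is homogeneous of weight $k$, its $j$-th component $\tilde Z y^j$ lies in $\mathcal{P}(w_j-k)$, so $\tilde Z(0)$ is supported on coordinate directions with $w_j=k$, that is $d_{k-1}<j\leq d_k$. Second, applying the order bound with the polynomial $y^j$ yields $\ord((\theta_*Z-\tilde Z)y^j)\geq w_j-(k-1)\geq 1$ whenever $w_j\geq k$, so the constant terms match and $\tilde Z^j(0)=(\theta_*Z)^j(0)=(Z\theta^j)(x)$ for every $j$ with $w_j\geq k$.

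Finally, condition~(\ref{cond1}) of the adapted chart gives $\theta_*C_k(x)=\operatorname{span}\{\pt/\pt y^j(0)\colon j\leq d_k\}$, while $C_k=\operatorname{span}\bracol_k+C_{k-1}$. Hence the projection of $\{\theta_*Z(0)\colon Z\in\bracol_k\}$ onto the weight-$k$ coordinates surjects onto $\operatorname{span}\{\pt/\pt y^j(0)\colon d_{k-1}<j\leq d_k\}$, and by the identification above this projection coincides with $\tilde\bracol_k(0)$. Summing these layers over $k=1,\dots,n$ yields~\eqref{spaneq}. The only delicate point I anticipate is the graded-order arithmetic in the inductive step; everything else is a direct consequence of homogeneity and the adapted-chart conditions.
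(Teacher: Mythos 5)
Your proof is correct. It uses the same two ingredients as the paper's argument, namely the order bound $\ord(\theta_*Z-\tilde Z)\leq k-1$ for $Z\in\bracol_k$ derived from~\eqref{eq:ordineq} and $\ord(\theta_*X_i-\tilde X_i)\leq 0$, together with condition~(\ref{cond1}) of the adapted chart, but the organization differs. The paper runs a single induction on $n$ in which the statement~\eqref{spaneq} at level $n-1$ is itself the induction hypothesis: since $\ord(\theta_*Z-\tilde Z)\leq n-1$ forces $(\theta_*Z-\tilde Z)(0)$ into $\operatorname{span}\{\pt/\pt y^j(0)\colon j\leq d_{n-1}\}$, the hypothesis converts this into $\operatorname{span}\bigcup_{k<n}\tilde\bracol_k(0)$, and the two inclusions are then read off together. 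You instead isolate the inductive content in the order bound alone, and then conclude without further induction by the layer-by-layer identification $\tilde Z(0)=\operatorname{proj}_{\{w_j=k\}}\bigl(\theta_*Z(0)\bigr)$: homogeneity of $\tilde Z$ kills all components with $w_j\neq k$, the order bound matches the remaining ones with $\theta_*Z(0)$, and condition~(\ref{cond1}) plus $C_k=\operatorname{span}\bracol_k+C_{k-1}$ gives surjectivity onto the weight-$k$ slab. This buys you the sharper graded statement $\operatorname{span}\tilde\bracol_k(0)=\operatorname{span}\{\pt/\pt y^j(0)\colon d_{k-1}<j\leq d_k\}$, of which~\eqref{spaneq} is the cumulative sum, at the cost of somewhat more explicit bookkeeping. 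One very minor inaccuracy: homogeneity of weight $a$ gives $\ord\leq a$ by definition rather than equality (the zero field is homogeneous of every weight), but you only use the upper bound, so nothing breaks.
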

\begin{proof}
  We prove this lemma by induction. For the base case, we note that
  $\ord(\theta_*X_i-\tilde X_i)\leq 0$ implies $\tilde
  X_i(0)=(\theta_*X_i)(0)$. Hence, by property (\ref{cond1}) of an
  adapted chart $\theta$, we obtain
  \begin{equation*}
    \operatorname{span}\tilde\bracol_1(0)=
    \operatorname{span}\left\{\tilde X_1(0),\dots,\tilde
      X_m(0)\right\}=
    (\theta_*C_1)(0)=\operatorname{span}\left\{\frac{\pt}{\pt y^1}(0),
      \dots,\frac{\pt}{\pt y^{d_1}}(0)\right\}\;,
  \end{equation*}
  which proves (\ref{spaneq}) for $n=1$. Let us now assume the result
  for $n-1$. Due to
  $\ord(\theta_*X_i-\tilde X_i)\leq 0$ and
  using (\ref{eq:ordineq}) as well
  as the bilinearity of the Lie bracket, it follows that
  \begin{equation*}
    \ord\left(\theta_*[X_{i_1},[X_{i_2},\dots, [X_{i_{n-1}},X_{i_n}]\dots]]-
    [\tilde X_{i_1},[\tilde X_{i_2},\dots,
    [\tilde X_{i_{n-1}},\tilde X_{i_n}]\dots]]\right)\leq n-1\;.
  \end{equation*}
  Applying the induction hypothesis, we deduce that
  \begin{align*}
    &\left(\theta_*[X_{i_1},[X_{i_2},\dots, [X_{i_{n-1}},X_{i_n}]\dots]]-
    [\tilde X_{i_1},[\tilde X_{i_2},\dots,
    [\tilde X_{i_{n-1}},\tilde X_{i_n}]\dots]]\right)(0)\\
    &\qquad\qquad\in
    \operatorname{span}\left\{\frac{\pt}{\pt y^1}(0),
      \dots,\frac{\pt}{\pt y^{d_{n-1}}}(0)\right\}=
    \operatorname{span}\bigcup_{k=1}^{n-1}\tilde\bracol_k(0)\;.
  \end{align*}
  This gives
  \begin{equation*}
    \operatorname{span}\left\{\frac{\pt}{\pt y^1}(0),
      \dots,\frac{\pt}{\pt y^{d_{n}}}(0)\right\}=
    (\theta_*C_n)(0)\subset
    \operatorname{span}\bigcup_{k=1}^n\tilde\bracol_k(0)
  \end{equation*}
  and since $\ord\left([\tilde X_{i_1},[\tilde X_{i_2},\dots,
    [\tilde X_{i_{n-1}},\tilde X_{i_n}]\dots]]\right)\leq n$, the other
  inclusion holds as well. Thus, we have established
  equality, which concludes the induction step.
\end{proof}
The lemma allows us to prove the following proposition.
\begin{propn}\label{prop:sHlimit}
  The nilpotent approximations
  $\tilde X_1,\dots,\tilde X_m$ satisfy the strong
  H\"ormander condition everywhere on $\R^d$.
\end{propn}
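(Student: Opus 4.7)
My plan is to promote the spanning at the origin established by Lemma~\ref{spanprop} to every point of $\R^d$ via a scaling argument that exploits the homogeneity of the nilpotent approximations.

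First I would record the homogeneity of iterated brackets. Since each $\tilde X_i$ is homogeneous of weight $1$, naturality of the Lie bracket under pushforward and induction on length yield $(\delta_\eps)_*Y=\eps^{k/2}Y$ for every iterated bracket $Y$ of length $k$. A useful by-product is that brackets of length $k>N$ must vanish: a non-zero polynomial vector field homogeneous of weight $k$ has a component lying in $\mathcal{P}(w_j-k)$ for some $j$, which is non-trivial only if $w_j\geq k$, whereas $\max_jw_j=N$. Consequently the set
\begin{equation*}
  A=\biggl\{y\in\R^d\colon\operatorname{span}\bigcup_{k=1}^N\tilde\bracol_k(y)=T_y\R^d\biggr\}
\end{equation*}
is cut out by a full-rank condition on a \emph{finite} collection of polynomial vector fields, and so is open in $\R^d$ (any $d$ of the brackets that are independent at a point remain independent in a neighbourhood).

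Next I would show that $A$ is $\delta_\eps$-invariant for every $\eps>0$. Unpacking the identity $(\delta_\eps)_*Y=\eps^{k/2}Y$ pointwise gives
\begin{equation*}
  Y(\delta_\eps(y))=\eps^{-k/2}(\db\delta_\eps)_y\bigl(Y(y)\bigr)\;.
\end{equation*}
Since $(\db\delta_\eps)_y$ is a fixed linear isomorphism and the scalar factors $\eps^{-k/2}$ do not affect spans, the subspace of $T_{\delta_\eps(y)}\R^d$ generated by all brackets of length at most $N$ at $\delta_\eps(y)$ equals $(\db\delta_\eps)_y$ applied to the corresponding subspace at $y$. In particular these subspaces have equal dimension, so $y\in A$ if and only if $\delta_\eps(y)\in A$.

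To conclude, Lemma~\ref{spanprop} with $n=N$ shows $0\in A$. For an arbitrary $y\in\R^d$ the explicit form of $\delta_\eps$ gives $\delta_\eps(y)\to 0$ as $\eps\to 0$, so openness of $A$ yields $\delta_\eps(y)\in A$ for all sufficiently small $\eps>0$. The $\delta_\eps$-invariance of $A$ then forces $y\in A$, whence $A=\R^d$, which is precisely the strong H\"ormander condition for $\tilde X_1,\dots,\tilde X_m$. The only points requiring care are the scaling identity for iterated brackets and its pointwise unpacking; after these, the open-plus-invariance conclusion is immediate.
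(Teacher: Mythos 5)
Your proposal is correct and follows essentially the same route as the paper: spanning at $0$ from Lemma~\ref{spanprop}, openness via a full-rank (determinant) condition, and then propagation to all of $\R^d$ from the homogeneity of the $\tilde X_i$ under $\delta_\eps$ together with naturality of the Lie bracket. You simply make explicit the $\delta_\eps$-invariance of the spanning set and the $\delta_\eps(y)\to 0$ limit that the paper leaves implicit; the aside on brackets of length $>N$ vanishing is a correct observation but not needed, since the strong H\"ormander condition only asks that some collection of brackets spans.
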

\begin{proof}
  By definition, we have $d_N=d$, and Lemma~\ref{spanprop} implies that
  \begin{equation*}
    \operatorname{span}\bigcup_{k=1}^N\tilde\bracol_k(0)=
    \operatorname{span}\left\{\frac{\pt}{\pt y^1}(0),
      \dots,\frac{\pt}{\pt y^{d}}(0)\right\}=\R^d\;,
  \end{equation*}
  i.e. $\tilde X_1,\dots,\tilde X_m$ satisfy the strong H\"ormander
  condition at $0$. In particular, there are vector fields
  \begin{equation*}
    Y_1,\dots,Y_d\in \bigcup_{k=1}^N
    \left\{[\tilde X_{i_1},[\tilde X_{i_2},\dots,
      [\tilde X_{i_{k-1}},\tilde X_{i_k}]\dots]]
      \colon 1\leq i_1,\dots,i_k\leq m\right\}
  \end{equation*}
  such that $Y_1(0),\dots,Y_d(0)$ are linearly independent,
  i.e. $\operatorname{det}(Y_1(0),\dots,Y_d(0))\not= 0$.
  By continuity of the map
  $y\mapsto\operatorname{det}(Y_1(y),\dots,Y_d(y))$, it
  follows that there exists a neighbourhood $V_0$ of $0$ on which
  the vector fields $\tilde X_1,\dots,\tilde X_m$ satisfy the
  strong H\"ormander condition. Since the
  Lie bracket commutes with 
  pushforward, the homogeneity property
  $\left(\delta_\eps^{-1}\right)_* \tilde X_i =\eps^{-1/2}\tilde X_i$
  of the nilpotent approximations shows that the strong H\"ormander
  condition is in fact satisfied on all of $\R^d$.
\end{proof}

We conclude with an example.
\begin{eg0}\label{example}\rm
  Let $M=\R^2$ and fix $x=0$. Let $X_1$ and $X_2$ be the vector fields on
  $\R^2$ defined by
  \begin{equation*}
    X_1=\frac{\pt}{\pt x^1}+x^1\frac{\pt}{\pt x^2}
    \qquad\mbox{and}\qquad X_2=x^1\frac{\pt}{\pt x^1}\;,
  \end{equation*}
  with respect to Cartesian coordinates $(x^1,x^2)$ on $\R^2$.
  We compute
  \begin{equation*}
    [X_1,X_2]=\frac{\pt}{\pt x^1}-x^1\frac{\pt}{\pt x^2}
    \qquad\mbox{and}\qquad
    [X_1,[X_1,X_2]]=-2\frac{\pt}{\pt x^2}\;.
  \end{equation*}
  It follows that
  \begin{equation*}
    C_1(0)=C_2(0)=\operatorname{span}\left\{\frac{\pt}{\pt
        x^1}(0)\right\}\;,\enspace C_3(0)=\R^2
   \quad\mbox{and}\quad
   d_1=d_2=1\;,\enspace d_3=2\;.
  \end{equation*}
  We note that the Cartesian coordinates are not adapted to
  the filtration induced by $(X_1,X_2)$ at $0$ because,
  for instance, $\left((X_1)^2\,x_2\right)(0)=1$.
  Following the constructive proof
  of \cite[Corollary~3.1]{bianchini}, we find a global adapted chart
  $\theta\colon\R^2\to\R^2$ at $0$ given by
  \begin{equation*}
    \theta^1=x^1\quad\mbox{and}\quad
    \theta^2=-\frac{1}{2}(x^1)^2+x^2\;.
  \end{equation*}
  The corresponding weights are $w_1=1$, $w_2=3$ and the
  associated anisotropic dilation is
  \begin{equation*}
    \delta_\eps(y^1,y^2)=\left(\eps^{1/2}y^1,\eps^{3/2}y^2\right)\;,
  \end{equation*}
  where $(y^1,y^2)$ are Cartesian coordinates on our
  new copy of $\R^2$.
  For the pushforward vector fields of $X_1$ and $X_2$ by
  $\theta$, we obtain
  \begin{equation*}
    \theta_*X_1=\frac{\pt}{\pt y^1}
    \qquad\mbox{and}\qquad
    \theta_*X_2=
    y^1\left(\frac{\pt}{\pt y^1}-y^1\frac{\pt}{\pt y^2}\right)\;.
  \end{equation*}
  From this we can read off that
  \begin{equation*}
    \tilde X_1=\frac{\pt}{\pt y^1}
    \qquad\mbox{and}\qquad
    \tilde X_2=-\left(y^1\right)^2\frac{\pt}{\pt y^2}
  \end{equation*}
  because $y^1\frac{\pt}{\pt y^1}$ is a vector field
  of weight $0$. We observe that
  $\tilde X_1$ and $\tilde X_2$ are indeed homogeneous vector fields
  of weight $1$
  on $\R^2$ which satisfy the strong H\"ormander condition
  everywhere.
\end{eg0}

\section{Uniform non-degeneracy of the rescaled
Malliavin covariance matrices}
\label{sec:UND}
We prove the uniform non-degeneracy of suitably rescaled Malliavin
covariance matrices under the global condition
\begin{equation*}
  M=\R^d\quad\mbox{and}\quad X_0,X_1,\dots,X_m\in C_b^\infty(\R^d,\R^d)\;,
\end{equation*}
and the additional assumption that
$X_0(y)\in\operatorname{span}\{X_1(y),\dots,X_m(y)\}$ for all $y\in
\R^d$. We further assume that $\theta\colon\R^d\to\R^d$ is a global
diffeomorphism with bounded derivatives of all positive
orders and an adapted chart to
the filtration induced by the sub-Riemannian structure
$(X_1,\dots,X_m)$ at $x\in\R^d$ fixed.
Such a diffeomorphism always exists as \cite[Corollary~3.1]{bianchini}
guarantees the existence of an adapted chart $\tilde\theta\colon
U\to\R^d$ and due to \cite[Lemma~5.2]{palais}, we can construct a
global diffeomorphism $\theta\colon\R^d\to\R^d$ with bounded derivatives
of all positive
orders which agrees with $\tilde\theta$ on a small enough
neighbourhood of $x$ in $U$. We note that
$\theta_*X_0,\theta_*X_1,\dots,\theta_*X_m$ are also smooth bounded vector
fields on $\R^d$ with bounded derivatives of all orders.
In particular, to simplify the notation in
the subsequent analysis, we may assume $x=0$ and that
$\theta$ is the identity map. By Section~\ref{sec:graded}, this means
that, for Cartesian coordinates $(y_1,\dots,y_d)$ on $\R^d$ and
for all $n\in\{1,\dots,N\}$, we have
\begin{enumerate}[{\rm (i)}]
\item $\displaystyle C_n(0)=
  \operatorname{span}\left\{\frac{\pt}{\pt y^1}(0), 
    \dots,\frac{\pt}{\pt y^{d_n}}(0)\right\}\;,$ and
\item \label{cond2sec3}
  $\left(\operatorname{D} y^k\right)(x)=0$ for every
  differential operator
  $\operatorname{D}\in\{Y_1\cdots Y_j\colon Y_l\in C_{i_l}\mbox{ and }
  i_1+\dots+i_j\leq n\}$ and all $k>d_n\;.$
\end{enumerate}
Write $\langle\cdot,\cdot\rangle$ for the standard inner product on $\R^d$
and, for $n\in\{0,1,\dots,N\}$, denote the orthogonal complement of
$C_n(0)$ with respect to this inner product by $C_n(0)^\perp$.
As defined in the previous section, we further let
$\delta_\eps\colon\R^d\to\R^d$ be the anisotropic dilation induced by
the filtration
at $0$ and we consider the nilpotent approximations $\tilde
X_1,\dots,\tilde X_m$ of the vector fields $X_1,\dots,X_m$.

Let $(B_t)_{t\in[0,1]}$ be a Brownian motion in $\R^m$, which is
assumed to be realised as the coordinate process on the path space
$\{w\in C([0,1],\R^m)\colon w_0=0\}$ under Wiener measure $\pr$.
Define
$\underline{X}_0$ to be
the vector field on $\R^d$ given by
\begin{equation*}
  \underline X_0=X_0+\frac{1}{2}\sum_{i=1}^m\nabla_{X_i}X_i\;,
\end{equation*}
where $\nabla$ is the Levi-Civita connection with respect to the
Euclidean metric.
Under our global assumption,
the It\^o stochastic differential equation in $\R^d$
\begin{equation*}
  \db x_t^\eps=\sum_{i=1}^m \sqrt{\eps} X_i(x_t^\eps)\dd B_t^i+
  \eps\underline X_0(x_t^\eps)\dd t\;,\quad x_0^\eps=0
\end{equation*}
has a unique strong solution $(x_t^\eps)_{t\in[0,1]}$. Its law on
$\Omega=C([0,1],\R^d)$ is $\mu_\eps^0$.
We consider the rescaled diffusion process $(\tilde
x_t^\eps)_{t\in[0,1]}$ which is defined by
$\tilde x_t^\eps=\scale_\eps^{-1}(x_t^\eps)$.
It is the unique strong
solution of the It\^o stochastic differential equation
\begin{equation*}
  \db\tilde x_t^\eps=\sum_{i=1}^m\sqrt{\eps}
  \left(\left(\delta_\eps^{-1}\right)_*X_i\right)(\tilde x_t^\eps)\dd B_t^i
  +\eps\left(\left(\delta_\eps^{-1}\right)_*\underline X_0\right)
  (\tilde x_t^\eps)\dd t\;,\quad \tilde x_0^\eps=0\;.
\end{equation*}
Let us further look at
\begin{equation*}
  \db\tilde x_t=\sum_{i=1}^m \tilde X_i(\tilde x_t)\dd B_t^i+
  \underline{\tilde X}_0(\tilde x_t)\dd t\;,\quad \tilde x_0=0\;,
\end{equation*}
where $\underline{\tilde X}_0$ is the vector field on $\R^d$ defined
by
\begin{equation*}
  \underline{\tilde X}_0=
  \frac{1}{2}\sum_{i=1}^m\nabla_{\tilde X_i}\tilde X_i\;.
\end{equation*}
Due to the nice cascade structure discussed in Remark~\ref{cascade} and by
\cite[Proposition~1.3]{jamesSMC}, there exists a unique strong
solution $(\tilde x_t)_{t\in[0,1]}$ to this It\^o
stochastic differential equation in $\R^d$. We recall that
$\sqrt{\eps}\left(\delta_\eps^{-1}\right)_* X_i\to\tilde X_i$
as $\eps\to 0$ for all $i\in\{1,\dots,m\}$ and because
$X_0(y)\in\operatorname{span}
\{X_1(y),\dots,X_m(y)\}$ for all $y\in\R^d$, we further
have $\eps\left(\delta_\eps^{-1}\right)_* X_0\to 0$
as $\eps\to 0$. It follows that, for all $t\in[0,1]$,
\begin{equation}\label{convergence}
  \tilde x_t^\eps\to\tilde x_t\mbox{ as }\eps\to 0
  \mbox{ almost surely and in }L^p
  \mbox{ for all }p<\infty\;.
\end{equation}
For the Malliavin covariance matrices
$\tilde c_1^\eps$ of $\tilde x_1^\eps$ and $\tilde c_1$ of 
$\tilde x_1$, we also obtain that
\begin{equation}\label{convofMCM}
  \tilde c_1^\eps\to\tilde c_1\mbox{ as }\eps\to 0
  \mbox{ almost surely and in }L^p
  \mbox{ for all }p<\infty\;.
\end{equation}
Proposition~\ref{prop:sHlimit} shows that the nilpotent
approximations $\tilde X_1,\dots,\tilde X_m$ satisfy the strong
H\"ormander condition everywhere, which implies the
following non-degeneracy result.
\begin{cor}
  The Malliavin covariance matrix $\tilde c_1$ is
  non-degenerate, i.e. for all $p<\infty$, we have
  \begin{equation*}
    \E\left[\left|\det \left(\tilde c_1\right)^{-1} \right|^p
    \right]<\infty\;.
  \end{equation*}
\end{cor}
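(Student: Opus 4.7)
The plan is to apply the classical Hörmander-type theorem for Malliavin covariance matrices (going back to Malliavin and refined by Kusuoka--Stroock and Norris). In Stratonovich form, the SDE for $\tilde x_t$ reads $\db\tilde x_t = \sum_{i=1}^m \tilde X_i(\tilde x_t) \circ \dd B_t^i$, since $\underline{\tilde X}_0$ is precisely the Itô--Stratonovich correction; hence only the diffusion vector fields $\tilde X_1,\dots,\tilde X_m$ enter the analysis. Proposition~\ref{prop:sHlimit} guarantees that these satisfy the strong Hörmander condition everywhere on $\R^d$, so the iterated Lie brackets at $0$ span $\R^d$, which is exactly the input that a Hörmander-type non-degeneracy result needs.

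Concretely, I would express
\begin{equation*}
\tilde c_1=\int_0^1 J_1J_t^{-1}\,\tilde X(\tilde x_t)\tilde X(\tilde x_t)^\top\bigl(J_1J_t^{-1}\bigr)^\top\dd t\;,
\end{equation*}
where $\tilde X$ is the matrix with columns $\tilde X_i$ and $J_t$ denotes the Jacobian flow of $\tilde x_t$ with respect to the initial condition. Then one runs the standard Norris--Stroock chain argument: if $\langle\tilde c_1 v,v\rangle<\eta$, then each $\langle J_t^{-1}\tilde X_i(\tilde x_t),J_1^\top v\rangle$ is small in $L^2([0,1])$, and iterating Norris's lemma on the resulting continuous semimartingales propagates this to $\langle J_t^{-1}Y(\tilde x_t),J_1^\top v\rangle$ for every $Y$ obtained as an iterated bracket of the $\tilde X_i$ up to depth $N$. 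Evaluating at $t=0$ forces $\langle Y(0),v\rangle$ to be small for enough bracket vector fields $Y$ to span $\R^d$, hence $|v|$ must be small, and the usual tail estimate on $\mineig(\tilde c_1)$ gives $\E[|\det\tilde c_1|^{-p}]<\infty$ for all $p<\infty$.

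The main obstacle is that the nilpotent approximations $\tilde X_i$ are polynomial, and hence unbounded, whereas the classical formulations of the above machinery demand $C_b^\infty$ coefficients. The remedy is the cascade structure recorded in Remark~\ref{cascade}: the component $\tilde X_i^k$ depends only on coordinates of weight strictly less than $w_k$ and at most linearly on those of weight $w_k-1$. Via \cite[Proposition~1.3]{jamesSMC}, this yields global existence together with $L^p$-bounds of all orders for $\tilde x_t$, the Jacobian flow $J_t$ and its inverse $J_t^{-1}$, as well as for the higher Malliavin derivatives of $\tilde x_t$. These are exactly the integrability inputs required to push the Norris argument through in the unbounded setting; the rest of the proof is then routine.
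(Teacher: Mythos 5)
Your proposal is correct and follows essentially the same route the paper takes: the paper derives this corollary directly from Proposition~\ref{prop:sHlimit} (strong H\"ormander condition for the nilpotent approximations) together with Norris's graded framework \cite{jamesSMC}, which is exactly what handles the unbounded polynomial coefficients via the cascade structure of Remark~\ref{cascade}. The paper leaves the proof implicit as an appeal to standard theory (and later cites \cite[Theorem~4.2]{jamesSMC} for the $L^p$ bound on $\mineig^{-1}$), while you spell out the Stratonovich reduction and the Norris--Stroock chain argument; the two are in agreement.
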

Hence, the rescaled diffusion processes $(\tilde x_t^\eps)_{t\in[0,1]}$
have a non-degenerate limiting diffusion process as $\eps\to 0$.
This observation is important in establishing the uniform
non-degeneracy of the rescaled Malliavin covariance matrices
$\tilde c_1^\eps$.
In the following, we first gain control over the leading-order terms of
$\tilde c_1^\eps$ as $\eps\to 0$, which
then allows us to show that the
minimal eigenvalue of $\tilde c_1^\eps$ can be uniformly bounded below on
a set of high probability. Using this property, we prove
Theorem~\ref{thm:UND} at the end of the section.

\subsection{Properties of the rescaled Malliavin
covariance matrix}\label{section3dot1}
Let $(\tilde v_t^\eps)_{t\in[0,1]}$ be the unique stochastic process
in $\R^d\otimes (\R^d)^*$ such that
$(\tilde x_t^\eps,\tilde v_t^\eps)_{t\in[0,1]}$ is the strong solution
of the following system of It\^o stochastic differential equations
starting from $(\tilde x_0^\eps,\tilde v_0^\eps)=(0,I)$.
\begin{align*}
  \db\tilde x_t^\eps&=\sum_{i=1}^m\sqrt{\eps}
  \left(\left(\delta_\eps^{-1}\right)_*X_i\right)(\tilde x_t^\eps)\dd B_t^i
  +\eps\left(\left(\delta_\eps^{-1}\right)_*\underline X_0\right)
  (\tilde x_t^\eps)\dd t\\
  \db\tilde v_t^\eps&=-\sum_{i=1}^m\sqrt{\eps}\tilde v_t^\eps\nabla
  \left(\left(\delta_\eps^{-1}\right)_*X_i\right)(\tilde x_t^\eps)\dd B_t^i
  -\eps\tilde v_t^\eps\left(\nabla
    \left(\left(\delta_\eps^{-1}\right)_*\underline X_0\right)-
    \sum_{i=1}^m\left(\nabla \left(
        \left(\delta_\eps^{-1}\right)_*X_i\right)\right)^2\right)
  (\tilde x_t^\eps)\dd t
\end{align*}
The Malliavin covariance matrix $\tilde c_t^\eps$ of the rescaled
random variable $\tilde x_t^\eps$ can then be expressed as
\begin{equation*}
  \tilde c_t^\eps=\sum_{i=1}^m\int_0^t
  \left(\tilde
    v_s^\eps\left(\sqrt{\eps}\left(\delta_\eps^{-1}\right)_*
      X_i\right)(\tilde x_s^\eps)\right)\otimes
  \left(\tilde
    v_s^\eps\left(\sqrt{\eps}\left(\delta_\eps^{-1}\right)_*
      X_i\right)(\tilde x_s^\eps)\right)\dd s\;.
\end{equation*}
It turns out that we obtain a more tractable expression for
$\tilde c_t^\eps$ if we write
it in terms of $(x_t^\eps,v_t^\eps)_{t\in[0,1]}$, which is the unique strong
solution of the following system of It\^o stochastic differential
equations.
\begin{align*}
  \db x_t^\eps&=\sum_{i=1}^m\sqrt{\eps} X_i(x_t^\eps)\dd B_t^i
  +\eps\underline X_0(x_t^\eps)\dd t\;,\quad x_0^\eps=0\\
  \db v_t^\eps&=-\sum_{i=1}^m\sqrt{\eps}v_t^\eps\nabla X_i(x_t^\eps)\dd B_t^i
  -\eps v_t^\eps\left(\nabla\underline X_0-
    \sum_{i=1}^m(\nabla X_i)^2\right)(x_t^\eps)\dd t\;,
  \quad v_0^\eps=I
\end{align*}
One can check that the stochastic processes
$(v_t^\eps)_{t\in[0,1]}$ and $(\tilde v_t^\eps)_{t\in[0,1]}$
are related by $\tilde v_t^\eps=\scalemat_\eps^{-1}
v_t^\eps\scalemat_\eps$\;, where the map $\scale_\eps$ is understood as an
element in $\R^d\otimes (\R^d)^*$. This implies that
\begin{equation}\label{eq:rescaledMCM}
  \tilde c_t^\eps=\sum_{i=1}^m\int_0^t\left(
  \sqrt{\eps}\scale_\eps^{-1}\left(v_s^\eps X_i(x_s^\eps)\right)\right)
  \otimes\left(
  \sqrt{\eps}\scale_\eps^{-1}\left(v_s^\eps X_i(x_s^\eps)\right)\right)\dd s\;.
\end{equation}
We are interested in gaining control over
the leading-order terms of $\tilde
c_1^\eps$ as $\eps\to 0$. In the corresponding
analysis, we frequently use the
lemma stated below.
\begin{lemma}\label{lem:vXid}
  Let $Y$ be a smooth vector field on $\R^d$. Then
  \begin{equation*}
    \db(v_t^\eps Y(x_t^\eps))=\sum_{i=1}^m\sqrt{\eps} v_t^\eps
    [X_i,Y](x_t^\eps)\dd B_t^i +\eps v_t^\eps\left(
      [X_0,Y]+\frac{1}{2}\sum_{i=1}^m
      \left[X_i,\left[X_i,Y\right]\right]\right)(x_t^\eps)\dd t\;.
  \end{equation*}
\end{lemma}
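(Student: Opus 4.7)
The plan is to apply the It\^o product rule to $v_t^\eps Y(x_t^\eps)$, combining the SDE for $v_t^\eps$ from Section~\ref{section3dot1}, the It\^o expansion of $Y(x_t^\eps)$, and the cross-variation between these two processes.

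First, applying It\^o's formula componentwise to the smooth function $Y$ along $(x_t^\eps)_{t\in[0,1]}$, the second-order correction produces a term of the form $\frac{\eps}{2}\sum_i\sum_{j,l} X_i^j X_i^l \partial_j\partial_l Y^k$. Rewriting this as $\nabla_{X_i}(\nabla_{X_i}Y) - \nabla_{\nabla_{X_i}X_i}Y$, the spurious $\nabla_{X_i}X_i$ piece cancels precisely against the corresponding term inside $\underline{X}_0 = X_0 + \frac{1}{2}\sum_i \nabla_{X_i}X_i$, leaving the clean identity
\begin{equation*}
  \db Y(x_t^\eps) = \sqrt{\eps}\sum_{i=1}^m \nabla_{X_i} Y(x_t^\eps)\dd B_t^i + \eps\, \lo Y(x_t^\eps)\dd t
\end{equation*}
with $\lo$ acting componentwise. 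This just reflects the fact that $\eps\lo$ is the generator of $(x_t^\eps)$.

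Next, I would combine this with the given SDE for $v_t^\eps$ via the It\^o product rule, producing three contributions: $(\db v_t^\eps) Y(x_t^\eps)$, $v_t^\eps\, \db Y(x_t^\eps)$, and the cross-variation between the martingale parts. The martingale pieces combine as $\sqrt{\eps}\sum_i v_t^\eps(\nabla_{X_i}Y - \nabla_Y X_i)\dd B_t^i$, which equals $\sqrt{\eps}\sum_i v_t^\eps[X_i,Y]\dd B_t^i$ by the torsion-free property of the Euclidean connection. In the drift, the $X_0$ contributions similarly combine as $\eps\, v_t^\eps(\nabla_{X_0}Y - \nabla_Y X_0) = \eps\, v_t^\eps[X_0,Y]$.

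The main technical point is to show that the remaining $O(\eps)$ drift terms involving $X_i$---the It\^o correction $\sum_i(\nabla X_i)^2$ in $\db v_t^\eps$, the $\frac{1}{2}\sum_i \nabla_Y(\nabla_{X_i}X_i)$ piece from $\nabla\underline{X}_0$, the $\frac{1}{2}\sum_i \nabla_{X_i}(\nabla_{X_i}Y)$ from the It\^o correction in $\db Y(x_t^\eps)$, and the cross-variation contribution $-\sum_i v_t^\eps \nabla_{\nabla_{X_i}Y}X_i$---add up to $\frac{1}{2}\sum_i v_t^\eps[X_i,[X_i,Y]]$. I would handle this by expanding the iterated bracket via $[X_i,[X_i,Y]] = \nabla_{X_i}[X_i,Y] - \nabla_{[X_i,Y]}X_i$ and applying the flatness identity $\nabla_{X_i}\nabla_Y X_i - \nabla_Y\nabla_{X_i}X_i = \nabla_{[X_i,Y]}X_i$ of the Euclidean connection to rearrange; direct matching then yields the claimed form. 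The bookkeeping across these four sources of terms, and identifying which combination of covariant derivatives reassembles into an iterated Lie bracket, is the real substance of the proof.
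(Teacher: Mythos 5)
Your argument is correct, but it takes a genuinely different route from the paper. The paper proves this identity by switching to the Stratonovich setting: it writes the Stratonovich SDEs for $(x_t^\eps)$ and $(v_t^\eps)$, applies the classical product rule (which holds without correction in Stratonovich calculus) to obtain
\begin{equation*}
\stb(v_t^\eps Y(x_t^\eps))=\sum_{i=1}^m\sqrt{\eps}\,v_t^\eps[X_i,Y](x_t^\eps)\std B_t^i+\eps\,v_t^\eps[X_0,Y](x_t^\eps)\dd t\;,
\end{equation*}
so the Lie bracket $[X_i,Y]=\nabla_{X_i}Y-\nabla_Y X_i$ materialises in one line, and then converts back to It\^o, where a single quadratic-covariation computation $\db[\sqrt{\eps}\,v^\eps[X_i,Y](x^\eps),B^i]_t=\eps\,v_t^\eps[X_i,[X_i,Y]](x_t^\eps)\dd t$ produces the iterated bracket. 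You instead stay entirely in the It\^o setting: the martingale parts pair up into $[X_i,Y]$ via torsion-freeness just as in the paper, but the $O(\eps)$ drift contributions are scattered across four sources (the It\^o correction in $\db v_t^\eps$, the $\nabla_Y\underline{X}_0$ term, the It\^o correction in $\db Y(x_t^\eps)$, and the cross-variation), and you reassemble them into $\tfrac{1}{2}\sum_i[X_i,[X_i,Y]]$ using the flatness identity $\nabla_{X_i}\nabla_Y X_i-\nabla_Y\nabla_{X_i}X_i=\nabla_{[X_i,Y]}X_i$. I checked the bookkeeping and it does close up exactly as you claim. The trade-off is that the Stratonovich route makes the bracket structure appear automatically, keeping the computation short and transparent, whereas your route avoids the Stratonovich detour but requires a more delicate cancellation argument and an explicit invocation of zero curvature; the latter is implicit in the paper's covariation computation but never needs to be named.
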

\begin{proof}
  To prove this identity, we switch to the Stratonovich setting.
  The system of Stratonovich stochastic differential equations
  satisfied by the processes $(x_t^\eps)_{t\in[0,1]}$ and
  $(v_t^\eps)_{t\in[0,1]}$ is
  \begin{align*}
    \stb x_t^\eps &= \sum_{i=1}^m\sqrt{\eps} X_i(x_t^\eps)\std B_t^i + \eps
    X_0(x_t^\eps)\dd t\;,\quad x_0^\eps=0\\
    \stb v_t^\eps &= -\sum_{i=1}^m\sqrt{\eps}v_t^\eps \nabla
    X_i(x_t^\eps)\std B_t^i - \eps
    v_t^\eps\nabla X_0(x_t^\eps)\dd t\;,\quad v_0^\eps=I.
  \end{align*}
  By the product rule, we have
  \begin{equation*}
    \stb(v_t^\eps Y(x_t^\eps))
    =(\stb v_t^\eps)Y(x_t^\eps)+
     v_t^\eps\nabla Y(x_t^\eps)\std x_t^\eps\;.
  \end{equation*}
  Using
  \begin{equation*}
    (\stb v_t^\eps)Y(x_t^\eps)=
      -\sum_{i=1}^m\sqrt{\eps}v_t^\eps
       \nabla X_i(x_t^\eps)Y(x_t^\eps)\std B_t^i
      -\eps v_t^\eps\nabla X_0(x_t^\eps)Y(x_t^\eps)\dd t 
  \end{equation*}
  as well as
  \begin{equation*}
    v_t^\eps\nabla Y(x_t^\eps)\std x_t^\eps=
      \sum_{i=1}^m\sqrt{\eps} v_t^\eps\nabla
      Y(x_t^\eps)X_i(x_t^\eps)\std B_t^i +
      \eps v_t^\eps\nabla Y(x_t^\eps)X_0(x_t^\eps)\dd t
  \end{equation*}
  yields the identity
  \begin{equation*}
    \stb(v_t^\eps Y(x_t^\eps))=
    \sum_{i=1}^m\sqrt{\eps} v_t^\eps [X_i,Y](x_t^\eps)\std B_t^i+
    \eps v_t^\eps[X_0,Y](x_t^\eps)\dd t\;.
  \end{equation*}
  It remains to change back to the It\^o setting. We compute that,
  for $i\in\{1,\dots,m\}$,
  \begin{align*}
    &\db\left[\sqrt{\eps} v^\eps
      [X_i,Y](x^\eps),B^i\right]_t\\
    &\quad=\sum_{j=1}^m\eps v_t^\eps \nabla [X_i,Y](x_t^\eps)
      X_j(x_t^\eps)\dd[B^j,B^i]_t
      -\sum_{j=1}^m\eps v_t^\eps \nabla X_j(x_t^\eps)
      [X_i,Y](x_t^\eps)\dd[B^j,B^i]_t\\
    &\quad=\eps v_t^\eps \nabla [X_i,Y](x_t^\eps)
      X_i(x_t^\eps)\dd t
      -\eps v_t^\eps \nabla X_i(x_t^\eps)
      [X_i,Y](x_t^\eps)\dd t\\
    &\quad= \eps v_t^\eps [X_i,[X_i,Y]](x_t^\eps)\dd t
  \end{align*}
  and the claimed result follows.
\end{proof}
The next lemma, which is enough for our purposes,
does not provide an explicit expression for the
leading-order terms of $\tilde c_1^\eps$. However, its proof shows how
one could recursively obtain these expressions if one wishes to do so. 
To simplify notations, we introduce $(B_t^0)_{t\in[0,1]}$ with $B_t^0=t$.
\begin{lemma}\label{lem:leadterm}
  For every $n\in\{1,\dots,N\}$,
  there are finite collections of vector fields
  \begin{align*}
    \mathcal{B}_n=
    \left\{Y_{j_1,\dots,j_k}^{(n,i)}\colon 1\leq k\leq n,
      0\leq j_1,\dots,j_k\leq m,1\leq i\leq m\right\}&\subset C_{n+1}
    \quad\mbox{and}\\
    \tilde{\mathcal{B}}_n=
    \left\{\tilde Y_{j_1,\dots,j_k}^{(n,i)}\colon 1\leq k\leq n,
      0\leq j_1,\dots,j_k\leq m,1\leq i\leq m\right\}&\subset C_{n+2}
  \end{align*}
  such that, for all $u\in C_n(0)^\perp$ and all $i\in\{1,\dots,m\}$,
  we have that, for all $\eps>0$,
  \begin{align*}
    &\left\langle u,\eps^{-n/2} v_t^\eps X_i(x_t^\eps)\right\rangle\\
    &\quad=\left\langle u,
    \sum_{k=1}^n\sum_{j_1,\dots,j_k=0}^m
    \int_0^t\int_0^{t_2}\dots\int_0^{t_k}v_s^\eps
    \left(Y_{j_1,\dots,j_k}^{(n,i)}+
      \sqrt{\eps}\,\tilde Y_{j_1,\dots,j_k}^{(n,i)}\right)(x_s^\eps)
    \dd B_s^{j_k}\dd B_{t_k}^{j_{k-1}}\dots\dd B_{t_2}^{j_1}\right\rangle\;.
  \end{align*}
\end{lemma}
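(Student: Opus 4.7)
\begin{proof2}[Proof plan.]
The natural strategy is induction on $n$, with Lemma~\ref{lem:vXid} as the engine that converts each $v_s^\eps Z(x_s^\eps)$ into an iterated integral one level deeper. The vanishing of the constant terms—crucial at each step—will come from the filtration hypothesis $Y^{(n,i)}\in C_{n+1}$ together with $u\in C_{n+1}(0)^\perp\subset C_n(0)^\perp$.

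For the base case $n=1$, I would apply Lemma~\ref{lem:vXid} to $Y=X_i$ and integrate, obtaining
\begin{equation*}
  v_t^\eps X_i(x_t^\eps)=X_i(0)+\sum_{j=1}^m\sqrt{\eps}\int_0^t v_s^\eps[X_j,X_i](x_s^\eps)\dd B_s^j+\eps\int_0^t v_s^\eps\left([X_0,X_i]+\tfrac12\sum_{j=1}^m[X_j,[X_j,X_i]]\right)(x_s^\eps)\dd s\;.
\end{equation*}
Since $X_i\in C_1$, pairing with $u\in C_1(0)^\perp$ kills the $X_i(0)$ term, so after dividing by $\sqrt{\eps}$ we read off $Y^{(1,i)}_j=[X_j,X_i]\in C_2$ for $j\in\{1,\dots,m\}$ and $\tilde Y^{(1,i)}_0=[X_0,X_i]+\tfrac12\sum_l[X_l,[X_l,X_i]]\in C_3$, with all other $Y$'s and $\tilde Y$'s taken to be zero.

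For the inductive step, assume the level-$n$ formula and take $u\in C_{n+1}(0)^\perp$. I would expand each innermost factor $v_s^\eps Y^{(n,i)}_{j_1,\dots,j_k}(x_s^\eps)$ by Lemma~\ref{lem:vXid}. The resulting constant $Y^{(n,i)}_{j_1,\dots,j_k}(0)$ lies in $C_{n+1}(0)$ and therefore vanishes against $u$, so every surviving term acquires an extra $\sqrt{\eps}$ (from a new Brownian integral) or an extra $\eps$ (from a new time integral). Multiplying the whole expression by $\eps^{-1/2}$ then redistributes as follows: the pre-existing $\sqrt{\eps}\,\tilde Y^{(n,i)}_{j_1,\dots,j_k}$ terms at depth $k\leq n$ shed their prefactor and become the new $Y^{(n+1,i)}_{j_1,\dots,j_k}$ (still in $C_{n+2}$ as required); the Brownian expansions contribute $Y^{(n+1,i)}_{j_1,\dots,j_k,l}=[X_l,Y^{(n,i)}_{j_1,\dots,j_k}]\in C_{n+2}$ at depth $k+1$; and the time expansions contribute, with prefactor $\sqrt{\eps}$, the new $\tilde Y^{(n+1,i)}_{j_1,\dots,j_k,0}=[X_0,Y^{(n,i)}_{j_1,\dots,j_k}]+\tfrac12\sum_l[X_l,[X_l,Y^{(n,i)}_{j_1,\dots,j_k}]]\in C_{n+3}$ at depth $k+1$. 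The filtration bookkeeping is consistent because bracketing with one of the $X_l$ raises the $C_\bullet$ index by exactly one.

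The main obstacle I anticipate is the combinatorial bookkeeping at the inductive step: correctly unifying, at each depth of the level-$(n+1)$ expansion, the contribution inherited from the level-$n$ $\tilde Y$ (which lives at the same depth but loses its $\sqrt{\eps}$) with the genuinely new bracket produced by expanding the level-$n$ $Y$ (which lives one depth deeper), and verifying that these respect the $C_{n+2}$ versus $C_{n+3}$ classification. A minor technical point is that $X_0$ is only assumed to lie pointwise in the span of $X_1,\dots,X_m$ rather than being an $\R$-linear combination; this is harmless here because the $[X_0,\cdot]$ term is always accompanied by the double-bracket $\tfrac12\sum_l[X_l,[X_l,\cdot]]$ coming from the Itô–Stratonovich correction, and what matters for the subsequent use of the lemma is the pointwise membership of these brackets in the appropriate $C_\bullet(y)$, which follows from the smoothness of the coefficients writing $X_0$ in terms of $X_1,\dots,X_m$.
\end{proof2}
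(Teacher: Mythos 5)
Your proposal is correct and follows the same inductive argument as the paper: expand the innermost $v_s^\eps Y^{(n,i)}_{j_1,\dots,j_k}(x_s^\eps)$ via Lemma~\ref{lem:vXid}, kill the constant $Y^{(n,i)}_{j_1,\dots,j_k}(0)$ against $u\in C_{n+1}(0)^\perp$ using property~(\ref{cond2sec3}) of the adapted chart, and redistribute the remaining $\sqrt\eps$- and $\eps$-weighted iterated integrals into the level-$(n+1)$ collections exactly as you describe. On your closing remark about $X_0$: you are right that the $C_\bullet$-membership claims involving $[X_0,\cdot]$ should be read pointwise at $0$, but this has nothing to do with being accompanied by the double-bracket term; it simply follows from writing $X_0=\sum_l\alpha_l X_l$ and noting $[X_0,Y](0)=\sum_l\bigl(\alpha_l(0)[X_l,Y](0)-(Y\alpha_l)(0)X_l(0)\bigr)$, which lies in $C_{n+1}(0)$ whenever $Y\in C_n$.
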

\begin{proof}
  We prove this result iteratively over $n$. For all $u\in
  C_1(0)^\perp$, we have $\langle u, X_i(0)\rangle=0$
  because $C_1(0)=\operatorname{span}\{X_1(0),\dots,X_m(0)\}$.
  From Lemma~\ref{lem:vXid}, it then follows that
  \begin{align*}
    &\left\langle u,\eps^{-1/2}v_t^\eps X_i(x_t^\eps)\right\rangle\\
    &\quad=\left\langle u,\sum_{j=1}^m\int_0^t
    v_s^\eps[X_j,X_i](x_s^\eps)\dd B_s^j+\int_0^t\sqrt{\eps} v_s^\eps\left(
      [X_0,X_i]+\frac{1}{2}\sum_{j=1}^m[X_j,[X_j,X_i]]\right)(x_s^\eps)\dd
    s \right\rangle\;.
  \end{align*}
  This gives us the claimed result for $n=1$ with
  \begin{align*}
    Y_j^{(1,i)}&=
    \begin{cases}
      0 & \mbox{if } j=0 \\
      [X_j,X_i]& \mbox{if } 1\leq j\leq m
    \end{cases}
    \qquad\mbox{and}\\
    \tilde Y_j^{(1,i)}&=
    \begin{cases}
      [X_0,X_i]+\frac{1}{2}\sum_{l=1}^m[X_l,[X_l,X_i]] & \mbox{if } j=0 \\
      0 & \mbox{otherwise}
    \end{cases}\;.
  \end{align*}
  Let us now assume the result to be true for $n-1$.
  Due to $C_{n}(0)^\perp\subset C_{n-1}(0)^\perp$,
  the corresponding identity also holds for all
  $u\in C_{n}(0)^\perp$. Using
  Lemma~\ref{lem:vXid}, we obtain that
  \begin{align*}
    v_s^\eps Y_{j_1,\dots,j_k}^{(n-1,i)}(x_s^\eps)=
    Y_{j_1,\dots,j_k}^{(n-1,i)}(0)&+
    \sum_{j=1}^m\int_0^s\sqrt{\eps}
    v_r^\eps\left[X_j,Y_{j_1,\dots,j_k}^{(n-1,i)}\right](x_r^\eps)\dd B_r^j\\
    &+\int_0^s\eps v_r^\eps\left(
      \left[X_0,Y_{j_1,\dots,j_k}^{(n-1,i)}\right]+
      \frac{1}{2}\sum_{j=1}^m\left[X_j,
        \left[X_j,Y_{j_1,\dots,j_k}^{(n-1,i)}\right]\right]\right)
    (x_r^\eps)\dd r\;.
  \end{align*}
  Note that $Y_{j_1,\dots,j_k}^{(n-1,i)}\in C_{n}$
  implies
  $\langle u, Y_{j_1,\dots,j_k}^{(n-1,i)}(0)\rangle=0$ for
  all $u\in C_n(0)^\perp$. We further observe that
  \begin{align*}
    \left[X_j,Y_{j_1,\dots,j_k}^{(n-1,i)}\right],
    \tilde Y_{j_1,\dots,j_k}^{(n-1,i)}&\in C_{n+1}\quad\mbox{as well as}\\
    \left[X_0,Y_{j_1,\dots,j_k}^{(n-1,i)}\right]+
      \frac{1}{2}\sum_{j=1}^m\left[X_j,
        \left[X_j,Y_{j_1,\dots,j_k}^{(n-1,i)}\right]\right]&\in C_{n+2}
  \end{align*}
  and collecting terms shows that the claimed
  result is also true for $n$.
\end{proof}
These expressions allow us to characterise the rescaled Malliavin
covariance matrix $\tilde c_1^\eps$ because,
for all $n\in\{0,1,\dots,N-1\}$ and all
$u\in C_{n+1}(0)\cap C_{n}(0)^\perp$, we have
\begin{equation}\label{eq:exp4MCM}
  \left\langle u,\tilde c_1^\eps u\right\rangle=
  \sum_{i=1}^m\int_0^1\left\langle u,\eps^{-n/2}
    v_t^\eps X_i(x_t^\eps)\right\rangle^2\dd t\;.
\end{equation}
By the convergence result (\ref{convofMCM}), it follows that, for
$u\in C_{1}(0)$,
\begin{equation*}
  \left\langle u,\tilde c_1 u\right\rangle=
  \lim_{\eps\to 0}\left\langle u,\tilde c_1^\eps u\right\rangle=
  \sum_{i=1}^m\int_0^1\left\langle u, X_i(0)\right\rangle^2\dd t
\end{equation*}
and from Lemma~\ref{lem:leadterm}, we deduce that, for
all $n\in\{1,\dots,N-1\}$ and all $u\in C_{n+1}(0)\cap C_{n}(0)^\perp$,
\begin{equation}\label{doublestar}
  \left\langle u,\tilde c_1 u\right\rangle=
  \sum_{i=1}^m\int_0^1\left\langle u,
    \sum_{k=1}^n\sum_{j_1,\dots,j_k=0}^m
    \int_0^t\int_0^{t_2}\dots\int_0^{t_k}
    Y_{j_1,\dots,j_k}^{(n,i)}(0)
    \dd B_s^{j_k}\dd B_{t_k}^{j_{k-1}}\dots\dd
    B_{t_2}^{j_1}\right\rangle^2\dd t\;,
\end{equation}
which describes the limiting Malliavin covariance matrix $\tilde c_1$
uniquely.

\subsection{Uniform non-degeneracy of the rescaled
  Malliavin covariance matrices}
By definition, the Malliavin covariance matrices $\tilde c_1^\eps$ and
$\tilde c_1$ are symmetric tensors. Therefore, their matrix representations are
symmetric in any basis and we can think of them as symmetric matrices.
Let $\mineig^\eps$ and $\mineig$ denote the minimal eigenvalues of
$\tilde c_1^\eps$ and $\tilde c_1$, respectively.
As we frequently use
the integrals from
Lemma~\ref{lem:leadterm}, it is convenient to consider the
stochastic processes
$(I_t^{(n,i),+})_{t\in[0,1]}$,
$(I_t^{(n,i),-})_{t\in[0,1]}$ and
$(\tilde I_t^{(n,i)})_{t\in[0,1]}$ given by
\begin{align*}
  I_t^{(n,i),+}&=
  \sum_{k=1}^n\sum_{j_1,\dots,j_k=0}^m
  \int_0^t\int_0^{t_2}\dots\int_0^{t_k}\left(v_s^\eps
  Y_{j_1,\dots,j_k}^{(n,i)}(x_s^\eps)+Y_{j_1,\dots,j_k}^{(n,i)}(0)\right)
  \dd B_s^{j_k}\dd B_{t_k}^{j_{k-1}}\dots\dd B_{t_2}^{j_1}\;,\\
  I_t^{(n,i),-}&=
  \sum_{k=1}^n\sum_{j_1,\dots,j_k=0}^m
  \int_0^t\int_0^{t_2}\dots\int_0^{t_k}\left(v_s^\eps
  Y_{j_1,\dots,j_k}^{(n,i)}(x_s^\eps)-Y_{j_1,\dots,j_k}^{(n,i)}(0)\right)
  \dd B_s^{j_k}\dd B_{t_k}^{j_{k-1}}\dots\dd B_{t_2}^{j_1}\;,
  \enspace\mbox{and}\\
  \tilde I_t^{(n,i)}&=
  \sum_{k=1}^n\sum_{j_1,\dots,j_k=0}^m
  \int_0^t\int_0^{t_2}\dots\int_0^{t_k}v_s^\eps
  \tilde Y_{j_1,\dots,j_k}^{(n,i)}(x_s^\eps)
  \dd B_s^{j_k}\dd B_{t_k}^{j_{k-1}}\dots\dd B_{t_2}^{j_1}\;.
\end{align*}
For $\alpha,\beta,\gamma,\delta>0$, define subspaces of the path
space $\{w\in C([0,1],\R^m)\colon w_0=0\}$ by
\begin{align*}
  \Omega^1(\alpha)&=\{\mineig\geq 2\alpha\}\;,\\
  \Omega_\eps^2(\beta,\gamma)&=
  \left\{\sup_{0\leq t\leq
      1}\left|I_t^{(n,i),+}\right|\leq\beta^{-1}\;,\enspace
    \sup_{0\leq t\leq 1}\left|\tilde I_t^{(n,i)}\right|\leq\gamma^{-1}
    \colon 1\leq i\leq m, 1\leq n\leq N\right\}\;,
  \enspace\mbox{and}\\
  \Omega_\eps^3(\delta)&=
  \left\{\sup_{0\leq t\leq 1}|x_t^\eps|\leq\delta\;,\enspace
    \sup_{0\leq t\leq 1}|v_t^\eps-I|\leq\delta\right\}\cup
    \left\{\sup_{0\leq t\leq 1}\left|I_t^{(n,i),-}\right|\leq\delta
    \colon 1\leq i\leq m, 1\leq n\leq N\right\}\;.
\end{align*}
Note that the events $\Omega_\eps^2(\beta,\gamma)$ and
$\Omega_\eps^3(\delta)$ depend on $\eps$ as the processes
$(I_t^{(n,i),+})_{t\in[0,1]}$,
$(I_t^{(n,i),-})_{t\in[0,1]}$ and
$(\tilde I_t^{(n,i)})_{t\in[0,1]}$ depend on $\eps$.
We show that, for suitable choices of
$\alpha,\beta,\gamma$ and $\delta$,
the rescaled Malliavin covariance matrices $\tilde c_1^\eps$
behave nicely on the set
\begin{equation*}
  \Omega(\alpha,\beta,\gamma,\delta,\eps)=
  \Omega^1(\alpha)\cap \Omega_\eps^2(\beta,\gamma)\cap \Omega_\eps^3(\delta)
\end{equation*}
and that
its complement is a set of small probability in the limit $\eps\to 0$.
As we are only interested in small values of
$\alpha,\beta,\gamma,\delta$ and $\eps$,
we may make the non-restrictive assumption that
$\alpha,\beta,\gamma,\delta,\eps<1$.
\begin{lemma}\label{lem:eigest}
  There exist positive constants $\chi$ and $\kappa$,
  which do not depend on $\eps$, such that if
  \begin{equation*}
    \chi\eps^{1/6}\leq\alpha\;,
    \qquad\beta=\gamma=\alpha
    \qquad\mbox{and}\qquad\delta=\kappa\alpha^2
  \end{equation*}
  then, on $\Omega(\alpha,\beta,\gamma,\delta,\eps)$,
  it holds true that
  \begin{equation*}
    \mineig^\eps\geq\frac{1}{2}\mineig\;.
  \end{equation*}
\end{lemma}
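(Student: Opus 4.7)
The plan is to view the good set $\Omega(\alpha,\alpha,\alpha,\kappa\alpha^2)$ as the event on which $\tilde c_1^\eps$ is an operator-norm perturbation of $\tilde c_1$ of size at most $\alpha$, after which the spectral lower bound $\mineig \geq 2\alpha$ enforced by $\Omega^1(\alpha)$ immediately gives $\mineig^\eps \geq \mineig/2$.

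I would start by decomposing any $u \in \R^d$ along the graded splitting $\R^d = \bigoplus_{l=1}^N V_l$ with $V_l = C_l(0) \cap C_{l-1}(0)^\perp$, writing $u = \sum_l u_l$. Because $\delta_\eps$ acts as multiplication by $\eps^{l/2}$ on $V_l$, the expression~(\ref{eq:rescaledMCM}) together with Lemma~\ref{lem:leadterm} rewrites
\begin{equation*}
\langle u, \tilde c_1^\eps u\rangle = \sum_{i=1}^m \int_0^1 \Bigl(\sum_{l=1}^N \langle u_l, F_t^{(l,i,\eps)}\rangle\Bigr)^2 \dd t, \qquad \langle u, \tilde c_1 u\rangle = \sum_{i=1}^m \int_0^1 \Bigl(\sum_{l=1}^N \langle u_l, J_t^{(l,i)}\rangle\Bigr)^2 \dd t,
\end{equation*}
where for $l \geq 2$ I set $J_t^{(l,i)} = (I_t^{(l-1,i),+} - I_t^{(l-1,i),-})/2$ and $F_t^{(l,i,\eps)} = J_t^{(l,i)} + I_t^{(l-1,i),-} + \sqrt{\eps}\,\tilde I_t^{(l-1,i)}$, while for $l = 1$ I set $F_t^{(1,i,\eps)} = v_t^\eps X_i(x_t^\eps)$ and $J_t^{(1,i)} = X_i(0)$.

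On $\Omega(\alpha,\alpha,\alpha,\kappa\alpha^2)$ the defining bounds give $|F_t^{(l,i,\eps)}|, |J_t^{(l,i)}| \leq C/\alpha$ uniformly in $(t,l,i)$, while the same bounds together with the boundedness and Lipschitz continuity of the $X_i$ in the $l=1$ case give $|F_t^{(l,i,\eps)} - J_t^{(l,i)}| \leq C(\delta + \sqrt{\eps}/\alpha)$. Factoring the difference of squares as $A^2 - B^2 = (A-B)(A+B)$ and applying Cauchy--Schwarz in the index $l$ then yields
\begin{equation*}
|\langle u, (\tilde c_1^\eps - \tilde c_1)u\rangle| \leq C_1 \cdot (\delta + \sqrt{\eps}/\alpha)/\alpha \cdot |u|^2 = C_1(\kappa\alpha + \sqrt{\eps}/\alpha^2)|u|^2,
\end{equation*}
after inserting $\delta = \kappa\alpha^2$.

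The right-hand side is at most $\alpha|u|^2 \leq \tfrac{1}{2}\mineig|u|^2$ provided $\kappa \leq 1/(2C_1)$ and $\alpha^3 \geq 2C_1\sqrt{\eps}$, so the choices $\kappa = 1/(2C_1)$ and $\chi = (2C_1)^{1/3}$ give $\langle u, \tilde c_1^\eps u\rangle \geq \tfrac{1}{2}\mineig|u|^2$ for every $u$, as required. The main obstacle is the simultaneous double estimate on $F$ and $J$: the deterministic contribution $\delta$ to $|F - J|$ comes from the bounds on $|x^\eps|$, $|v^\eps - I|$ and $|I^-|$ in $\Omega^3$, whereas the quantum correction $\sqrt{\eps}/\alpha$ comes from the bound on $|\tilde I|$ in $\Omega^2$; once these are multiplied by the common size $1/\alpha$ of $F$ and $J$ and then required to be dominated by $\alpha$, the balance $\delta/\alpha \sim \sqrt{\eps}/\alpha^2 \sim \alpha$ is forced, and this is precisely what produces both the scaling $\delta = \kappa\alpha^2$ and the exponent $\eps^{1/6}$.
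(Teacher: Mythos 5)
Your proposal is correct and follows essentially the same strategy as the paper's proof: reduce to bounding the perturbation of the quadratic form along the graded decomposition of $\R^d$, use the good-event bounds on $I^{\pm}$, $\tilde I$ and on the vector fields $X_i$ to control $\tilde c_1^\eps - \tilde c_1$, invoke the Rayleigh--Ritz/Min--Max characterisation together with $\mineig \geq 2\alpha$, and balance the resulting terms to force $\delta = \kappa\alpha^2$ and the exponent $\eps^{1/6}$. The only difference is organisational: you collapse the paper's separate estimates for the diagonal blocks $C_{n+1}(0)\cap C_n(0)^\perp$ and the cross-terms into a single difference-of-squares plus Cauchy--Schwarz bound over the index $l$, which gives the same final inequality $|\langle u, (\tilde c_1^\eps - \tilde c_1)u\rangle| \leq C_1(\kappa\alpha + \sqrt{\eps}/\alpha^2)|u|^2$ that the paper assembles piecewise.
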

\begin{proof}
  Throughout, we shall assume that we are on the event
  $\Omega(\alpha,\beta,\gamma,\delta,\eps)$. Let
  \begin{equation*}
    R^\eps(u)=\frac{\left\langle u,\tilde c_1^\eps u\right\rangle}
      {\langle u,u\rangle}
    \qquad\mbox{and}\qquad
    R(u)=\frac{\left\langle u,\tilde c_1 u\right\rangle}
      {\langle u,u\rangle}
  \end{equation*}
  be the Rayleigh-Ritz quotients of the rescaled Malliavin covariance
  matrix $\tilde c_1^\eps$
  and of the limiting Malliavin covariance matrix $\tilde c_1$,
  respectively. As a consequence of the Min-Max Theorem, we have
  \begin{equation*}
    \mineig^\eps=\min\{R^\eps(u)\colon u\not=0\}
    \qquad\mbox{as well as}\qquad
    \mineig=\min\{R(u)\colon u\not=0\}\;.
  \end{equation*}
  Since $\lambda_{\rm min}\geq 2\alpha$, it suffices to establish that
  $|R^\eps(u)-R(u)|\leq \alpha$ for all $u\not= 0$. Set
  \begin{equation*}
    K = \max_{1\leq i\leq m}\sup_{y\in\R^d} |X_i(y)|\;,\qquad
    L = \max_{1\leq i\leq m}\sup_{y\in\R^d} |\nabla X_i(y)|
  \end{equation*}
  and note that the global condition ensures $K,L<\infty$.
  Using the Cauchy-Schwarz inequality, we deduce that, for
  $u\in C_1(0)\setminus\{0\}$,
  \begin{align*}
    |R^\eps(u)-R(u)|&\leq
    \frac{\displaystyle\sum_{i=1}^m\int_0^1\left|
        \left\langle u,v_t^\eps X_i(x_t^\eps)\right\rangle^2-
        \left\langle u,X_i(0)\right\rangle^2
      \right|\dd t}{\langle u,u \rangle}\\
    &\leq \sum_{i=1}^m\int_0^1| v_t^\eps
     X_i(x_t^\eps)+X_i(0)| | v_t^\eps X_i(x_t^\eps) - X_i(0)|\dd t\\
    &\leq m((1+\delta)K+K)(\delta K+\delta L)\;.
  \end{align*}
  Applying Lemma~\ref{lem:leadterm}
  as well as the expressions
  (\ref{eq:exp4MCM}) and (\ref{doublestar}),
  we obtain in a similar way
  that, for all $n\in\{1,\dots,N-1\}$ and all
  non-zero $u\in C_{n+1}(0)\cap C_n(0)^\perp$,
  \begin{align*}
    |R^\eps(u)-R(u)|&\leq
    \sum_{i=1}^m\int_0^1
    \left|I_t^{(n,i),+}+\sqrt{\eps}\tilde I_t^{(n,i)}\right|
    \left|I_t^{(n,i),-}+\sqrt{\eps}\tilde I_t^{(n,i)}\right|\dd t\\
    &\leq m\left(\beta^{-1}+\sqrt{\eps}\gamma^{-1}\right)
     \left(\delta +\sqrt{\eps}\gamma^{-1}\right)\;.
  \end{align*}
  It remains to consider the cross-terms.
  For $n_1,n_2\in\{1,\dots,N-1\}$ and
  $u^1\in C_{n_1+1}(0)\cap C_{n_1}(0)^\perp$ as well as
  $u^2\in C_{n_2+1}(0)\cap C_{n_2}(0)^\perp$, we polarise
  (\ref{eq:exp4MCM}) to conclude that
  \begin{align*}
    \frac{\left\langle u^1,\tilde c_1^\eps u^2\right\rangle-
    \left\langle u^1,\tilde c_1 u^2\right\rangle}{|u^1||u^2|}&\leq
  \sum_{i=1}^m\int_0^1
  \left|\frac{I_t^{(n_1,i),+}+I_t^{(n_1,i),-}}{2}+\sqrt{\eps}
    \tilde I_t^{(n_1,i)}\right|\left|I_t^{(n_2,i),-}+\sqrt{\eps}
    \tilde I_t^{(n_2,i)}\right|\dd t\\
  &\qquad+\sum_{i=1}^m\int_0^1
  \left|I_t^{(n_1,i),-}+\sqrt{\eps}
    \tilde I_t^{(n_1,i)}\right|
  \left|\frac{I_t^{(n_2,i),+}-I_t^{(n_2,i),-}}{2}\right|\dd t\\
  &\leq m\left(\beta^{-1}+\delta+\sqrt{\eps}\gamma^{-1}\right)
  \left(\delta+\sqrt{\eps}\gamma^{-1}\right)\;.
  \end{align*}
  Similarly, if $n_1=0$ and $n_2\in\{1,\dots,N-1\}$, we see that
  \begin{equation*}
    \frac{\left\langle u^1,\tilde c_1^\eps u^2\right\rangle-
    \left\langle u^1,\tilde c_1 u^2\right\rangle}{|u^1||u^2|}\leq
    m\left((1+\delta)K\left(\delta+\sqrt{\eps}\gamma^{-1}\right) + 
    (\delta K+\delta L)\left(\frac{\beta^{-1}+\delta}{2}\right)\right)\;.
  \end{equation*}
  Writing a general non-zero $u\in\R^d$ in its orthogonal sum
  decomposition and combining all the above
  estimates gives
  \begin{equation*}
    |R^\eps(u)-R(u)|\leq
    \kappa_1\delta + \kappa_2\beta^{-1}\delta+
    \kappa_3\sqrt{\eps}\beta^{-1}\gamma^{-1}+
    \kappa_4\eps\gamma^{-2}
  \end{equation*}
  for some constants $\kappa_1,\kappa_2,\kappa_3$ and $\kappa_4$,
  which depend on $K,L$ and $m$ but which are independent of
  $\alpha,\beta,\gamma,\delta$ and $\eps$.
  If we now choose $\kappa$ and $\chi$ in such a way
  that both
  $\kappa\leq 1/(4\max\{\kappa_1,\kappa_2\})$ and
  $\chi^3\geq 4\max\{\kappa_3,\kappa_4^{1/2}\}$,
  and provided that $\chi\eps^{1/6}\leq\alpha$,
  $\beta=\gamma=\alpha$ as well as $\delta=\kappa\alpha^2$,
  then
  \begin{equation*}
    \kappa_1\delta + \kappa_2\beta^{-1}\delta+
    \kappa_3\sqrt{\eps}\beta^{-1}\gamma^{-1}+
    \kappa_4\eps\gamma^{-2}\leq
    \kappa_1\kappa\alpha^2 + \kappa_2\kappa\alpha+
    \kappa_3\chi^{-3}\alpha + \kappa_4\chi^{-6}\alpha^4
    \leq\alpha\;.
  \end{equation*}
  Since $\kappa$ and $\chi$ can always be chosen to be positive,
  the desired result follows.
\end{proof}
As a consequence of this lemma, we are able to control
$\det \left(\tilde c_1^\eps\right)^{-1}$ on the good set
$\Omega(\alpha,\beta,\gamma,\delta,\eps)$. This allows us
to prove Theorem~\ref{thm:UND}.
\begin{proof}[Proof of Theorem~\ref{thm:UND}]
  We recall that by Proposition~\ref{prop:sHlimit}, the nilpotent
  approximations $\tilde X_1,\dots,\tilde X_m$ satisfy the strong
  H\"ormander condition everywhere on $\R^d$. The proof of
  \cite[Theorem~4.2]{jamesSMC}
  then shows that
  \begin{equation}\label{dagger}
    \mineig^{-1}\in L^p(\pr)\;,\quad\mbox{for all}\quad p<\infty\;.
  \end{equation}
  By the Markov inequality,
  this integrability result implies that, for
  all $p<\infty$, there exist constants $D(p)<\infty$ such that
  \begin{equation}\label{eq:est4alpha}
    \pr\left(\Omega^1(\alpha)^c\right)\leq D(p)\alpha^p.
  \end{equation}
  Using the Burkholder-Davis-Gundy inequality and Jensen's inequality,
  we further show that, for all $p<\infty$, there are constants
  $E_1(p),E_2(p)<\infty$ such that
  \begin{equation*}
    \E\left[\sup_{0\leq t\leq 1}|x_t^\eps|^p\right]\leq
    E_1(p)\eps^{p/2}\qquad\mbox{and}\qquad
    \E\left[\sup_{0\leq t\leq 1}|v_t^\eps-I|^p\right]\leq
    E_2(p)\eps^{p/2}\;.
  \end{equation*}
  Similarly, by repeatedly applying the Burkholder-Davis-Gundy
  inequality and Jensen's inequality, we also see that, for all
  $p<\infty$ and for all
  $n\in\{1,\dots,N\}$ and $i\in\{1,\dots,m\}$,
  there exist constants $E^{(n,i)}(p)<\infty$ and
  $D^{(n,i)}(p),\tilde D^{(n,i)}(p)<\infty$ such that
  \begin{equation*}
    \E\left[\sup_{0\leq t\leq 1}\left|I_t^{(n,i),-}\right|^p\right]\leq
    E^{(n,i)}(p)\eps^{p/2}
  \end{equation*}
  as well as
  \begin{equation*}
    \E\left[\sup_{0\leq t\leq 1}\left|I_t^{(n,i),+}\right|^p\right]\leq
    D^{(n,i)}(p)
    \qquad\mbox{and}\qquad
    \E\left[\sup_{0\leq t\leq 1}\left|\tilde I_t^{(n,i)}\right|^p\right]\leq
    \tilde D^{(n,i)}(p)\;.
  \end{equation*}
  As the sets $\Omega_\eps^2(\beta,\gamma)$ and
  $\Omega_\eps^3(\delta)$ are defined by only finitely many constraints,
  the bounds established above and the Markov inequality imply that,
  for all $p<\infty$, there are constants $D(p)<\infty$ and
  $E(p)<\infty$ such that
  \begin{align}
    \label{eq:est4beta}
    \pr\left(\Omega_\eps^2(\beta,\gamma)^c\right)&\leq
    D(p)\left(\beta^p+\gamma^p\right)\qquad\mbox{and}\\
    \label{eq:est4delta}
    \pr\left(\Omega_\eps^3(\delta)^c\right)&\leq E(p)\delta^{-p}\eps^{p/2}\;.
  \end{align}
  Moreover, from the Kusuoka-Stroock estimate, cf. \cite{AKS}, as stated
  by Watanabe \cite[Theorem~3.2]{watanabeUND}, we know that
  there exist a positive integer $S$ and,
  for all $p<\infty$, constants $C(p)<\infty$
  such that, for all $\eps\in(0,1]$,
  \begin{equation*}
    \|\det(\tilde c_1^\eps)^{-1}\|_p=
    \left(\E\left[\left|\det\left(\tilde c_1^\eps\right)^{-1}
      \right|^p\right]\right)^{1/p}\leq C(p)\eps^{-S/2}\;.
  \end{equation*}
  Let us now choose $\alpha=\chi^{3/4}\eps^{1/8}$,
  $\beta=\gamma=\alpha$ and $\delta=\kappa\alpha^2$.
  We note that $\chi\eps^{1/6}=\alpha^{4/3}\leq\alpha$ and
  hence, from Lemma~\ref{lem:eigest} it follows that
  \begin{equation*}
    \mineig^\eps\geq\frac{1}{2}\mineig
  \end{equation*}
  on $\Omega(\alpha,\beta,\gamma,\delta,\eps)$.
  Thus, we have
  \begin{equation*}
    \det(\tilde c_1^\eps)^{-1}
    \ind_{\Omega(\alpha,\beta,\gamma,\delta,\eps)}
    \leq (\mineig^\eps)^{-d}
    \ind_{\Omega(\alpha,\beta,\gamma,\delta,\eps)}
    \leq 2^d\mineig^{-d}
    \ind_{\Omega(\alpha,\beta,\gamma,\delta,\eps)}
  \end{equation*}
  and therefore,
  \begin{equation*}
    \det(\tilde c_1^\eps)^{-1}\leq 2^d\mineig^{-d}+\det(\tilde c_1^\eps)^{-1}
    \left(
      \ind_{\Omega^1(\alpha)^c}+\ind_{\Omega_\eps^2(\beta,\gamma)^c}+
      \ind_{\Omega_\eps^3(\delta)^c}
    \right)\;.
  \end{equation*}
  Using the H\"older
  inequality, the Kusuoka-Stroock estimate as well as the estimates
  (\ref{eq:est4alpha}), (\ref{eq:est4beta}) and (\ref{eq:est4delta}),
  we further deduce that, for all $q,r<\infty$,
  \begin{align*}
    \|\det(\tilde c_1^\eps)^{-1}\|_p&\leq 2^d\|\lambda_{\rm min}^{-1}\|_p^d+
    C(2p)\eps^{-S/2}\left(\pr\left(\Omega^1(\alpha)^c\right)^{1/2p}+
                   \pr\left(\Omega_\eps^2(\beta,\gamma)^c\right)^{1/2p}+
                         \pr\left(\Omega_\eps^3(\delta)^c\right)^{1/2p} \right)\\
    &\leq 2^d\|\lambda_{\rm min}^{-1}\|_p^d+C(2p)\eps^{-S/2}
    \left((D(q)\alpha^q)^{1/2p}+
    \left(E(r)\delta^{-r}\eps^{r/2}\right)^{1/2p}\right).
  \end{align*}
  Hence, we would like to choose $q$ and $r$ in such a way that we can
  control both $\eps^{-S/2}\alpha^{q/2p}$ and
  $\eps^{-S/2}\delta^{-r/2p}\eps^{r/4p}$. Since $\delta=
  \kappa\alpha^2$ and $\alpha=\chi^{3/4}\eps^{1/8}$, we have
  \begin{equation*}
    \eps^{-S/2}\alpha^{q/2p}=\chi^{3q/8p}\eps^{-S/2+q/16p}
    \quad\mbox{as well as}\quad
    \eps^{-S/2}\delta^{-r/2p}\eps^{r/4p}=
    \left(\kappa\chi^{3/2}\right)^{-r/2p}\eps^{-S/2+r/8p}\;.
  \end{equation*}
  Thus, picking $q=8pS$ and $r=4pS$ ensures both terms remain
  bounded as $\eps\to 0$ and we obtain
  \begin{equation*}
    \|\det(\tilde c_1^\eps)^{-1}\|_p
    \leq 2^d\|\lambda_{\rm min}^{-1}\|_p^d+C(2p)
      \left(D(8pS,\chi)^{1/2p}+E(4pS,\kappa,\chi)^{1/2p}\right)\;.
  \end{equation*}
  This together with the integrability (\ref{dagger})
  of $\lambda_{\rm min}^{-1}$ implies the
  uniform non-degeneracy of the rescaled Malliavin covariance matrices
  $\tilde c_1^\eps$.
\end{proof}

\section{Convergence of the diffusion bridge measures}
\label{sec:convofdiffmeas}
We prove Theorem~\ref{propn:convofbridges} in
this section with
the extension to Theorem~\ref{thm:convofbridges} left to Section
\ref{pp:local}. For our analysis, we adapt
the Fourier transform argument presented in \cite{BMN} to allow for
the higher-order scaling $\delta_\eps$. As in Section~\ref{sec:UND},
we may assume that the sub-Riemannian structure $(X_1,\dots,X_m)$ has
already been pushed forward by the global diffeomorphism
$\theta\colon\R^d\to\R^d$ which is an adapted chart at $x=0$ and
which has bounded derivatives of all positive orders.

Define $T\Omega^0$
to be the set of continuous paths $v=(v_t)_{t\in[0,1]}$ in
$T_0\R^d\cong\R^d$ with $v_0=0$ and set
\begin{equation*}
  T\Omega^{0,y}=\{v\in T\Omega^0\colon v_1=y\}\;.
\end{equation*}
Let
$\tilde\mu_\eps^0$ denote the law of the rescaled process
$(\tilde x_t^\eps)_{t\in[0,1]}$ on $T\Omega^0$ and write
$q(\eps,0,\cdot)$ for the
law of $v_1$ under the measure $\tilde\mu_\eps^0$. To obtain the
rescaled diffusion bridge measures, we disintegrate $\tilde\mu_\eps^0$
uniquely, with respect to the Lebesgue measure on $\R^d$, as
\begin{equation}\label{eq:disofmestilde}
  \tilde\mu_\eps^0(\db v)=\int_{\R^d}
  \tilde\mu_\eps^{0,y}(\db v)q(\eps,0,y)\dd y\;,
\end{equation}
where $\tilde\mu_\eps^{0,y}$ is a probability measure on $T\Omega^0$
which is supported on $T\Omega^{0,y}$, and the map
$y\mapsto\tilde\mu_\eps^{0,y}$ is weakly continuous. We can think of
$\tilde\mu_\eps^{0,y}$ as the law of the process
$(\tilde x_t^\eps)_{t\in[0,1]}$ conditioned by
$\tilde x_1^\eps=y$. In particular, this construction is consistent
with our previous definition of $\tilde\mu_\eps^{0,0}$.
Similarly, write $\tilde\mu^0$ for the law of
the limiting rescaled
diffusion process $(\tilde x_t)_{t\in[0,1]}$ on $T\Omega^0$, denote
the law of $v_1$ under $\tilde\mu^0$ by $\bar q(\cdot)$ and let
$(\tilde\mu^{0,y}\colon y\in\R^d)$ be the unique family of probability
measures we obtain by disintegrating the measure $\tilde\mu^0$ as
\begin{equation}\label{eq:disofmeslimit}
  \tilde\mu^0(\db v)=\int_{\R^d}
  \tilde\mu^{0,y}(\db v)\bar q(y)\dd y\;.
\end{equation}
To keep track of the paths of the
diffusion bridges, we fix $t_1,\dots,t_k\in(0,1)$ with $t_1<\dots<t_k$
as well as a smooth function $g$ on $(\R^d)^k$ of polynomial growth and
consider the smooth cylindrical function $G$ on
$T\Omega^0$ defined by
$G(v)=g(v_{t_1},\dots,v_{t_k})$. For $y\in\R^d$ and
$\eps>0$, set
\begin{align*}
  G_\eps(y)&=q(\eps,0,y)\int_{T\Omega^{0,y}}G(v)
  \tilde\mu_\eps^{0,y}(\db v)\qquad\mbox{and}\\
  G_0(y)  &=\bar q(y)\int_{T\Omega^{0,y}}G(v)
  \tilde\mu^{0,y}(\db v)\;.
\end{align*}
Both functions are continuous integrable functions on $\R^d$ and in
particular, we can consider their Fourier transforms $\hat
G_\eps(\xi)$ and $\hat G_0(\xi)$ given by
\begin{equation*}
  \hat G_\eps(\xi)=\int_{\R^d} G_\eps(y)\e^{\im\langle\xi,y\rangle}\dd y
  \qquad\mbox{and}\qquad
  \hat G_0(\xi)=\int_{\R^d} G_0(y)\e^{\im\langle\xi,y\rangle}\dd y\;.
\end{equation*}
Using the disintegration of measure property (\ref{eq:disofmestilde}),
we deduce that
\begin{align*}
  \hat G_\eps(\xi)&=\int_{\R^d}\int_{T\Omega^{0,y}}q(\eps,0,y)
  G(v)\tilde\mu_\eps^{0,y}(\db v)
  \e^{\im\langle\xi,y\rangle}\dd y\\
  &=\int_{T\Omega^{0}} G(v)\e^{\im\langle\xi,v_1\rangle}
    \tilde\mu_\eps^0(\db v)\\
  &=\E\left[G(\tilde x^\eps)\exp\left\{
    \im\langle\xi,\tilde x_1^\eps\rangle\right\}\right]\;.
\end{align*}
Similarly, by using (\ref{eq:disofmeslimit}), we show that
\begin{equation*}
  \hat G_0(\xi)=\E\left[G(\tilde x)\exp\left\{
  \im\langle\xi,\tilde x_1\rangle\right\}\right]\;.
\end{equation*}
We recall that $\tilde x_t^\eps\to \tilde x_t$ as $\eps\to 0$ almost
surely and in $L^p$ for all $p<\infty$. Hence,
$\hat G_\eps(\xi)\to\hat G_0(\xi)$ as $\eps\to 0$ for all
$\xi\in\R^d$. To be able to use this convergence result to
make deductions about the behaviour of the functions $G_\eps$ and
$G_0$ we need $\hat G_\eps$ to be integrable uniformly in
$\eps\in(0,1]$.
This is provided by
the following lemma, which is proven at the end of the section.
\begin{lemma}\label{lem:controloverinverse}
  For all smooth cylindrical functions $G$ on $T\Omega^0$ there are
  constants $C(G)<\infty$ such that, for all $\eps\in(0,1]$ and all
  $\xi\in\R^d$, we have
  \begin{equation}\label{eq:generalGbound}
    |\hat G_\eps(\xi)|\leq\frac{C(G)}{1+|\xi|^{d+1}}\;.
  \end{equation}
  Moreover, in the case where
  $G(v)=|v_{t_1}-v_{t_2}|^4$, there exists a constant
  $C<\infty$ such that, uniformly in $t_1,t_2\in(0,1)$, we can choose
  $C(G)=C|t_1-t_2|^2$, i.e. for all $\eps\in(0,1]$ and all
  $\xi\in\R^d$,
  \begin{equation}\label{eq:4bound}
    |\hat G_\eps(\xi)|\leq\frac{C|t_1-t_2|^2}{1+|\xi|^{d+1}}\;.
  \end{equation}
\end{lemma}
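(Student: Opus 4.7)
The plan is to apply the Malliavin integration-by-parts (IBP) formula to the Wiener functional $\tilde x_1^\eps$, whose covariance has been shown to be uniformly non-degenerate in Theorem~\ref{thm:UND}. Writing $\hat G_\eps(\xi) = \E[G(\tilde x^\eps) e^{\im\langle\xi, \tilde x_1^\eps\rangle}]$ and iterating IBP with the test function $y \mapsto e^{\im\langle\xi,y\rangle}$, one obtains for each multi-index $\alpha$ a Wiener functional $H_\alpha^\eps$, a polynomial in Malliavin derivatives of $G(\tilde x^\eps)$ and of $\tilde x_1^\eps$ and in the entries of $(\tilde c_1^\eps)^{-1}$, satisfying
\begin{equation*}
(\im\xi)^\alpha \hat G_\eps(\xi) = \E\bigl[e^{\im\langle\xi, \tilde x_1^\eps\rangle} H_\alpha^\eps\bigr].
\end{equation*}
Summing over $|\alpha| \leq d+1$ reduces (\ref{eq:generalGbound}) to proving $\sup_{\eps \in (0,1]} \E[|H_\alpha^\eps|] \leq C(G)$ for each such $\alpha$.

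To obtain these uniform moment bounds, I would combine three ingredients. First, the rescaled coefficients $\sqrt{\eps}(\scale_\eps^{-1})_*X_i$ and $\eps(\scale_\eps^{-1})_*\underline X_0$ together with all their derivatives are bounded uniformly in $\eps\in(0,1]$; this is the technical content of the adapted-chart vanishing conditions together with the cascade structure of Remark~\ref{cascade}. Second, standard BDG/Gr\"onwall arguments applied to the SDEs of $\tilde x^\eps$ and its successive Malliavin variations then yield uniform $L^p$ bounds on all Malliavin derivatives of $\tilde x_t^\eps$ for $t \in [0,1]$ and $p < \infty$, as well as uniform moment bounds on the entries of $\tilde c_1^\eps$ itself through the representation (\ref{eq:rescaledMCM}). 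Third, Theorem~\ref{thm:UND} together with Cramer's rule lifts these to uniform $L^p$ bounds on the entries of $(\tilde c_1^\eps)^{-1}$. Combined with the polynomial growth of $G$ and smoothness of its partial derivatives, H\"older's inequality then delivers (\ref{eq:generalGbound}).

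For the refined bound (\ref{eq:4bound}), the key observation is that $g(y_1,y_2) = |y_1-y_2|^4$ depends only on $y_1-y_2$. Consequently, every Malliavin derivative $D^l(G \circ \tilde x^\eps)$ is, by the chain rule, a linear combination of terms
\begin{equation*}
(\pt^k g)(\tilde x_{t_1}^\eps, \tilde x_{t_2}^\eps) \prod_{j=1}^k D^{l_j}\bigl(\tilde x_{t_1}^\eps - \tilde x_{t_2}^\eps\bigr),\qquad \sum_{j=1}^k l_j = l,\quad l_j \geq 1,
\end{equation*}
and $\pt^k g$ is a polynomial of degree $4-k$ in $y_1-y_2$ (vanishing for $k > 4$). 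The Kolmogorov-type estimate $\E[|\tilde x_{t_1}^\eps - \tilde x_{t_2}^\eps|^{2p}] \leq C_p |t_1-t_2|^p$ (again a uniform BDG bound on the SDE for $\tilde x^\eps$, dominated by the weight-one components), together with the analogous increment bound $\|D^{l_j}(\tilde x_{t_1}^\eps - \tilde x_{t_2}^\eps)\|_p \leq C_p |t_1-t_2|^{1/2}$ for the Malliavin derivatives, forces every such term to scale as $|t_1-t_2|^{(4-k)/2+k/2} = |t_1-t_2|^2$; the $l=0$ term, namely $G(\tilde x^\eps)$ itself, is handled trivially by Kolmogorov. Propagating these bounds through the definition of $H_\alpha^\eps$ by H\"older yields the improved constant $C|t_1-t_2|^2$ in (\ref{eq:4bound}). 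The principal obstacle throughout is the first ingredient above: making precise the uniform-in-$\eps$ boundedness of the rescaled vector fields so that all downstream Malliavin-calculus estimates transfer uniformly to the $\eps$-family, which is exactly where the adapted-chart machinery of Section~\ref{sec:graded} earns its keep in taming coefficients that would otherwise blow up as $\eps\to 0$.
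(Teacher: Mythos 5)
Your high-level skeleton is the right one and matches the paper's: iterate an integration-by-parts in the Wiener chaos direction (the paper uses Bismut's derived-process version following \cite{BMN}, you phrase it in terms of Malliavin derivatives — these are equivalent), factor out $(\im\xi)^\alpha$, reduce to showing the IBP weights $H_\alpha^\eps$ have uniform $L^1$ bounds, and feed in the uniform non-degeneracy of $\tilde c_1^\eps$ from Theorem~\ref{thm:UND}. Your treatment of the refined bound for $G(v)=|v_{t_1}-v_{t_2}|^4$ via the chain rule and a Kolmogorov-type increment estimate is also essentially the paper's argument.

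The genuine gap is in your first ingredient, and it is not merely a matter of "making precise". The $k$-th component of $\sqrt{\eps}(\scale_\eps^{-1})_*X_i$ at $y$ is $\eps^{(1-w_k)/2}X_i^k(\scale_\eps(y))$, and the only thing the global condition gives you is $\|X_i^k\|_\infty<\infty$; for any $k$ with $w_k>1$ the global sup norm of this rescaled coefficient therefore grows like $\eps^{(1-w_k)/2}$ as $\eps\to0$. The adapted-chart vanishing conditions give $\ord(\theta_*X_i^k)\geq w_k-1$ at $0$, which controls $\eps^{(1-w_k)/2}X_i^k(\scale_\eps(y))$ only \emph{locally uniformly} in $y$, with bounds that grow polynomially in $|y|$ — not the global uniform boundedness you assert. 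Consequently, "standard BDG/Gr\"onwall applied to the SDEs of $\tilde x^\eps$" breaks down: you cannot close a Gr\"onwall loop with coefficients that are neither globally bounded nor uniformly Lipschitz. This is precisely why the paper's Lemma~\ref{lem:tildeuvest} does \emph{not} estimate the rescaled SDE directly. It instead introduces the interpolation family $x^\eps_t(\theta)$ solving the \emph{unrescaled} SDE with $\theta\sqrt\eps,\theta^2\eps$ in place of $\sqrt\eps,\eps$ (coefficients genuinely bounded by the global condition), uses the adapted-chart identities to show that $\langle u,x_t^{\eps,(n)}(0)\rangle=0$ for $u\in C_n(0)^\perp$, and then recovers $\langle u,\tilde x_t^\eps\rangle$ as an iterated $\theta$-integral of $x_t^{\eps,(n)}(\theta)$; the uniform moment bounds then come from the graded Lipschitz framework of Norris \cite{jamesSMC} applied to the vector $(x^\eps(\theta),x^{\eps,(1)}(\theta),\dots)$, with constants uniform in $(\theta,\eps)$. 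Without this detour (or an equivalent device exploiting the cascade structure in a uniformly graded Lipschitz way), the uniform $L^p$ bounds on $\tilde x_t^\eps$, $\tilde v_t^\eps$ and their Malliavin derivatives — and hence on the IBP weights — do not follow, and your argument stalls exactly where you flag the "principal obstacle".
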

With this setup, we can prove Theorem~\ref{propn:convofbridges}.
\begin{proof}[Proof of Theorem~\ref{propn:convofbridges}]
  Applying the Fourier inversion formula and using
  (\ref{eq:generalGbound}) from Lemma~\ref{lem:controloverinverse} as
  well as the dominated convergence theorem, we deduce that
  \begin{equation}\label{Gconv}
    G_\eps(0)=\frac{1}{(2\pi)^d}\int_{\R^d}\hat G_\eps(\xi)\dd\xi
    \to
    \frac{1}{(2\pi)^d}\int_{\R^d}\hat G_0(\xi)\dd\xi=G_0(0)
    \quad\mbox{as}\quad\eps\to 0\;.
  \end{equation}
  Let $Q=\sum_{n=1}^N nd_n$ be the homogeneous dimension of the
  sub-Riemannian structure $(X_1,\dots,X_m)$. Due to the change
  of variables formula, we have
  \begin{equation*}
    q(\eps,0,y)=\eps^{Q/2}p(\eps,0,\delta_\eps(y))\;,
  \end{equation*}
  where $p$ and $q$ are the Dirichlet heat kernels,
  with respect to the Lebesgue measure on $\R^d$,
  associated to the
  processes $(x_t^1)_{t\in[0,1]}$ and $(\tilde x_t^1)_{t\in[0,1]}$,
  respectively.
  From (\ref{Gconv}), it follows that
  \begin{equation}\label{fourierconv}
    \eps^{Q/2}p(\eps,0,0)\int_{T\Omega^{0,0}}G(v)
    \tilde\mu_\eps^{0,0}(\db v)\to
    \bar q(0)\int_{T\Omega^{0,0}}G(v)
    \tilde\mu^{0,0}(\db v)
    \quad\mbox{as}\quad\eps\to 0\;.
  \end{equation}
  Choosing $g\equiv 1$ shows that
  \begin{equation}\label{heatasymp}
    \eps^{Q/2}p(\eps,0,0)\to\bar q(0)
    \quad\mbox{as}\quad\eps\to 0\;,
  \end{equation}
  which agrees with the small-time heat kernel asymptotics
  established in \cite{GBAdiag} and \cite{leandre}.
  We recall that $\bar q\colon\R^d\to [0,\infty)$ is the
  density of the random variable $\tilde x_1$, where
  $(\tilde x_t)_{t\in[0,1]}$ is the limiting rescaled process with
  generator
  \begin{equation*}
    \tilde\lo=\frac{1}{2}\sum_{i=1}^m \tilde X_i^2\;.
  \end{equation*}
  By Proposition~\ref{prop:sHlimit}, the nilpotent approximations
  $\tilde X_1,\dots,\tilde X_m$ satisfy the strong H\"ormander
  condition everywhere on $\R^d$ and since $\tilde\lo$ has vanishing
  drift, the discussions in \cite{expdecayII} imply that
  $\bar q(0)>0$. Hence, we can divide
  (\ref{fourierconv}) by (\ref{heatasymp}) to obtain
  \begin{equation*}
    \int_{T\Omega^{0,0}}G(v) \tilde\mu_\eps^{0,0}(\db v)\to
    \int_{T\Omega^{0,0}}G(v) \tilde\mu^{0,0}(\db v)
    \quad\mbox{as}\quad\eps\to 0\;.
  \end{equation*}
  Thus, the finite-dimensional distributions of $\tilde\mu_\eps^{0,0}$
  converge weakly to those of $\tilde\mu^{0,0}$ and it remains to
  establish tightness to deduce the desired convergence
  result.
  Taking $G(v)=|v_{t_1}-v_{t_2}|^4$, using the Fourier
  inversion formula and the estimate (\ref{eq:4bound}) from
  Lemma~\ref{lem:controloverinverse}, we conclude that
  \begin{equation*}
    \eps^{Q/2} p(\eps,0,0)\int_{T\Omega^{0,0}} |v_{t_1}-v_{t_2}|^4\;
    \tilde\mu_\eps^{0,0}(\db v)=G_\eps(0)\leq C|t_1-t_2|^2\;.
  \end{equation*}
  From (\ref{heatasymp}) and due to $\bar q(0)>0$,
  it further follows that there
  exists a constant
  $D<\infty$ such that, for all $t_1,t_2\in(0,1)$,
  \begin{equation*}
    \sup_{\eps\in(0,1]}\int_{T\Omega^{0,0}} |v_{t_1}-v_{t_2}|^4\;
    \tilde\mu_\eps^{0,0}(\db v)\leq D|t_1-t_2|^2\;.
  \end{equation*}
  Standard arguments finally imply that the family of laws
  $(\tilde\mu_\eps^{0,0}\colon\eps\in (0,1])$ is tight on
  $T\Omega^{0,0}$ and hence, $\tilde\mu_\eps^{0,0}\to\tilde\mu^{0,0}$
  weakly on $T\Omega^{0,0}$ as $\eps\to 0$.
\end{proof}
It remains to establish Lemma~\ref{lem:controloverinverse}. The proof
closely follows \cite[Proof of Lemma~4.1]{BMN}, where the main adjustments
needed arise due to the higher-order scaling map $\delta_\eps$.
In addition to the uniform non-degeneracy of the rescaled Malliavin
covariance matrices $\tilde c_1^\eps$, which is provided by
Theorem~\ref{thm:UND}, we need the rescaled processes
$(\tilde x_t^\eps)_{t\in[0,1]}$ and $(\tilde v_t^\eps)_{t\in[0,1]}$
defined in Section~\ref{section3dot1}
to have moments of all orders bounded uniformly in $\eps\in(0,1]$.
The latter is ensured by the following lemma.
\begin{lemma}\label{lem:tildeuvest}
  There are moment estimates of all orders for the stochastic processes
  $(\tilde x_t^\eps)_{t\in[0,1]}$ and $(\tilde v_t^\eps)_{t\in[0,1]}$
  which are
  uniform in $\eps\in(0,1]$, i.e. for all $p<\infty$, we
  have
  \begin{equation*}
    \sup_{\eps\in(0,1]}
    \E\left[\sup_{0\leq t\leq 1}|\tilde x_t^\eps|^p\right]<\infty
    \qquad\mbox{and}\qquad
    \sup_{\eps\in(0,1]}
    \E\left[\sup_{0\leq t\leq 1}|\tilde v_t^\eps|^p\right]<\infty\;.
  \end{equation*}
\end{lemma}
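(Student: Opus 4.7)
The plan is to deduce both moment bounds from sharper scaling estimates on the unrescaled processes: for all $p<\infty$,
\[
\E\!\left[\sup_{t\leq 1}|x_t^{\eps,k}|^p\right]\leq C_{k,p}\,\eps^{pw_k/2}
\qquad\text{and}\qquad
\E\!\left[\sup_{t\leq 1}|(v_t^\eps)^{k,j}|^p\right]\leq C_{k,j,p}\,\eps^{p\max(w_k-w_j,0)/2}.
\]
Since $\tilde x_t^{\eps,k}=\eps^{-w_k/2}x_t^{\eps,k}$ and $(\tilde v_t^\eps)^{k,j}=\eps^{(w_j-w_k)/2}(v_t^\eps)^{k,j}$, these are equivalent to the statement of the lemma. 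I would establish them via an iterated Stratonovich--It\^o expansion whose low-depth boundary terms are annihilated by the adapted chart property.

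For $x^\eps$, I would pass to the Stratonovich SDE $\std x_t^\eps=\sqrt\eps X_i(x_t^\eps)\std B_t^i+\eps X_0(x_t^\eps)\dd t$ and iteratively apply the Stratonovich chain rule to the coordinate function $y^k$ up to depth $w_k$, obtaining
\[
x_t^{\eps,k}=\sum_{|I|=w_k}(Y_I\,y^k)(0)\,J_I(t)+R^\eps_t,
\]
where $I=(i_1,\ldots,i_{w_k})$ ranges over multi-indices with $i_s\in\{0,1,\ldots,m\}$, $Y_I$ denotes the corresponding composition of vector fields (with $Y_0=X_0$ and $Y_j=X_j$ for $j\geq 1$), and $J_I(t)$ is the associated iterated Stratonovich integral involving $\sqrt\eps\,\std B^{i_s}$ for $i_s\geq 1$ and $\eps\,\dd t$ for $i_s=0$. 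All boundary coefficients $(Y_{j_1}\cdots Y_{j_l}\,y^k)(0)$ at depths $l<w_k$ vanish by condition (ii) of the adapted chart, leaving only depth-$w_k$ contributions. Iterated applications of the Burkholder--Davis--Gundy and Jensen inequalities give $\|J_I(t)\|_{L^p}\leq C_p\,\eps^{w_k/2}$ uniformly in $\eps\in(0,1]$, while the remainder $R^\eps_t$ (of depth $w_k+1$ with smooth bounded integrands) is of order $\eps^{(w_k+1)/2}$ by the same reasoning, yielding the scaling bound for $x^\eps$.

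For $v^\eps$, viewing $(v_t^\eps)^{k,j}$ as (up to conventions) $\pt x_t^{\eps,k}/\pt x_0^j\big|_{x_0=0}$, I would differentiate the Stratonovich--It\^o expansion of $x_t^{\eps,k}(x_0)$ in the initial datum $x_0^j$ before evaluating at $0$, so that the boundary coefficients become $(\pt_j Y_I\,y^k)(0)$. Condition (i) of the adapted chart provides $Z\in C_{w_j}$ with $Z(0)=\pt_j(0)$, and the first-order identity $(\pt_j f)(0)=Z(0)\cdot\nabla f(0)=(Zf)(0)$ yields $(\pt_j Y_I\,y^k)(0)=(ZY_I\,y^k)(0)$. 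The extended form of condition (ii), namely $(D\,y^k)(0)=0$ for every $D\in\operatorname{span}\{Y_1\cdots Y_l:Y_s\in C_{i_s},\;\sum_s i_s\leq w_k-1\}$, then annihilates this coefficient whenever $w_j+|I|\leq w_k-1$. Hence, for $w_k>w_j$, $(v_t^\eps)^{k,j}$ reduces to iterated Stratonovich integrals of depth at least $w_k-w_j$, giving the required scaling. For $w_k\leq w_j$, a uniform bound follows from the standard Gronwall estimate for the linear SDE satisfied by $v^\eps$, whose coefficients $\sqrt\eps\,\nabla X_i(x^\eps)$ and $\eps\,(\nabla\underline X_0-\sum_i(\nabla X_i)^2)(x^\eps)$ are bounded uniformly in $\eps\in(0,1]$.

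The main obstacle is the combinatorial bookkeeping of the iterated Stratonovich--It\^o expansion to depth $w_k$ and the verification that every lower-depth boundary term vanishes via the adapted chart property. For $v^\eps$ there is the extra subtlety of identifying $\pt_j$ at $0$ with an element of $C_{w_j}(0)$ so as to transfer the vanishing of $(D\,y^k)(0)$ to $(\pt_j Y_I\,y^k)(0)$.
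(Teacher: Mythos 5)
Your plan for the $\tilde x^\eps$ part is a genuine alternative implementation of the paper's idea: where you run an explicit iterated Stratonovich--It\^o expansion to depth $w_k$ and invoke the adapted-chart vanishing of the boundary coefficients, the paper instead inserts a homotopy parameter $\theta$ into the SDE, considers $x_t^{\eps,(n)}(\theta)=\eps^{-n/2}(\pt/\pt\theta)^n x_t^\eps(\theta)$, proves $\langle u, x_t^{\eps,(n)}(0)\rangle=0$ for $u\in C_n(0)^\perp$, and then writes $\langle u,\tilde x_t^\eps\rangle$ as an integral Taylor remainder over $\theta$. The two arguments are morally the same (Taylor expansion plus adapted-chart cancellation), but the paper's $\theta$-trick avoids all multi-index bookkeeping and lets it close the estimate via Norris's graded-Lipschitz machinery applied to the joint system $(x^\eps(\theta),x^{\eps,(1)}(\theta),\dots,x^{\eps,(N)}(\theta))$. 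Do note that condition (ii) as stated covers only compositions of $X_1,\dots,X_m$, not compositions involving $X_0$; to kill the drift-indexed boundary terms you should either observe that $\ord(\theta_*X_0)\le 1$ (which uses $X_0(y)\in\operatorname{span}\{X_1(y),\dots,X_m(y)\}$) or expand $X_0=\sum_i\alpha_iX_i$ by Leibniz before evaluating at $0$. This is routine, but your phrase ``vanish by condition (ii)'' elides it.

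The $\tilde v^\eps$ part, however, has a real gap. In this paper $u_t^\eps$ satisfies $\db u_t^\eps=\sum_i\sqrt{\eps}\,\nabla X_i(x_t^\eps)u_t^\eps\dd B_t^i+\dots$, so $u_t^\eps=\pt x_t^\eps/\pt x_0$ is the Jacobian of the flow, while $v_t^\eps$ (the object in the lemma) satisfies the right-invariant SDE $\db v_t^\eps=-\sum_i\sqrt{\eps}\,v_t^\eps\nabla X_i(x_t^\eps)\dd B_t^i-\dots$, i.e.\ $v_t^\eps=(u_t^\eps)^{-1}$. Your identification $(v_t^\eps)^{k,j}=\pt x_t^{\eps,k}/\pt x_0^j$ therefore describes $u$, not $v$; the expansion you propose, with the boundary coefficients $(\pt_jY_I\,y^k)(0)=(ZY_I\,y^k)(0)$ killed by the extended condition (ii), gives the scaling bound for $\tilde u_t^\eps=\delta_\eps^{-1}u_t^\eps\delta_\eps$, not for $\tilde v_t^\eps$. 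To pass from $\tilde u$ to $\tilde v=(\tilde u)^{-1}$ you would additionally need to control $\det(\tilde u_t^\eps)^{-1}=\det(u_t^\eps)^{-1}$ uniformly in $\eps$ (this is fine, since $\det u_t^\eps$ is an exponential of integrals of bounded traces, but the step is absent from your proposal) and then write $\tilde v=\det(\tilde u)^{-1}\operatorname{adj}(\tilde u)$ with H\"older. The paper sidesteps this entirely by running the $\theta$-homotopy argument directly on the SDE for $v^\eps$, proving $\langle u^1, v_t^{\eps,(k)}(0)\,u^2\rangle=0$ for $k< n_1-n_2$, and then closing with the graded-Lipschitz bound for the joint system $(x^\eps(\theta),v^\eps(\theta),\dots)$. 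Either fix your identification and work with the backward/right SDE for $v^\eps$ directly, or add the inversion step; as written, the $v$ estimate does not follow.
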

\begin{proof}
  We exploit the graded structure induced by the sub-Riemannian
  structure $(X_1,\dots,X_m)$ and we make use of
  the properties of an adapted chart.
  For $\tau\in[0,1]$, consider
  the It\^o stochastic differential equation in $\R^d$
  \begin{equation*}
    \db x_t^\eps(\tau)=
    \sum_{i=1}^m\tau\sqrt{\eps} X_i(x_t^\eps(\tau))\dd B_t^i+
    \tau^2\eps\underline X_0(x_t^\eps(\tau))\dd t
    \;,\quad x_0^\eps(\tau)=0
  \end{equation*}
  and let
  $\{(x_t^\eps(\tau))_{t\in[0,1]}\colon\tau\in[0,1]\}$
  be the unique family of strong solutions which is
  almost surely jointly continuous in $\tau$ and $t$. Observe
  that $x_t^\eps(0)=0$ and $x_t^\eps(1)=x_t^\eps$ for all $t\in[0,1]$,
  almost surely. Moreover, for $n\geq 1$,
  the rescaled $n$th derivative in $\tau$
  \begin{equation*}
    x_t^{\eps,(n)}(\tau)=
    \eps^{-n/2}\left(\frac{\pt}{\pt\tau}\right)^n x_t^\eps(\tau)
  \end{equation*}
  exists for all $\tau$ and $t$, almost surely. For instance,
  $(x_t^{\eps,(1)}(\tau))_{t\in[0,1]}$ is the unique
  strong solution
  of the It\^o stochastic differential equation
  \begin{align*}
    \db x_t^{\eps,(1)}(\tau)&=
    \sum_{i=1}^m X_i(x_t^\eps(\tau))\dd B_t^i+
    2\tau\sqrt{\eps}\underline X_0(x_t^\eps(\tau))\dd t\\
    &\qquad+    
    \sum_{i=1}^m\tau\sqrt{\eps}
    \nabla X_i(x_t^\eps(\tau))x_t^{\eps,(1)}(\tau)\dd B_t^i+
    \tau^2\eps\nabla\underline
    X_0(x_t^\eps(\tau))x_t^{\eps,(1)}(\tau)\dd t
    \;,\quad x_0^{\eps,(1)}(\tau)=0\;.
  \end{align*}
  In particular, we compute that
  $x_t^{\eps,(1)}(0)=\sum_{i=1}^m X_i(0)B_t^i$.
  As $\langle u,X_i(0)\rangle=0$ for all
  $i\in\{1,\dots,m\}$ and all $u\in C_1(0)^\perp$, we deduce
  \begin{equation}\label{tildex1est}
    \left\langle u,x_t^{\eps,(1)}(0)\right\rangle=0
    \quad\mbox{for all}\quad u\in C_1(0)^\perp\;.
  \end{equation}
  By looking at the corresponding stochastic differential equation for
  $(x_t^{\eps,(2)}(\tau))_{t\in[0,1]}$, we further obtain that
  \begin{equation*}
    x_t^{\eps,(2)}(0)=
    \sum_{i=1}^m \int_0^t 2\nabla X_i(0) x_s^{\eps,(1)}(0)\dd B_s^i+
    2{\underline X}_0(0)t\;.
  \end{equation*}
  Due to (\ref{tildex1est}), the only non-zero terms in
  $\nabla X_i(0) x_s^{\eps,(1)}(0)$ are scalar multiples of
  the first $d_1$ columns of $\nabla X_i(0)$, i.e. where the
  derivative is taken along a direction lying in $C_1(0)$.
  Thus, by
  property~(\ref{cond2sec3}) of an adapted chart and since
  $X_0(0)\in\operatorname{span}\{X_1(0),\dots,X_m(0)\}$, it follows that
  \begin{equation*}
    \left\langle u,x_t^{\eps,(2)}(0)\right\rangle=0
    \quad\mbox{for all}\quad u\in C_2(0)^\perp\;.
  \end{equation*}
  In general, continuing in the same way and
  by appealing to the
  Fa\`a di Bruno formula, we prove
  iteratively that, for all $n\in\{1,\dots,N-1\}$,
  \begin{equation}\label{tildexest}
    \left\langle u,x_t^{\eps,(n)}(0)\right\rangle=0
    \quad\mbox{for all}\quad u\in C_n(0)^\perp\;.
  \end{equation}
  Besides, the stochastic process
  $(x_t^\eps(\tau),x_t^{\eps,(1)}(\tau),
  \dots,x_t^{\eps,(N)}(\tau))_{t\in[0,1]}$ is the solution of a
  stochastic differential equation with graded 
  Lipschitz coefficients in the sense of Norris \cite{jamesSMC}.
  As the coefficient bounds of the graded structure are uniform in
  $\tau\in[0,1]$ and $\eps\in(0,1]$, we obtain,
  uniformly in $\tau$ and $\eps$, moment bounds
  of all orders for $(x_t^\eps(\tau),x_t^{\eps,(1)}(\tau),
  \dots,x_t^{\eps,(N)}(\tau))_{t\in[0,1]}$.
  Finally, due to~(\ref{tildexest}) we have,
  for all $n\in\{1,\dots,N\}$ and all
  $u\in C_n(0)\cap C_{n-1}(0)^\perp$,
  \begin{equation*}
    \left\langle u,\tilde x_t^\eps\right\rangle=
    \left\langle u,\eps^{-n/2} x_t^\eps\right\rangle=
    \left\langle u,
      \int_0^1\int_0^{\tau_1}\dots\int_0^{\tau_{n-1}}x_t^{\eps,(n)}(\tau_n)
      \dd\tau_n\dd\tau_{n-1}\dots\dd\tau_1
      \right\rangle\;.
  \end{equation*}
  This together with the uniform moment bounds implies
  the claimed result that, for all $p<\infty$,
  \begin{equation*}
    \sup_{\eps\in(0,1]}
    \E\left[\sup_{0\leq t\leq 1}|\tilde x_t^\eps|^p\right]<\infty\;.
  \end{equation*}
  We proceed similarly to establish the second estimate. Let
  $\{(v_t^\eps(\tau))_{t\in[0,1]}\colon\tau\in[0,1]\}$ be the
  unique family of strong solutions to the It\^o stochastic
  differential equation in $\R^d$
  \begin{equation*}
    \db v_t^\eps(\tau)=
    -\sum_{i=1}^m\tau\sqrt{\eps}v_t^\eps(\tau)
    \nabla X_i(x_t^\eps(\tau))\dd B_t^i
    -\tau^2\eps v_t^\eps(\tau)\left(\nabla{\underline X}_0-
      \sum_{i=1}^m(\nabla X_i)^2\right)(x_t^\eps(\tau))\dd t\;,
    \quad v_0^\eps(\tau)=I
  \end{equation*}
  which is almost surely jointly continuous in $\tau$ and $t$. We
  note that $v_t^\eps(0)=I$ and $v_t^\eps(1)=v_t^\eps$
  for all $t\in[0,1]$, almost surely. For $n\geq 1$, set
  \begin{equation*}
    v_t^{\eps,(n)}(\tau)=
    \eps^{-n/2}\left(\frac{\pt}{\pt\tau}\right)^n v_t^\eps(\tau)\;,
  \end{equation*}
  which exists
  for all $\tau$ and $t$, almost surely. For
  $n_1,n_2\in\{1,\dots,N\}$ and
  $u^1\in C_{n_1}(0)\cap C_{n_1-1}(0)^\perp$
  as well as $u^2\in C_{n_2}(0)\cap C_{n_2-1}(0)^\perp$, we have
  \begin{equation*}
    \left\langle u^1,\tilde v_t^\eps u^2\right\rangle=
    \eps^{-(n_1-n_2)/2}\left\langle u^1,v_t^\eps u^2\right\rangle\;.
  \end{equation*}
  Therefore, if $n_1\leq n_2$, we obtain the bound
  $|\langle u^1,\tilde v_t^\eps u^2\rangle|\leq
    |\langle u^1,v_t^\eps u^2\rangle|$. On the other hand, if
  $n_1>n_2$ then $\langle u^1,u^2\rangle=0$ and in a similar way to
  proving
  (\ref{tildexest}), we show that
  \begin{equation*}
    \left\langle u^1,v_t^{\eps,(k)}(0)u^2\right\rangle=0
    \quad\mbox{for all}\quad k\in\{1,\dots, n_1-n_2-1\}
  \end{equation*}
  by repeatedly using property~(\ref{cond2sec3}) of an adapted chart.
  This allows us to write
  \begin{equation*}
    \left\langle u^1,\tilde v_t^\eps u^2\right\rangle=
    \left\langle u^1,
      \left(\int_0^1\int_0^{\tau_1}\dots\int_0^{\tau_{n_1-n_2-1}}
        v_t^{\eps,(n_1-n_2)}(\tau_{n_1-n_2})
      \dd\tau_{n_1-n_2}\dd\tau_{n_1-n_2-1}\dots\dd\tau_1\right) u^2
      \right\rangle
  \end{equation*}
  for $n_1>n_2$. As the stochastic process
  $(x_t^\eps(\tau), v_t^\eps(\tau),
      x_t^{\eps,(1)}(\tau),v_t^{\eps,(1)}(\tau),\dots,
      x_t^{\eps,(N)}(\tau),v_t^{\eps,(N)}(\tau))_{t\in[0,1]}$
  is the solution of a stochastic differential equation with graded
  Lipschitz coefficients in the sense of Norris~\cite{jamesSMC}, with
  the coefficient bounds of the graded structure being uniform in
  $\tau\in[0,1]$ and $\eps\in(0,1]$, the second
  result claimed follows.
\end{proof}
We finally present the proof of
Lemma~\ref{lem:controloverinverse}.
For some of the technical arguments which carry over unchanged,
we simply refer the reader to \cite{BMN}.
\begin{proof}[Proof of Lemma~\ref{lem:controloverinverse}]
  Let $(x_t^\eps)_{t\in[0,1]}$ be the process in $\R^d$ and
  $(u_t^\eps)_{t\in[0,1]}$ as well as $(v_t^\eps)_{t\in[0,1]}$ be the
  processes in $\R^d\otimes(\R^d)^*$ which are defined
  as the unique strong solutions of the following
  system of It\^o stochastic differential equations.
  \begin{align}
    \label{SDE4x}
    \db x_t^\eps&=\sum_{i=1}^m\sqrt{\eps} X_i(x_t^\eps)\dd B_t^i
    +\eps{\underline X}_0(x_t^\eps)\dd t\;,\quad x_0^\eps=0\\
    \nonumber
    \db u_t^\eps&=\sum_{i=1}^m\sqrt{\eps}\nabla X_i(x_t^\eps)u_t^\eps\dd B_t^i
    +\eps \nabla{\underline X}_0(x_t^\eps)u_t^\eps\dd t\;,
    \quad u_0^\eps=I\\
    \nonumber
    \db v_t^\eps&=-\sum_{i=1}^m\sqrt{\eps}v_t^\eps\nabla X_i(x_t^\eps)\dd B_t^i
    -\eps v_t^\eps\left(\nabla{\underline X}_0-
      \sum_{i=1}^m(\nabla X_i)^2\right)(x_t^\eps)\dd t\;,
    \quad v_0^\eps=I
  \end{align}
  Fix $k\in\{1,\dots,d\}$. For $\eta\in\R^d$, consider the perturbed
  process $(B_t^{\eta})_{t\in[0,1]}$ in $\R^m$ given by
  \begin{equation*}
    \db B_t^{\eta,i}=\db B_t^i+\eta\left(\sqrt{\eps}\scale_\eps^{-1}
      \left(v_t^\eps X_i(x_t^\eps)\right)\right)^k\dd t\;,
    \quad B_0^\eta=0\;,
  \end{equation*}
  where $(\sqrt{\eps}\scale_\eps^{-1}
    \left(v_t^\eps X_i(x_t^\eps)\right))^k$ denotes the $k$th
  component of the vector $\sqrt{\eps}\scale_\eps^{-1}
  \left(v_t^\eps X_i(x_t^\eps)\right)$ in $\R^d$.
  Write $(x_t^{\eps,\eta})_{t\in[0,1]}$ for
  the strong solution of the stochastic differential equation
  (\ref{SDE4x}) with the driving Brownian motion
  $(B_t)_{t\in[0,1]}$ replaced by $(B_t^{\eta})_{t\in[0,1]}$.
  We choose a version of the family of processes 
  $(x_t^{\eps,\eta})_{t\in[0,1]}$ which is almost surely smooth in $\eta$
  and set
  \begin{equation*}
    \left((x^\eps)'_t\right)^k=
    \left.\frac{\pt}{\pt\eta}\right|_{\eta=0}x_t^{\eps,\eta}\;.
  \end{equation*}
  The derived process
  $((x^\eps)'_t)_{t\in[0,1]}=(\left((x^\eps)'_t\right)^1,
  \dots,\left((x^\eps)'_t\right)^d)_{t\in[0,1]}$ in $\R^d\otimes\R^d$
  associated with the process $(x_t^\eps)_{t\in[0,1]}$
  then satisfies the It\^o stochastic
  differential equation
  \begin{equation*}
    \db (x^\eps)_t'=\sum_{i=1}^m\sqrt{\eps}\nabla
    X_i(x_t^\eps)(x^\eps)_t'\dd B_t^i+\eps\nabla
    {\underline X}_0(x_t^\eps)(x^\eps)_t'\dd t
    +\sum_{i=1}^m\sqrt{\eps}X_i(x_t^\eps)\otimes
    \left(\sqrt{\eps}\scale_{\eps}^{-1}\left(v_t^\eps
        X_i(x_t^\eps)\right)\right)\db t
  \end{equation*}
  subject to $(x^\eps)'_0=0$. Using the expression
  (\ref{eq:rescaledMCM}) for the rescaled Malliavin
  covariance matrix $\tilde c_t^\eps$, we show that
  $(x^\eps)'_t=u_t^\eps\scalemat_\eps \tilde c_t^\eps$.
  It follows that for the derived process
  $((\tilde x^\eps)'_t)_{t\in[0,1]}$ associated with the rescaled process
  $(\tilde x^\eps_t)_{t\in[0,1]}$ and the stochastic process $(\tilde
  u_t^\eps)_{t\in[0,1]}$ given by $\tilde u_t^\eps=\scalemat_\eps^{-1}
  u_t^\eps\scalemat_\eps$, we have
  \begin{equation*}
    (\tilde x^\eps)'_t=\tilde u_t^\eps\tilde c_t^\eps\;.
  \end{equation*}
  Note that both $\tilde u_1^\eps$ and $\tilde c_1^\eps$ are invertible
  for all $\eps>0$ with $(\tilde u_1^\eps)^{-1}=\tilde v_1^\eps$.
  Let $(r_t^\eps)_{t\in[0,1]}$ be the process defined by
  \begin{equation*}
    \db r_t^\eps=\sum_{i=1}^m\sqrt{\eps}\scale_\eps^{-1}
      \left(v_t^\eps X_i(x_t^\eps)\right)\db B_t^i\;,\quad r_0^\eps=0
  \end{equation*}
  and set $y_t^{\eps,(0)}=(x_{t\wedge t_1}^\eps,\dots,x_{t\wedge
    t_k}^\eps,x_t^\eps,v_t^\eps,r_t^\eps,(x^\eps)'_t)$.
  The underlying graded Lipschitz structure, in the sense of
  Norris~\cite{jamesSMC},
  allows us, for $n\geq 0$, to recursively define
  \begin{equation*}
    z_t^{\eps,(n)}=\left(y_t^{\eps,(0)},\dots,y_t^{\eps,(n)}\right)
  \end{equation*}
  by first solving for the derived process
  $((z^{\eps,(n)})'_t)_{t\in[0,1]}$, then writing
  \begin{equation*}
    \left(z^{\eps,(n)}\right)'_t=\left(\left(y^{\eps,(0)}\right)'_t,
      \dots,\left(y^{\eps,(n)}\right)'_t\right)
  \end{equation*}
  and finally setting
  $y_t^{\eps,(n+1)}=(y^{\eps,(n)})'_t$\;.
  
  Consider the random variable $y^\eps=((\tilde x^\eps)'_1)^{-1}$ in
  $(\R^d)^*\otimes (\R^d)^*$ and let
  $\phi=\phi(y^\eps,z_1^{\eps,(n)})$ be a polynomial in $y^\eps$, where
  the coefficients are continuously
  differentiable in $z_1^{\eps,(n)}$ and of polynomial growth, along
  with their derivatives. Going through the deductions made from
  Bismut's integration by parts formula in \cite[Proof of
  Lemma~4.1]{BMN} with $R\equiv 0$ and $F\equiv 0$
  shows that for any continuously
  differentiable, bounded function $f\colon\R^d\to\R$
  with bounded first derivatives
  and any $k\in\{1,\dots,d\}$, we have
  \begin{equation*}
    \E\left[\nabla_k f(\tilde x_1^\eps)
      \phi\left(y^\eps,z_1^{\eps,(n)}\right)\right]=
    \E\left[f(\tilde x_1^\eps)\nabla_k^*\phi
      \left(y^\eps,z_1^{\eps,(n+1)}\right)\right]\;,
  \end{equation*}
  where
  \begin{align*}
    \nabla_k^*\phi\left(y^\eps,z_1^{\eps,(n+1)}\right)
    \qquad\qquad\qquad\qquad\qquad\qquad\quad\\
    =\tau_k\left(y^\eps\otimes r_1^\eps+y^\eps (\tilde x^\eps)''_1
      y^\eps\right)\phi\left(y^\eps,z_1^{\eps,(n)}\right)
    &+\tau_k\left(y^\eps\otimes\left(\nabla_y
        \phi\left(y^\eps,z_1^{\eps,(n)}\right)
        y^\eps (\tilde x^\eps)''_1 y^\eps\right)\right)\\
    &-\tau_k\left(y^\eps\otimes\left(\nabla_z
        \phi\left(y^\eps,z_1^{\eps,(n)}\right)
        \left(z^{\eps,(n)}\right)'_1\right)\right)\;,
  \end{align*}
  and $\tau_k\colon(\R^d)^*\otimes (\R^d)^*\otimes\R^d\to\R$ is the
  linear map given by
  $\tau_k(e_l^*\otimes e_{k'}^*\otimes e_{l'})
   =\delta_{k k'}\delta_{l l'}\,.$
  Starting from
  \begin{equation*}
    \phi\left(y^\eps,z_1^{\eps,(0)}\right)=G(\tilde x^\eps)
    =g\left(\tilde x_{t_1}^\eps,\dots,\tilde x_{t_k}^\eps\right)
  \end{equation*}
  we see inductively that, for any multi-index
  $\alpha=(k_1,\dots,k_n)$,
  \begin{equation*}
    \E\left[\nabla^\alpha f(\tilde x_1^\eps)G(\tilde x^\eps)\right]=
    \E\left[f(\tilde x_1^\eps)(\nabla^*)^\alpha G
      \left(y^\eps,z_1^{\eps,(n)}\right)\right]\;.
  \end{equation*}
  Fixing $\xi\in\R^d$ and
  choosing $f(\cdot)=\e^{\im\langle\xi,\cdot\rangle}$ in this
  integration by parts formula yields
  \begin{equation*}
    |\xi^\alpha||\hat G_\eps(\xi)|\leq
    \E\left[\left|(\nabla^*)^\alpha G\left(y^\eps,z_1^{\eps,(n)}
        \right)\right|\right]\;.
  \end{equation*}
  In order to deduce the bound (\ref{eq:generalGbound}), it remains to
  establish that $C_\eps(\alpha,G)=\E[|(\nabla^*)^\alpha
  G(y^\eps,z_1^{\eps,(n)})|]$ can be controlled uniformly in
  $\eps$. Due to $y^\eps=(\tilde c_1^\eps)^{-1} \tilde v_1^\eps$,
  Theorem~\ref{thm:UND} and the second estimate from
  Lemma~\ref{lem:tildeuvest} immediately imply that,
  for all $p<\infty$,
  \begin{equation}\label{ymoments}
    \sup_{\eps\in(0,1]}\E\left[\left|y^\eps\right|^p\right]<\infty\;.
  \end{equation}
  Moreover, from the first moment estimate in
  Lemma~\ref{lem:tildeuvest}, it follows that all processes derived from
  the rescaled process
  $(\tilde x_t^\eps)_{t\in[0,1]}$ have moments of all orders
  bounded uniformly in $\eps\in(0,1]$.
  Similarly, for $n=d+1$ and all $p<\infty$, we obtain
  \begin{equation}\label{zmoments}
    \sup_{\eps\in(0,1]}\E\left[\left|z_1^{\eps,(n)}\right|^p\right]<\infty\;,
  \end{equation}
  where we observe that, for all $n\in\{0,1,\dots,N-1\}$ and all
  $u\in C_{n+1}(0)\cap C_n(0)^\perp$,
  \begin{equation*}
    \left\langle u,r_t^\eps\right\rangle=
    \sum_{i=1}^m\int_0^t\left\langle u,
      \eps^{-n/2}v_s^\eps X_i(x_s^\eps)\right\rangle\dd B_s^i\;,
  \end{equation*}
  and use Lemma~\ref{lem:leadterm} to show that there is no
  singularity in the process $(r_t^\eps)_{t\in[0,1]}$ as $\eps\to 0$.
  Since $(\nabla^*)^\alpha G$ is of polynomial growth in
  the argument
  $(y^\eps,z_1^{\eps,(n)})$, the moment
  estimates (\ref{ymoments}) and (\ref{zmoments})
  show that
  $C_\eps(\alpha,G)$ is bounded uniformly in $\eps\in(0,1]$.
  This establishes (\ref{eq:generalGbound}).

  Finally, the same proof as presented in \cite[Proof of
  Lemma~4.1]{BMN} shows that we
  have (\ref{eq:4bound}) in the special case
  where $G(v)=|v_{t_1}-v_{t_2}|^4$ for some $t_1,t_2\in(0,1)$.
  Let the process
  $(\tilde x_t^{\eps,(0)})_{t\in[0,1]}$ be given by
  $\tilde x_t^{\eps,(0)}=\tilde x_t^\eps$ and, recursively for $n\geq
  0$, define $(\tilde x_t^{\eps,(n+1)})_{t\in[0,1]}$ by
  $\tilde x_t^{\eps,(n+1)}=(\tilde x_t^\eps,(\tilde x^{\eps,(n)})'_t)$.
  Then, for all $p\in[1,\infty)$, there exists a constant
  $D(p)<\infty$ such that,
  uniformly in $t_1,t_2\in(0,1)$ and in $\eps\in(0,1]$,
  \begin{equation*}
    \E\left[\left|\tilde x_{t_1}^{\eps,(n)}-
                  \tilde x_{t_2}^{\eps,(n)}\right|^{4p}\right]
              \leq D(p)|t_1-t_2|^{2p}\;.
  \end{equation*}
  Furthermore, from the expression for the adjoint operator
  $\nabla_k^*$ we deduce
  that, for all $n\geq 1$ and any multi-index
  $\alpha=(k_1,\dots,k_n)$, there exists a random variable $M_\alpha$,
  with moments of all orders which are bounded uniformly in
  $\eps\in(0,1]$, such that
  \begin{equation*}
    \left(\nabla^*\right)^\alpha G\left(y^\eps,z_1^{\eps,(n)}\right) =
    M_\alpha\left|\tilde x_{t_1}^{\eps,(n)}-\tilde x_{t_2}^{\eps,(n)}\right|^{4}\;.
  \end{equation*}
  By using H\"older's inequality, we conclude that there exists a
  constant $C(\alpha)<\infty$ such that, uniformly in
  $t_1,t_2\in(0,1)$ and $\eps\in(0,1]$, we obtain
  \begin{equation*}
    C_\eps(\alpha,G)\leq C(\alpha)|t_1-t_2|^2\;,
  \end{equation*}
  which implies (\ref{eq:4bound}).
\end{proof}

\section{Localisation argument}\label{pp:local}
In proving Theorem~\ref{thm:convofbridges} by localising
Theorem~\ref{propn:convofbridges}, we use the same
localisation argument as presented in \cite[Section~5]{BMN}. This is
possible due to \cite[Theorem~6.1]{BMN}, which provides a control over
the amount of heat diffusing between two fixed points without leaving
a fixed closed subset, also covering the diagonal case.
After the proof, we give an example to illustrate
Theorem~\ref{thm:convofbridges} 
and we remark on deductions made for
the $\sqrt{\eps}$-rescaled fluctuations of diffusion loops.

Let $\lo$ be a differential operator on $M$
satisfying the conditions of Theorem~\ref{thm:convofbridges}
and let $(X_1,\dots,X_m)$ be
a sub-Riemannian structure for the diffusivity of $\lo$. Define
$X_0$ to be the smooth vector field on $M$ given by requiring
\begin{equation*}
  \lo=\frac{1}{2}\sum_{i=1}^m X_i^2+X_0
\end{equation*}
and recall that
$X_0(y)\in\operatorname{span}\{X_1(y),\dots,X_m(y)\}$
for all $y\in M$.
Let $(U_0,\theta)$ be an adapted chart to the filtration induced by
$(X_1,\dots,X_m)$ at $x\in M$ and extend it to a smooth map
$\theta\colon M\to\R^d$. By passing to a smaller set if necessary, we
may assume that the closure of $U_0$ is compact. Let $U$ be a domain
in $M$ containing $x$ and compactly contained in $U_0$.
We start by
constructing a
differential operator $\bar\lo$ on $\R^d$ which satisfies the
assumptions of Theorem~\ref{propn:convofbridges}
with the identity map being an adapted chart at $0$ and such that
$\lo(f)=\bar\lo(f\circ\theta^{-1})\circ\theta$ for all
$f\in C^\infty(U)$.

Set $V=\theta(U)$ and $V_0=\theta(U_0)$. Let $\chi$ be a smooth
function on $\R^d$ which satisfies $\ind_V\leq\chi\leq \ind$ and
where $\{\chi>0\}$ is compactly contained in
$V_0$. The existence of such a function is always guaranteed.
Besides, we pick another smooth function $\rho$ on $\R^d$ with
$\ind_{V}\leq \ind-\rho\leq\ind_{V_0}$ and such that
$\chi+\rho$ is everywhere positive.
Define vector fields
$\bar X_0,\bar X_1,\dots,\bar X_m,\bar X_{m+1},\dots,\bar X_{m+d}$
on $\R^d$ by
\begin{align*}
  \bar X_i(z)&=
  \begin{cases}
    \chi(z)\dd\theta_{\theta^{-1}(z)}
    \left(X_i\left(\theta^{-1}(z)\right)\right) & \mbox{if }z\in V_0\\
    0 & \mbox{if }z\in\R^d\setminus V_0
  \end{cases}
  &&\mbox{for } i\in\{0,1,\dots,m\}\;,\\
  \bar X_{m+k}(z)&=\rho(z)e_k &&\mbox{for } k\in\{1,\dots,d\}\;,
\end{align*}
where $e_1,\dots,e_d$ is the standard basis in $\R^d$.
We note that
$X_0(y)\in\operatorname{span}\{X_1(y),\dots,X_m(y)\}$
for all $y\in M$ implies that 
$\bar X_0(z)\in\operatorname{span}\{\bar X_1(z),\dots,\bar X_m(z)\}$
holds for all $z\in\R^d$. Moreover, the vector fields
$\bar X_1,\dots,\bar X_m$ satisfy the strong H\"ormander
condition on $\{\chi>0\}$, while
$\bar X_{m+1},\dots,\bar X_{m+d}$ themselves
span $\R^d$ on $\{\rho>0\}$. As $U_0$ is assumed to have compact
closure, the vector fields constructed are
all bounded with bounded derivatives of all orders.
Hence, the differential operator $\bar\lo$ on $\R^d$ given by
\begin{equation*}
  \bar\lo=\frac{1}{2}\sum_{i=1}^{m+d} \bar X_i^2 +\bar X_0
\end{equation*}
satisfies the assumptions of Theorem~\ref{propn:convofbridges}.
We further observe that, on $V$,
\begin{equation*}
  \bar X_i = \theta_*(X_i)\quad\mbox{for all }
  i\in\{0,1,\dots,m\}\;,
\end{equation*}
which yields the the desired property that
$\bar\lo=\theta_*\lo$ on $V$.
Additionally, we see that the nilpotent approximations of
$(\bar X_1,\dots,\bar X_m,\bar X_{m+1},\dots,\bar X_{m+d})$
are $(\tilde X_1,\dots,\tilde X_m,0,\dots,0)$ which shows that the
limiting rescaled processes on $\R^d$
associated to the processes with generator
$\eps\bar\lo$ and $\eps\lo$, respectively, have the same generator
$\tilde\lo$.
Since $(U_0,\theta)$, and in particular the restriction
$(U,\theta)$ is an adapted chart at $x$, it also follows that
the identity map on $\R^d$ is an adapted chart to the
filtration induced by the sub-Riemannian structure
$(\bar X_1,\dots,\bar X_m,\bar X_{m+1},\dots,\bar X_{m+d})$ on $\R^d$
at $0$.
Thus, Theorem~\ref{propn:convofbridges} holds with the identity map
as the global diffeomorphism
and we associate the same anisotropic dilation
$\delta_\eps\colon\R^d\to\R^d$ with the adapted charts
$(U,\theta)$ at $x$ and $(V,I)$ at $0$.
We use this to finally prove our main result.
\begin{proof}[Proof of Theorem~\ref{thm:convofbridges}]
  Let $\bar p$ be the Dirichlet heat kernel for $\bar\lo$ with respect
  to the Lebesgue measure $\lambda$ on $\R^d$. Choose a positive
  smooth measure $\nu$ on $M$ which satisfies
  $\nu=(\theta^{-1})_*\lambda$ on $U$ and let $p$ denote the Dirichlet
  heat kernel for $\lo$ with respect to $\nu$.
  Write $\mu_\eps^{0,0,\R^d}$ for the diffusion loop measure on
  $\Omega^{0,0}(\R^d)$ associated with the operator $\eps\bar\lo$
  and write $\tilde\mu_\eps^{0,0,\R^d}$ for the rescaled loop measure
  on $T\Omega^{0,0}(\R^d)$, which is the image measure of
  $\mu_\eps^{0,0,\R^d}$ under the scaling map
  $\bar\sigma_\eps\colon\Omega^{0,0}(\R^d)\to T\Omega^{0,0}(\R^d)$
  given by
  \begin{equation*}
    \bar\sigma_\eps(\omega)_t=\delta_\eps^{-1}\left(\omega_t\right)\;.
  \end{equation*}
  Moreover, let $\tilde\mu^{0,0,\R^d}$ be the loop measure
  on $T\Omega^{0,0}(\R^d)$ associated with the
  stochastic process $(\tilde x_t)_{t\in[0,1]}$ on $\R^d$
  starting from $0$ and
  having generator $\tilde\lo$ and let $\bar q$ denote the probability
  density function of $\tilde x_1$.
  From Theorem~\ref{propn:convofbridges}, we know that
  $\tilde\mu_\eps^{0,0,\R^d}$ converges weakly to
  $\tilde\mu^{0,0,\R^d}$ on $T\Omega^{0,0}(\R^d)$ as $\eps\to 0$, and
  its proof also shows that
  \begin{equation}\label{heatest}
    \bar p(\eps,0,0)=\eps^{-Q/2}\bar q(0)(1+o(1))
    \quad\mbox{as}\quad\eps\to 0\;.
  \end{equation}
  Let $p_U$ denote the Dirichlet heat kernel in $U$ of the
  restriction of $\lo$ to $U$ and write $\mu_\eps^{x,x,U}$ for the
  diffusion bridge measure on $\Omega^{x,x}(U)$ associated with the
  restriction of the operator $\eps\lo$ to $U$.
  For any measurable set
  $A\subset\Omega^{x,x}(M)$, we have
  \begin{equation}\label{locest1}
    p(\eps,x,x)\mu_\eps^{x,x}(A) =
    p_U(\eps,x,x)\mu_\eps^{x,x,U}(A\cap\Omega^{x,x}(U))+
    p(\eps,x,x)\mu_\eps^{x,x}(A\setminus\Omega^{x,x}(U))\;.
  \end{equation}
  Additionally, by counting paths and since
  $\nu=(\theta^{-1})_*\lambda$ on $U$, we obtain
  \begin{equation}\label{locest2}
    \bar p(\eps,0,0)
    \mu_\eps^{0,0,\R^d}\left(\theta(A\cap\Omega^{x,x}(U))\right)
    = p_U(\eps,x,x)
      \mu_\eps^{x,x,U}\left(A\cap\Omega^{x,x}(U)\right)\;,
  \end{equation}
  where $\theta(A\cap\Omega^{x,x}(U))$ denotes the subset
  $\{(\theta(\omega_t))_{t\in[0,1]}\colon\omega\in
  A\cap\Omega^{x,x}(U)\}$ of $\Omega^{0,0}(\R^d)$.
  Let $B$ be a
  bounded measurable subset of the set $T\Omega^{x,x}(M)$ of
  continuous paths $v=(v_t)_{t\in[0,1]}$ in $T_xM$ with $v_0=0$ and
  $v_1=0$.
  For $\eps>0$ sufficiently small, we have
  $\sigma_\eps^{-1}(B)\subset\Omega^{x,x}(U)$ and so
  (\ref{locest1}) and (\ref{locest2}) imply that
  \begin{equation*}
    p(\eps,x,x)\mu_\eps^{x,x}\left(\sigma_\eps^{-1}(B)\right)=
    \bar p(\eps,0,0)
    \mu_\eps^{0,0,\R^d}\left(\theta\left(\sigma_\eps^{-1}(B)\right)\right)\;.
  \end{equation*}
  Since $\mu_\eps^{x,x}(\sigma_\eps^{-1}(B))=\tilde\mu_\eps^{x,x}(B)$
  and
  \begin{equation*}
    \mu_\eps^{0,0,\R^d}\left(\theta\left(\sigma_\eps^{-1}(B)\right)\right)=
    \mu_\eps^{0,0,\R^d}\left(\bar\sigma_\eps^{-1}(\db\theta_x(B))\right)=
    \tilde\mu_\eps^{0,0,\R^d}(\db\theta_x(B))\;,
  \end{equation*}
  we established that
  \begin{equation}\label{rescaleest}
    p(\eps,x,x)\tilde\mu_\eps^{x,x}(B)=
    \bar p(\eps,0,0)\tilde\mu_\eps^{0,0,\R^d}(\db\theta_x(B))\;.
  \end{equation}
  Moreover, it holds true that
  $\mu_\eps^{0,0,\R^d}(\theta(\Omega^{x,x}(U))\to 1$ as $\eps\to 0$.
  Therefore, taking $A=\Omega^{x,x}(M)$ in
  (\ref{locest2}) and using (\ref{heatest}) gives
  \begin{equation*}
    p_U(\eps,x,x)=\eps^{-Q/2}\bar q(0)(1+o(1))
    \quad\mbox{as}\quad\eps\to 0\;.
  \end{equation*}
  By \cite[Theorem~6.1]{BMN}, we know that
  \begin{equation*}
    \limsup_{\eps\to 0}\eps\log p(\eps,x,M\setminus U,x)
    \leq -\frac{d(x,M\setminus U,x)^2}{2}\;,
  \end{equation*}
  where $p(\eps,x,M\setminus U,x)=p(\eps,x,x)-p_U(\eps,x,x)$ and
  $d(x,M\setminus U,x)$ is the sub-Riemannian distance from $x$ to $x$
  through $M\setminus U$.
  Since $d(x,M\setminus U,x)$ is strictly
  positive, it follows that
  \begin{equation*}
    p(\eps,x,x)=p_U(\eps,x,x)+p(\eps,x,M\setminus U,x)=
    \eps^{-Q/2}\bar q(0)(1+o(1))
    \quad\mbox{as}\quad\eps\to 0\;.
  \end{equation*}
  Hence, due to (\ref{rescaleest}), we have
  $\tilde\mu_\eps^{x,x}(B)=\tilde\mu_\eps^{0,0,\R^d}(\db\theta_x(B))(1+o(1))$
  for any bounded measurable set $B\subset T\Omega^{x,x}(M)$.
  From the weak convergence of $\tilde\mu_\eps^{0,0,\R^d}$ to
  $\tilde\mu^{0,0,\R^d}$ on $T\Omega^{0,0}(\R^d)$ as $\eps\to 0$ and
  since $\tilde\mu^{0,0,\R^d}(\db\theta_x(B))=\tilde\mu^{x,x}(B)$, we
  conclude that the diffusion loop measures
  $\tilde\mu_\eps^{x,x}$ converge weakly to the loop measure
  $\tilde\mu^{x,x}$ on $T\Omega^{0,0}(M)$ as $\eps\to 0$.
\end{proof}
We close with an example and a remark.
\begin{eg0}\label{example2}\rm
  Consider the same setup as in Example~\ref{example},
  i.e. $M=\R^2$ with $x=0$ fixed and the vector fields
  $X_1, X_2$ on $\R^2$ defined by
  \begin{equation*}
    X_1=\frac{\pt}{\pt x^1}+x^1\frac{\pt}{\pt x^2}
    \qquad\mbox{and}\qquad X_2=x^1\frac{\pt}{\pt x^1}
  \end{equation*}
  in Cartesian coordinates $(x^1,x^2)$.
  We recall that these
  coordinates are not adapted to the filtration induced by
  $(X_1,X_2)$ at $0$
  and we start off by illustrating why this chart is not
  suitable for our analysis. The unique strong solution
  $(x_t^\eps)_{t\in[0,1]}=(x_t^{\eps,1},x_t^{\eps,2})_{t\in[0,1]}$
  of the Stratonovich stochastic differential equation in $\R^2$
  \begin{align*}
    \stb x_t^{\eps,1} &= \sqrt{\eps}\std B_t^1+
                       \sqrt{\eps}x_t^{\eps,1}\std B_t^2\\
    \stb x_t^{\eps,2} &= \sqrt{\eps}x_t^{\eps,1}\std B_t^1
  \end{align*}
  subject to $x_0^\eps=0$ is given by
  \begin{equation*}
    x_t^\eps=
    \left(\sqrt{\eps}\int_0^t\e^{\sqrt{\eps}\left(B_t^2-B_s^2\right)}\std B_s^1,
      \eps\int_0^t\left(
      \int_0^s\e^{\sqrt{\eps}\left(B_s^2-B_r^2\right)}\std
      B_r^1\right)\std B_s^1\right)\;.
  \end{equation*}
  Even though the step of the filtration induced by $(X_1,X_2)$ at
  $0$ is $N=3$, rescaling the stochastic
  process $(x_t^\eps)_{t\in[0,1]}$ by $\eps^{-3/2}$ in any direction
  leads to a blow-up in the limit $\eps\to 0$. Instead, the
  highest-order rescaled process we can consider is
  $(\eps^{-1/2}x_t^{\eps,1},\eps^{-1}x_t^{\eps,2})_{t\in[0,1]}$ whose
  limiting process, as $\eps\to 0$, is characterised by
  \begin{equation*}
    \lim_{\eps\to 0}\left(\eps^{-1/2}x_t^{\eps,1},\eps^{-1}x_t^{\eps,2}\right)=
    \left(B_t^1,\frac{1}{2}\left(B_t^1\right)^2\right)\;.
  \end{equation*}
  Thus, these rescaled processes localise around a parabola in $\R^2$.
  As the Malliavin covariance matrix of
  $(B_1^1,\frac{1}{2}(B_1^1)^2)$ is degenerate, the Fourier
  transform argument from Section~\ref{sec:convofdiffmeas}
  cannot be used.
  Rather, we
  first need to apply an additional rescaling along the
  parabola to recover a non-degenerate limiting process.
  This is the reason why we choose to
  work in an adapted chart because it allows
  us to express the overall rescaling needed
  as an anisotropic dilation.

  Let $\theta\colon\R^2\to\R^2$ be the same global adapted chart as
  used in Example~\ref{example} and let
  $\scale_\eps\colon\R^2\to\R^2$ be the
  associated anisotropic dilation. We showed that the nilpotent
  approximations $\tilde X_1, \tilde X_2$ of the vector fields
  $X_1, X_2$ are
  \begin{equation*}
    \tilde X_1=\frac{\pt}{\pt y^1}
    \qquad\mbox{and}\qquad
    \tilde X_2=-\left(y^1\right)^2\frac{\pt}{\pt y^2}\;,
  \end{equation*}
  with respect to Cartesian coordinates $(y^1,y^2)$ on the second copy
  of $\R^2$. From the convergence result (\ref{convergence}), it
  follows that, for all $t\in[0,1]$,
  \begin{equation*}
    \scale_\eps^{-1}\left(\theta(x_t^\eps)\right)\to
    \left(B_t^1,-\int_0^t\left(B_s^1\right)^2\std B_s^2\right)
    \quad\mbox{as}\quad\eps\to 0\;.
  \end{equation*}
  Since $\db\theta_0\colon\R^2\to\R^2$ is the identity,
  Theorem~\ref{thm:convofbridges} says that the suitably rescaled
  fluctuations of the diffusion loop at $0$ associated to the
  stochastic process with generator $\lo=\frac{1}{2}(X_1^2+X_2^2)$
  converge weakly to the loop obtained by conditioning
  $(B_t^1,-\int_0^t(B_s^1)^2\std B_s^2)_{t\in[0,1]}$
  to return to $0$ at time $1$.
\end{eg0}
\begin{remark}
  We show that Theorem~\ref{thm:convofbridges} and
  Theorem~\ref{propn:convofbridges}
  allow us to make deductions about
  the $\sqrt{\eps}$-rescaled fluctuations of diffusion loops.
  For the rescaling map $\tau_\eps\colon\Omega^{x,x}\to T\Omega^{0,0}$
  given by
  \begin{equation*}
    \tau_\eps(\omega)_t=(\db\theta_x)^{-1}
  \left(\eps^{-1/2}\theta(\omega_t)\right)\;,
  \end{equation*}
  we are interested in the behaviour of the measures
  $\mu_\eps^{x,x}\circ\tau_\eps^{-1}$ in the limit $\eps\to 0$.
  Let $e_1,\dots,e_d$ be the standard basis in $\R^d$
  and define
  $\psi\colon T\Omega^{0,0}\to T\Omega^{0,0}$ by
  \begin{equation*}
    \psi(v)_t=\sum_{i=1}^{d_1}
    \left\langle\db\theta_x(v_t),e_i\right\rangle
    \left(\db\theta_x\right)^{-1}e_i\;.
  \end{equation*}
  The map $\psi$ takes a path in $T\Omega^{0,0}$ and projects it onto
  the component living in the subspace $C_1(x)$ of $T_xM$.
  Since the maps $\tau_\eps$ and $\sigma_\eps$ are related by
  \begin{equation*}
    \tau_\eps(\omega)_t=\left(\db\theta_x\right)^{-1}
    \left(\eps^{-1/2}\delta_\eps \left(\db\theta_x\left(
          \sigma_\eps(\omega)_t
        \right)\right)\right)
  \end{equation*}
  and because $\eps^{-1/2}\delta_\eps(y)$ tends to
  $(y^1,\dots,y^{d_1},0,\dots,0)$ as $\eps\to 0$,
  it follows that the $\sqrt{\eps}$-rescaled diffusion loop measures
  $\mu_\eps^{x,x}\circ\tau_\eps^{-1}$ converge weakly to
  $\tilde\mu^{x,x}\circ \psi^{-1}$ on $T\Omega^{0,0}$ as $\eps\to
  0$. Provided $\lo$ is non-elliptic at $x$,
  the latter is a degenerate measure which is
  supported on the set of paths $(v_t)_{t\in[0,1]}$ in $T\Omega^{0,0}$
  which satisfy $v_t\in C_1(x)$, for all $t\in[0,1]$.
  Hence, the rescaled diffusion process
  $(\eps^{-1/2}\theta(x_t^\eps))_{t\in[0,1]}$
  conditioned by $\theta(x_1^\eps)=0$ localises around the subspace
  $(\theta_*C_1)(0)$.

  Finally, by considering the limiting diffusion loop
  from Example~\ref{example2},
  we demonstrate that the degenerate limiting measure
  $\tilde\mu^{x,x}\circ \psi^{-1}$ need not be Gaussian.
  Going back to Example~\ref{example2}, we first observe that the map
  $\psi$ is simply projection onto the first
  component, i.e.
  \begin{equation*}
    \psi(v)_t=
    \begin{pmatrix}
      1 & 0 \\
      0 & 0
    \end{pmatrix} v_t\;.
  \end{equation*}
  Thus, to show that the measure $\tilde\mu^{x,x}\circ \psi^{-1}$ is
  not Gaussian,
  we have to analyse the process
  $(B_t^1,-\int_0^t(B_s^1)^2\std B_s^2)_{t\in[0,1]}$
  conditioned to return to $0$ at time $1$ and show that its
  first component is not Gaussian.
  Using the tower property, we first condition on
  $B_1^1=0$ to see that this component is equal in law to the
  process $(B_t^1-tB_1^1)_{t\in[0,1]}$ conditioned by
  $\int_0^1(B_s^1-sB_1^1)^2\std B_s^2=0$,
  where the latter is in fact equivalent to
  conditioning on $\int_0^1(B_s^1-sB_1^1)^2\dd B_s^2=0$. Let $\mu_{B}$
  denote the Brownian bridge measure on
  $\Omega(\R)^{0,0}=\{\omega\in
  C([0,1],\R)\colon\omega_0=\omega_1=0\}$
  and let $\nu$ be the law of $-\int_0^1(B_s^1-sB_1^1)^2\dd B_s^2$
  on $\R$. Furthermore, denote the joint law of
  \begin{equation*}
    \left(B_t^1-tB_1^1\right)_{t\in[0,1]}
    \qquad\mbox{and}\qquad
    -\int_0^1\left(B_s^1-sB_1^1\right)^2\dd B_s^2
  \end{equation*}
  on $\Omega(\R)^{0,0}\times\R$ by $\mu$.
  Since $-\int_0^1\omega_s^2\dd B_s^2$, for
  $\omega\in\Omega(\R)^{0,0}$ fixed, is a normal random variable with
  mean zero and variance $\int_0^1\omega_s^4\dd s$, we obtain that
  \begin{equation}\label{split}
    \mu(\db\omega,\db y)=
    \frac{1}{\sqrt{2\pi}\sigma(\omega)}
    \e^{-\frac{y^2}{2\sigma^2(\omega)}}\mu_B(\db\omega)\dd y
    \quad\mbox{with}\quad
    \sigma(\omega)=\left(\int_0^1\omega_s^4\dd s\right)^{1/2}\;.
  \end{equation}
  On the other hand, we can disintegrate $\mu$ as
  \begin{equation*}
    \mu(\db\omega,\db y)=\mu_B^y(\db\omega)\nu(\db y)\;,
  \end{equation*}
  where $\mu_B^y$ is the law of
  $(B_t^1-tB_1^1)_{t\in[0,1]}$ conditioned by
  $-\int_0^1(B_s^1-sB_1^1)^2\dd B_s^2=y$, i.e. we are interested in
  the measure $\mu_B^0$. From (\ref{split}), it follows that
  \begin{equation*}
    \mu_B^0(\db\omega)\propto\sigma^{-1}(\omega)\mu_B(\db\omega)=
    \left(\int_0^1\omega_s^4\dd s\right)^{-1/2}\mu_B(\db\omega)\;.
  \end{equation*}
  This shows that $\mu_B^0$ is not Gaussian, which implies that the
  $\sqrt{\eps}$-rescaled fluctuations indeed admit
  a non-Gaussian limiting
  diffusion loop.
\end{remark}

\bibliographystyle{plain}
\bibliography{books}

\end{document}